\newtheorem{theorem}{Theorem}[section]
\newtheorem{definition}{Definition}[section]
\newtheorem{proposition}{Proposition}[section]
\newtheorem{corollary}{Corollary}[section]
\newtheorem{lemma}{Lemma}[section]
\newcommand{\cR}{\mathcal{R}}
\newcommand{\cS}{\mathcal{S}}
\newcommand{\cD}{\mathcal{D}}
\newcommand{\cC}{\mathcal{C}}
\newcommand{\cF}{\mathcal{F}}
\newcommand{\cP}{\mathcal{P}}
\newcommand{\cL}{\mathcal{L}}
\newcommand{\cT}{\mathcal{T}}
\newcommand{\cG}{\mathcal{G}}
\newcommand{\Pro}{\mathsf{P}}
\newcommand{\Exp}{\mathsf{E}}
\newcommand{\bN}{\mathbb{N}}
\newcommand{\bS}{\mathbb{S}}
\begin{document}

\title{Ordering sampling rules for sequential anomaly identification under sampling constraints
 %\thanks{Author  is  supported }
}

\author{Aristomenis Tsopelakos, Georgios Fellouris
%\thanks{Aristomenis Tsopelakos, Georgios Fellouris are with the Coordinated Science Laboratory, University of Illinois at Urbana-Champaign.}
}

%\markboth{IEEE Transactions on Information Theory,~Vol.~~, No.~~, ~~~Year (submitted)}%
%{Author: Title}

\maketitle

\begin{abstract}
We consider the problem of sequential anomaly identification over multiple independent data streams, under the presence of a sampling constraint. The goal is to quickly identify those that  exhibit anomalous statistical behavior, when it is not possible to sample every source at each time instant. Thus, in addition to a stopping rule that  determines when to stop sampling, and a decision rule that indicates which sources to identify as anomalous upon stopping, one needs to specify a sampling rule that  determines which sources to sample at each time instant. We focus on the family of ordering sampling rules that select the sources to be sampled at each time instant based not only on the currently estimated subset of anomalous sources as the probabilistic sampling rules \cite{Tsopela_2022}, but also on the ordering of the sources' test-statistics. We show that under an appropriate design specified explicitly, an ordering  sampling rule leads to the optimal expected time for stopping among all policies that satisfy the same sampling and error constraints to a first-order asymptotic approximation as the false positive and false negative error thresholds go to zero. This is the first asymptotic optimality result for ordering sampling rules, when more than one sources can be sampled per time instant, and it is established under a general setup where the number of anomalous sources is not required to be known. A novel proof technique is introduced that encompasses all different cases of the problem concerning sources' homogeneity, and prior information on the number of anomalies. Simulations show that ordering sampling rules have better performance in finite regime compared to probabilistic sampling rules.
\end{abstract}

\begin{IEEEkeywords}
Anomaly identification, multiple hypothesis testing, asymptotic optimality, ordering sampling rules.
\end{IEEEkeywords}

\section{Introduction}\label{sec:Intro} 
In many scientific and engineering problems with numerous data streams, it is  important to be able to  quickly identify those that  exhibit outlying statistical behavior.  For example, in navigation system integrity monitoring, it is critical to quickly identify a faulty sensor  
in order to remove it from the navigation system \cite[Section 11.1]{tartakovsky_book_2014}. 
For rapid intrusion detection in computer networks, anomaly-based detection systems are trained to recognize standard network behavior, detect deviations from the standard profile in real time, and identify those  deviations that can be  classified as potential network attacks \cite{tart1}. In  brain science, it is desirable to identify groups of cells with large vibration frequency, as this might be a symptom of a developing  malfunction \cite{Brain_2010}. Such applications motivate the study of \textit{sequential multiple testing} problems in which (i) there are multiple data  sequences generated by  distinct data sources, (ii) two hypotheses  are postulated for each of them, and (iii) the goal is to identify as quickly as possible  those data streams in which the alternative hypotheses hold, and which are often interpreted as  ``anomalous''. The data streams may be  observed continuously until a decision is reached for each (see, e.g.,  \cite{Bartroff_and_Song_2014,De_and_Baron_2012b,Song_and_Fellouris_2016,Song_and_Fellouris_2019,Bartroff_and_Song_2020,He_and_Bartroff_2021}), or there may be a constraint to sample only a fixed number of data  sources at each time instant (see, e.g., 
\cite{oddball_2018,Cohen2015active,huang2017active,
	Cohen2019nonlinearcost,Cohen2020composite,
	Tsopela_2019,Tsopela_2020,Cohen_hier,Cohen2023comp_hier,Prabhu2022}).
In the latter case, the  problem can be formulated as a sequential multi-hypothesis testing, where  at each time instant the action that influences the distribution of the observations is the  choice of sources to be sampled.  Thus, methods and  results from the literature of sequential design of experiments, or  sequential multi-hypothesis testing  \text{with controlled sensing} \cite{chernoff1959,albert1961,Bessler1960_I,Bessler1960_II,Kiefer_Sacks_1963,Keener_1984, Lalley_Lorden_1986,nitinawarat_controlled_2013,nitinawarat_controlled_2015,Aditya_2021} are applicable.

A weaker sampling constraint was proposed in \cite{Tsopela_2022}, according to which the number of sampled sources per time instant is not necessarily constant over time. In the same work it was shown that when using the stopping and decision rules proposed in \cite{Song_and_Fellouris_2016} for the full sampling case, the optimal expected time for stopping is achieved asymptotically if the long-run sampling frequency of each source is not smaller than a critical value that depends on the unknown subset of anomalous sources. Moreover, this criterion was shown to be satisfied simultaneously for every possible subset of anomalous sources by a \textit{probabilistic} sampling rule, according to which each source is sampled at each time instant with a probability greater than or equal to the aforementioned critical value.

The focus of the present paper is on a different sampling approach, which goes back to \cite[Remark 5]{chernoff1959}. Specifically, we consider a family of sampling rules that choose the sources to be sampled at each instant based not only on the currently estimated anomalous subset as the probabilistic sampling rules \cite{Tsopela_2022}, but also on the ordering of the sources' test-statistics, prioritizing those with the least evidence which in many cases are the ones with small absolute value. For this, we refer to them as  \textit{ordering} sampling rules. Such  sampling rules have been considered in the literature mainly in the case that \textit{the number of anomalous sources is known a priori}, and have been shown in simulation studies to be more  efficient than  probabilistic sampling rules (see e.g., \cite{Cohen2015active,huang2017active,Tsopela_2019}). Intuitively, this is because the ordering rules collect samples from the sources of least evidence, whereas the probabilistic rules assign a sampling probability to almost all sources. On the other hand, theoretical analysis for ordering sampling rules has been limited. A second-order asymptotic optimality analysis has been  conducted in \cite{Lalley_Lorden_1986} in the context of a  general controlled sensing problem. When translated to our  framework, this analysis requires that  a \textit{single} source can be  sampled at each time instant, and that it is a priori known that there is a \textit{single}  anomalous source.   Under the same setup, a first-order asymptotic analysis for the sequential anomaly identification problem has been conducted  when the testing problems in all sources are identical (homogeneous setup) in \cite{Cohen2015active,Cohen2020composite}, and under a specific non-homogeneous setup  in  \cite{huang2017active}.

In the present work we consider a general setup where \text{neither} the number of anomalies is required to be  a priori known, \text{nor} it is assumed that only one source can be sampled at each time instant. Specifically, as in \cite{Tsopela_2022}, (i) we do not make any  homogeneity assumption regarding the data sources, (ii) we  incorporate arbitrary lower and upper bounds on the  number of anomalous sources, (iii) we control arbitrary, distinct familywise error probabilities, (iv) we allow for an arbitrary upper bound on the conditional expected number of sampled sources given the past observations per time instant. Our main contribution in this work is that we establish the first-order asymptotic optimality of an ordering sampling rule in this general setup. Specifically, we show that with an appropriate design that is specified explicitly  an ordering sampling rule guarantees, under any possible unknown subset of anomalous sources, that the long-run sampling frequency of each source is equal to  or even larger than the critical value required for asymptotic optimality. To the best of our knowledge, this is  the first asymptotic optimality result  on \textit{ordering} sampling rules with  multiple  sampled sources per time instant, even when the number of anomalies is a priori known. Moreover, it unifies different setups regarding prior information on the number of anomalies and homogeneity/heterogeneity of the data sources, which have so far been  treated separately both methodologically and analytically.

The proposed sampling rule in this work differs from existing ordering sampling rules in the literature in two ways. First, (i) we have added a small, but critical, element of randomization. Specifically, at each time instant we allow \textit{at most one} source among those currently estimated as anomalous, and \textit{at most one} among those currently estimated as regular (non-anomalous), to be sampled with some probability. Second, (ii) we allow a subset of sources to be sampled with probability 1, depending on the current estimate of the anomalous sources. These features are not necessary in special cases such as when the number of anomalies is known a priori, and either the data sources are homogeneous, or exactly one source is sampled at each time instant. In fact, when the number of anomalies is known a priori and (ii) is not needed, we confirm the conjectured sufficient conditions for asymptotic optimality in \cite[(39)]{huang2017active}, by showing that they translate into cases where neither (i) is needed. 

Both parts of our proof differ substantially from existing approaches, even in the special case of a \textit{single} sampled source per time instant and a \textit{single} anomalous source in which previous approaches \cite{Cohen2015active,huang2017active} have been focused. To be specific, the first part in our proof is to show that the estimated subset of anomalous sources converges sufficiently fast to the true one. The second part is to show that the sampling frequency of each source converges sufficiently fast, and that the limits are greater than or equal to specified critical values. For this, we argue that it suffices to show that \textit{if the estimated anomalous subset is fixed at its true value, then for the anomalous (and respectively for the regular) data sources that are not sampled with probability $1$, it must hold that their test-statistics ``stay close"}. This is an intuitive property, since an ordering sampling rule prioritizes at each time instant the data sources whose current test-statistics are of low evidence. While the  proof  of this property is relatively simple when at most one source can be  sampled at each time instant, it turns out to be challenging when multiple sources can be sampled at each time instant.
 
The rest of the paper is organized as follows. In Section \ref{sec:formul}, we give the problem formulation. In Section \ref{sec:criterion}, we present the criterion for asymptotic optimality we will apply in our work. In Section \ref{sec:ordering}, we introduce the proposed family of ordering sampling rules, and we describe designs that achieve asymptotic optimality. Section \ref{consi} focuses on the consistency of ordering sampling rules, and Section \ref{asopt} on the asymptotic optimality. In Section \ref{sec:special}, we discuss special cases of the general theory and compare them to existing sampling rules. In Section \ref{sec: simulations}, we present a simulation study. In Section \ref{sec: conclusion} we have our conclusion, potential extensions of our work and future research directions. Technical parts of the proofs are organized in appendices presented in the Supplementary Material.

We end this section with some notation we use throughout the paper. We use  $:= $ to indicate the definition of a new quantity and $\equiv$ to indicate a duplication of notation. We set $\bN := \{1, 2 \ldots, \}$, $\bN_{0}:=\bN \cup \{0\}$, and  $[n] := \{1, \ldots, n\}$ for  $n \in \bN$. We denote by $A^c$ the complement, by  $|A|$ the size and by $2^A$ the powerset of a set $A$,  by $\lfloor  a  \rfloor$ the floor and by $\lceil a  \rceil$ the ceiling of a  positive number $a$, and by $\mathbf{1}$ the indicator of an event. The acronym iid stands for independent and identically distributed. We say that a sequence of positive numbers $\{a_n\, : \, n \in \bN \}$ is (i) \textit{summable} if $\sum_{n=1}^\infty a_n< \infty$, (ii) \textit{exponentially decaying} if there are  $c,d>0$ such that $a_n \leq c \exp\{-d\, n \}$ for every $n \in \bN$, and (iii) $p$-\textit{polynomially decaying} if there is  $c>0$ such that $a_n \leq c \, n^{-p}$ for every $n \in \bN$  . A sequence of random variables $\{ X(n),\, n \in \mathbb{N}\}$ converges $\Pro$-\textit{completely} to a real number $l$ if the sequence $\{\Pro(|X(n)-l|>\epsilon)\, : \,  n \in \bN \}$ is summable for all $\epsilon > 0$. 

\section{Problem formulation}\label{sec:formul}
Let $(\bS,  \cS)$ be an arbitrary  measurable space and  let $(\Omega, \cF, \Pro)$ be a probability space that hosts $M$ independent sequences of iid, $\bS$-valued random elements 
\begin{equation}\label{xi}
	X_{i}:=\{X_i(n)\, : \, n \in \bN\}, \qquad  i \in [M],
\end{equation}
generated by $M$ distinct data sources, and two independent sequences of iid Uniform$[0,1]$ random variables, 
\begin{equation*}
\hat{Z} := \{\hat{Z}_n \, :\, n \in \bN_0\}, \qquad  \check{Z} := \{\check{Z}_n \, :\, n \in \bN_0\},
\end{equation*}
which are used for randomization purposes. For each $i \in [M]$ and $n \in \bN$, $X_i(n)$ has density $f_i$ with respect to some $\sigma$-finite measure  $\mu_i$, that is equal to either $f_{0i}$ or $f_{1i}$, and  we refer to source $i$ as ``anomalous'' if $f_i=f_{1i}$, and as ``regular" if $f_i=f_{0i}$. We denote by $\Pro_A$ the underlying probability measure and by $\Exp_A$ the corresponding expectation when the subset of anomalous sources is  $$A:=\{ i \in [M] \,:\, f_i=f_{1i} \}.$$

We assume that it is \textit{a priori} known that there are at least $\ell$ and at most $u$ anomalous sources. That is, the  family of all possible subsets of anomalous sources is 
\begin{equation*}
\cP_{\ell, u}:=  \{D \subseteq [M]\, : \, \ell \leq |D| \leq u\},
\end{equation*}
where $\ell$ and $u$ are given, user-specified integers such that $0 \leq \ell \leq u \leq M$, with $\ell<M$, and $u >0$. Clearly, this encompasses the case where the number of anomalous data sources is a priori known ($\ell=u$), as well as the case where there is no prior information on the number of anomalies, i.e., $\ell=0, u=M$. 

We aim to identify all anomalous sources, if any, based on the sequentially acquired observations from all sources, \textit{under the constraint that it is not possible to observe all of them at every sampling instant}. For this, we need to specify a \textit{sampling} rule, a \textit{stopping} rule, and a \textit{decision} rule.
\begin{itemize}
	\item The sequence $R := \{R(n)\,:\, n \in \bN\}$ of $2^{[M]}$-valued random sets is a \textit{sampling} rule, if for every $n \in \bN$, $R(n)$ represents the subset of sources to be sampled at time $n$, and $R(n+1)$ is determined by the data collected up to the previous time instant, i.e., $R(n+1) \in \cF^R_n$, where $\cF^R_n$ is the $\sigma$-algebra generated by all available data up to time $n \in \bN$, i.e.,
	\begin{align*}
		\begin{split}
			\cF^R_n &:=
			\begin{cases} 
				\sigma(\hat{Z}_0,\,\check{Z}_0), \quad &\text{if} \quad  n=0,\\
				\sigma\left( \cF^R_{n-1},\,\hat{Z}_n,\,\check{Z}_n ,\, \{ X_{i}(n)\, :\, i \in R(n) \}  \right),  \quad &\text{if} \quad  n\in \bN.
			\end{cases}
		\end{split}
	\end{align*}  
	
	\item The random time $T$ is a \textit{stopping} rule, if $T$ is a $\Pro_A$-a.s. finite stopping time with respect to $\{\cF^R_n: n \in \bN\}$, for any set of anomalies $A \in \cP_{\ell,u}$, and we terminate sampling at time $n$ when $T=n$.
	 
	\item The sequence $\Delta := \{\Delta_n\,:\, n  \in \bN\}$ of $\cP_{l,u}$-valued random sets is a \textit{decision} rule, if for every $n \in \bN$, $\Delta_n \in \cF^R_n$, $\Delta_n$ represents the subset of sources that are estimated as anomalous at time $n$, and we declare them to be the anomalous sources when $T=n$.
\end{itemize}

The triplet $(R,T,\Delta)$ is called a policy, and it is clear that both stopping and decision rule depend on the sampling rule. We say that the policy $(R,T,\Delta)$ belongs to class $\cC(\alpha, \beta,\ell, u, K)$ if it satisfies the following two constraints.
\begin{enumerate}[(i)]
	\item The \textit{error constraint} according to which the probabilities of at least one \textit{false positive} and at least one \textit{false negative} upon stopping must not exceed the user-specified tolerance levels $\alpha$ and  $\beta$ in  $(0,1)$, respectively, i.e., for any set of anomalies $A \in \mathcal{P}_{\ell,  u}$,
	\begin{align} \label{err_const}
	  \Pro_{A} \left(\Delta_T \setminus A \neq \emptyset \right ) \leq\alpha,\quad \Pro_{A} \left(A \setminus \Delta_T \neq \emptyset  \right ) \leq \beta.
	\end{align}
	
	\item The \textit{sampling constraint} according to which the expected total number of observations up to stopping over the expected stopping time does not exceed a user-specified real number $K$ in $(0,M]$, i.e.,  for any set of anomalies $A \in \mathcal{P}_{\ell,  u}$,
	\begin{equation} \label{samp_const}
		\Exp_{A}\left[\sum_{n=1}^T |R(n)|\right] \leq K \; \Exp_{A}[T].
	\end{equation} 
\end{enumerate}

In  \cite[Theorem 5.2]{Tsopela_2022}, we proved that for any set of anomalies $A \in \mathcal{P}_{\ell,  u}$, and for any $\alpha,\, \beta \in (0,1)$, $\ell$, $u$, $K$, there is a policy $(R,T^R,\Delta^R)$ in $\cC(\alpha, \beta,\ell, u, K)$ that attains the minimum expected stopping time to a first-order asymptotic approximation as $\alpha, \beta \to 0$, i.e.,
\begin{equation}\label{AO1}
\lim_{\alpha,\, \beta \to 0}\, \frac{\Exp_{A}\left[T^R\right]}{\inf\limits_{(R,T,\Delta) \in  \cC(\alpha,\beta, \ell, u,K) } \; \Exp_{A}[T]}  = 1,
\end{equation}
assuming that $\lim\limits_{\alpha,\, \beta \to 0} |\log \alpha|/|\log \beta| = r \in(0, \infty)$ when $\ell < u$, as long as the sampling rule $R$ is designed so that each source $i \in [M]$ is sampled with a long run frequency greater than or equal to the specified critical value $c^{*}_{i}(A)$, in the sense that for all $i \in [M]$ and any $\epsilon > 0$,
\begin{equation}\label{ccs}
\sum_{n=1}^\infty \Pro_A \left( \pi_i^R(n)< c_i^*(A) -\epsilon\right)<\infty,
\end{equation}
where $\pi_{i}^R(n)$ is the empirical sampling frequency of source $i$ at time $n$, i.e.,
\begin{equation*}
\pi_i^R(n) := \frac{1}{n} \sum_{m=1}^n \mathbf{1}\{ i \in R(m)\}, \quad n \in \bN.
\end{equation*}
In \cite[Theorem 5.3]{Tsopela_2022}, we proved that the above condition is achieved by a probabilistic sampling rule which at time $n+1$ samples each source $i$ with probability $c_i^*(D)$, when $\Delta_n=D$. In this work, our \textit{goal} is to establish the analogous result for the family of ordering sampling rules, whose choice for the sources to be sampled at the following time instant does not depend only on $\Delta_n$, but also on the ordering of the sources' test-statistics. Our standing assumption throughout the paper is that, for each $i \in [M]$,  the Kullback-Leibler (KL) divergences of $f_{1i}$ and  $f_{0i}$ are positive and finite, i.e.,  
\begin{align*}
I_{i} := \int_{\bS} \log(f_{1i} / f_{0i}) \, f_{1i} \, d \nu_i \in (0, \infty), \quad  J_{i} := \int_{\bS} \log(f_{0i} / f_{1i}) \, f_{0i} \,  d \nu_i \in (0, \infty).
\end{align*}
However, for the main results of this work we will need to make the stronger assumption that
\begin{equation}\label{mc1}
	\sum_{i=1}^M \int_{\bS}  \left(  |\log(f_{1i}/f_{0i})|^{\mathfrak{p}} \, f_{1i}  + |\log(f_{0i}/ f_{1i})|^{\mathfrak{p}} \, f_{0i} \right)
	\; d\nu_i  < \infty.
\end{equation} 
for a particular $\mathfrak{p}>1$, sufficiently large. 

\section{A criterion for asymptotic optimality}\label{sec:criterion}
In this section, we review the criterion for asymptotic optimality, the formula of the test-statistic, and the stopping and decision rules we apply in this work. We further provide insightful properties of the minimum sampling frequencies in the long-run, i.e., $\mathbf{c^*}(A) := (c^*_1(A),\ldots,c^*_{M}(A))$.

\subsection{Log-Likelihood Ratio}\label{LLRR}
The test-statistic we apply for each source $i \in [M]$ is the Log-Likelihood Ratio (LLR) when we do sampling according to a sampling rule $R$, and we denote it by 
\begin{align} \label{LLR}
\Lambda^R_{i}(n) := \sum_{m=1}^n \log \left( \frac{f_{1i} (X_i(m))}{ f_{0i} (X_i(m))} \right) \, R_i(m), \quad n \in \bN,
\end{align} 
where $R_i(m)$ is the indicator of whether source $i$ is sampled at time $m$, i.e., $R_i(m):=\mathbf{1}\{ i \in R(m)\}$. For each $n \in \bN$, the decreasingly ordered LLRs are denoted by $\Lambda^R_{(1)}(n) \geq \ldots \geq  \Lambda^R_{(M)}(n)$, and we set $w^R_{i}(n)$ to be the corresponding index at time instant $n$, i.e.,
\begin{equation}\label{w_ord}
\Lambda^R_{(i)}(n):= \Lambda^R _{w^R_{i}(n)}(n), \quad i \in [M].
\end{equation}
 We set $\Lambda^R_{(0)}(n):= +\infty$, and $\Lambda^R_{(M+1)}(n) := -\infty$. When two LLRs are equal we arbitrarily break the tie.

\subsection{Stopping and decision rules}\label{subsec:stop_deci}
For any sampling rule $R$, the following stopping rule $T^R$ and decision rule $\Delta^R$ are defined such that the policy $(R,T^R,\Delta^R)$ satisfies the error constraint \eqref{err_const}, \cite[Theorems 3.1, 3.2]{Song_and_Fellouris_2016}.

When  the number of anomalous sources is known a priori, i.e., $\ell=u$,  $T^R$ stops  as soon as the  $\ell^{th}$ largest LLR  exceeds the next one by $\gamma {:=}|\log(\alpha \wedge \beta)|{+}\log(\ell(M{-}\ell))$, i.e., 
	\begin{align*}
		T^{R} &:=\inf \left\{ n \in \bN : \; \Lambda^R_{(\ell)}(n)-\Lambda^R_{(\ell+1)}(n) \geq \gamma \right\},
	\end{align*}  
	and $\Delta^R$ identifies as anomalous the sources with the $\ell$ largest LLRs, i.e., 
	\begin{align}\label{gap_decision rule}
		\Delta^R_n &:= \left\{w^R_1(n),\ldots, w^R_\ell(n) \right\}, \quad n \in \mathbb{N}. 
	\end{align}  
	
When the number of anomalous sources is completely unknown, i.e., $\ell=0$ and $u=M$, $T^R$ stops  as soon as every LLR is outside the interval $(-a, b)$, where $a:=|\log\beta|{+}\log M$, $b:=|\log \alpha|{+}\log M$, i.e., 
	\begin{align*}
		T^{R} := \inf \left\{n \in \bN : \;  \Lambda^R_{i}(n) \notin (-a,b)  \quad \text{for all}  \quad  i \in [M] \right\},
	\end{align*}
	and $\Delta^R$ identifies as anomalous the sources with positive LLRs, i.e.,  
	\begin{align}\label{intersection_dr}  
		\Delta^R_n:=\left\{ i \in [M] :  \; \Lambda^R_{i}(n)>0 \right\}, \quad n \in \mathbb{N}. 
	\end{align}
	
When $\ell<u$,  the stopping  and decision rules of the two previous cases are combined, and we have 
	\begin{align*}
		\begin{split}
			T^{R} := \inf  \{n \in \bN :  \quad &   \text{either} \quad  \Lambda^R_{(\ell+1)}(n){\leq}-a  
			\quad  \& \quad  \Lambda^R_{(\ell)}(n)-\Lambda^R_{(\ell+1)}(n) \geq c,  \\
			\quad & \; \text{or} \quad   \quad  \ell \leq p^{R}(n) \leq u \qquad  \& \qquad   \Lambda^R_{i}(n) \notin (-a,b)  \quad \forall \; 
			i \in [M],   \\
			\quad  & \; \text{or} \quad  \quad \;  \Lambda^R _{(u)}(n) \geq b \qquad  \& \quad  \Lambda^R_{(u)}(n)-\Lambda^R_{(u+1)}(n) \geq d \},  
		\end{split}
	\end{align*}
	where $a,b$ as in the previous case, $c:=|\log \alpha|+\log((M-\ell)M)$, $d:=|\log \beta|+\log(uM)$, and
	\begin{equation}\label{gi_decision_rule}
		\Delta^R_n :=\left\{ w^R_{i}(n) : \; i=1, \ldots, (p^R(n) \vee \ell) \wedge u \right\}, \quad n \in \mathbb{N},
	\end{equation}
	where $p^R(n)$ is the number of positive LLRs at time $n$. The sources that are identified as anomalous are the ones with positive LLRs, as long as their number is between $\ell$ and $u$. If their number is larger than $u$ (resp. smaller than $\ell$), then the anomalous are the ones  with the $u$ (resp. $\ell$) largest LLRs. 

\subsection{A criterion for asymptotic optimality}
In the rest of this paper, we consider the above stopping and decision rules, and we restrict our attention to the selection of a sampling rule that satisfies the sampling constraint \eqref{samp_const} with $T=T^R$, and it achieves the first-order asymptotic performance \eqref{samp_const} for an $A \in \cP_{\ell, u}$ in which case we call it \textit{asymptotically optimal} under $\Pro_{A}$. If $R$ satisfies the aforementioned conditions for any set of anomalies $A \in \cP_{\ell, u}$, we just call it \textit{asymptotically optimal}. In \cite[Theorem 5.2]{Tsopela_2022}, we proved that a sampling rule which satisfies the sampling constraint \eqref{samp_const} with $T=T^R$, is asymptotically optimal under $\Pro_{A}$, if it samples each source with a long run frequency greater than or equal to $c^{*}_{i}(A)$, in the sense described in \eqref{ccs}. Since, for any $c \geq c^{*}_{i}(A)$,
\begin{align*}
\Pro_A \left( \pi_i^R(n)< c_i^*(A) -\epsilon\right) \leq \Pro_A \left( \pi_i^R(n)< c -\epsilon\right) \leq \Pro_A \left( |\pi_i^R(n)-c| > \epsilon\right),
\end{align*}
based on the definition of complete convergence, we provide the following proposition from \cite[Theorem 5.2]{Tsopela_2022}, which states the criterion for asymptotic optimality under  $\Pro_{A}$  we use throughout this work.

\begin{proposition} \label{crit:AO} 
Let  $A {\in} \cP_{\ell, u}$ and let $R$ be a  rule that satisfies  \eqref{samp_const}  with $T{=}T^R$. If, for each  $i {\in} [M]$,  $\pi_i^R(n)$ converges completely to a number in  $[c^*_i(A), 1]$,  then $R$ is asymptotically optimal under $\Pro_A$.
\end{proposition}

The \textit{minimum long-run sampling frequencies} $c^*_i(A)$ are obtained from the solution of a max-min problem formulated and solved in \cite[Appendix]{Tsopela_2022}. In all cases of $\ell$, $u$, $K$, each $c^*_i(A)$ is inversely proportional to its KL divergence $I_i$ (resp. $J_i$) when $i \in A$ (resp. $i \notin A$), and 
\begin{equation}\label{A2}
c^*_{i}(A) \, I_{i}=c^*_{j}(A)\,I_{j}, \; \forall \, i,j\, \in A, \qquad \,  c^*_{i}(A) \, J_{i}=c^*_{j}(A)\,J_{j}, \; \forall \, i,j\, \notin A.
\end{equation}	
Thus, asymptotic optimality requires sampling more frequently the sources whose testing problems are harder, i.e., those of smaller KL numbers. In view of \eqref{A2}, in order to specify the vector $\mathbf{c^*}(A)$, it suffices to know the maximum element in $\{c_i^*(A): i \in A\}$, and the maximum element in $\{c_i^*(A): i \in A^c\}$, i.e.,
\begin{equation}\label{xy}
 	x(A):= \max_{i \in A} c_i^*(A),\qquad y(A):= \max_{i \in A^c} c_i^*(A),
\end{equation}
whose formulas were originally presented in \cite[Theorem 5.1]{Tsopela_2022}, and they are also given in Appendix \ref{app: defin x and y} for completeness. Hence,
\begin{equation}\label{c_bounds}
	c^*_{i}(A) = 	
	\begin{cases}
	x(A) \, I^*(A)/I_i \;\;	\leq I^*(A)/I_i,  \quad  &\text{if} \quad  i \in A,  \\
	y(A) \,  J^*(A^c)/J_i \leq J^*(A^c)/J_i,  \quad  &\text{if} \quad  i \in A^c,
	\end{cases}
\end{equation}	
where $I^*(A):= \min\limits_{i \in A}I_{i}$, $J^*(A^c):= \min\limits_{i \in A^c} J_{i}$, and the inequality follows by the fact that $x(A), y(A) \in [0,1]$.

The ratios $I^*(A) /I_i$ (resp.  $J^*(A^c) /J_i$) are the maximum sampling frequencies required for asymptotic optimality under $\Pro_A$ for a source $i \in A$ (resp. $i \in A^c$), when there are not sampling constraints, i.e., $K=M$, whereas  $x(A)$ (resp. $y(A)$) is the respective fraction of the maximum sampling frequency that is actually required for asymptotic optimality under the presence of sampling constraints. When $x(A)$ (resp. $y(A)$) is equal to $0$,  it is not necessary to sample any sources from $A$ (resp. $A^c$) in order to achieve asymptotic optimality under $\Pro_A$, but always one of $x(A)$, $y(A)$ is non-zero. When $x(A)$ (resp. $y(A)$) is equal to $1$, asymptotic optimality under $\Pro_A$ requires that  each  source in $A$ (resp. $A^c$) with KL divergence equal to $I^*(A)$ (resp. $J^*(A^c)$) to be sampled continuously, i.e. with probability $1$ in the long run,
\begin{equation} \label{excess_1} 
	\begin{split}
		x(A)=1  \quad  &\Leftrightarrow \quad 
		c_i^*(A)= 1, \quad \forall  \; i \in A \, \mbox{ s.t. } \, I_i=I^*(A)  
		\quad \;\; \Leftrightarrow \quad 
		\{i \in A: c_i^*(A)= 1\} \neq \emptyset,\\
		y(A)=1 \quad   &\Leftrightarrow \quad 
		c_i^*(A)=1,  \quad \forall  \;  i \notin A \, \mbox{ s.t. } \, J_i=J^*(A^c) 
		\quad  \Leftrightarrow \quad 
		\{i \notin A: c_i^*(A)= 1\} \neq \emptyset.
	\end{split}
\end{equation}
Last, we point out that from the formulas of $x(A)$, $y(A)$, we have
\begin{equation}\label{A1}
\sum_{i=1}^M  c_i^*(A) = x(A)  \, \sum_{i \in A}	\frac{I^*(A)}{I_i} + y(A) \, \sum_{i \notin A}  \frac{J^*(A^c)}{J_i} \leq K,
\end{equation}
which means that a sampling rule that samples each source $i \in [M]$ with probability $c_i^*(A)$ for every $i \in [M]$, would satisfy the sampling constraint \eqref{samp_const}, and as a result it would be asymptotically optimal. 

\section{Ordering sampling rules}\label{sec:ordering}
In this section, we introduce the family of \textit{ordering} sampling rules. Unlike a \textit{probabilistic} sampling rule, which specifies the probability with which each source is sampled at each time instant based only on the current estimate of the anomalous sources \cite[Section IV]{Tsopela_2022}, an ordering sampling rule takes also into account the ordering of the LLRs, and prioritizes the sources with the least evidence. Our ordering sampling rule has two fundamental differences compared to the sampling rule (termed as  ``deterministic'') in \cite{Cohen2015active,huang2017active}. The first one is that the proposed sampling rule is not fully deterministic, in the sense that we allow at most one source among those currently estimated as anomalous, and at most one among those currently estimated as regular, to be sampled with some probability. Second, we allow a subset of sources to be sampled with probability 1, depending on the current estimate of the anomalous sources. Both these novel features turn out to be critical for achieving asymptotic optimality. \textit{In what follows, $R$ denotes always an ordering sampling rule, and in order to lighten the notation we suppress the superscript $R$, e.g., we simply write $\Delta_n$, $\cF_n$, $\Lambda_{i}(n)$ instead of $\Delta_n^R$, $\cF^R_n$, $\Lambda^R_{i}(n)$.}

\subsection{Formulation of an ordering sampling rule}
Ordering sampling rules prioritize the sampling of the sources with the lowest statistical evidence. By definition of the stopping rule in Subsection \ref{subsec:stop_deci}, we observe that at each time instant, the sources with the lowest statistical evidence among those estimated as anomalous (resp. regular) are the ones with the smallest (resp. largest) LLRs. We first need to specify how many of the sources among those estimated as anomalous (resp. regular) we need to sample, at each time instant, and then choose the ones with the lowest statistical evidence from each subset. Thus, we need to define two functions $\hat{N}, \, \check{N} \, :\, \cP_{\ell, u} \, \to \, [0,K]$ such that $\hat{N}(D) \leq |D|$, $\check{N}(D) \leq |D^c|$, for all $D \in \cP_{\ell, u}$, and so that at each time $n+1$ we sample 
\begin{equation}\label{def_N}
	\begin{split}
		&\lfloor \hat{N}(\Delta_n) \rfloor  + \mathbf{1} \{ \hat{Z}_n\leq \hat{N}(\Delta_n)- \lfloor \hat{N}(\Delta_n) \rfloor \}\quad \;\, \text{sources from} \; \; \Delta_n, 
		\\
		\text{and} \quad  &\lfloor \check{N}(\Delta_n) \rfloor  + \mathbf{1} \left\{ \check{Z}_n \leq \check{N}(\Delta_n)  - \lfloor \check{N}(\Delta_n)  \rfloor \right\}
		\quad  \text{sources from} \; \;  \Delta_n^c,
	\end{split}
\end{equation}
where  $\{\hat{Z}_n: n \in \bN_0\}$ and $\{\check{Z}_n: n \in \bN_0\}$ are two independent sequences of independent, Uniform$[0,1]$ random variables, independent of the observations of the sources. For any true subset of anomalies $A \in \cP_{\ell, u}$, the expected number of sources we sample from $\Delta_n$ (resp. $\Delta_n^c$), at time n, are
\begin{equation}\label{expe_n}
	\begin{split}
		\sum_{i \in \Delta_n} \Exp_A\left[ R_{i}(n+1)   \, \big{|} \, \cF_{n} \right]  &=  \lfloor \hat{N}(\Delta_n) \rfloor  +  \hat{N}(\Delta_n)- \lfloor \hat{N}(\Delta_n) \rfloor =\hat{N}(\Delta_n), \; \mbox{ a.s.} \quad \forall \; A \in \cP_{\ell, u},\\
		\sum_{i \notin \Delta_n} \Exp_A\left[ R_i(n+1)   \, \big{|} \, \cF_{n} \right]  &= \lfloor \check{N}(\Delta_n) \rfloor  +  \check{N}(\Delta_n)  - \lfloor \check{N}(\Delta_n)  \rfloor = \check{N}(\Delta_n), \; \mbox{ a.s.} \quad \forall \; A \in \cP_{\ell, u}.
	\end{split}
\end{equation}
Hence, for each $D \in \cP_{\ell, u}$, $\hat{N}(D)$ (resp. $\check{N}(D)$) is  the expected  number of  sampled sources  among those estimated as anomalous (resp. regular) given all previously collected data, whenever the current estimate of the anomalous subset is equal to $D$, i.e. $\Delta_n=D$. We note that for all $D \in \cP_{\ell, u}$, $\hat{N}(D)$ (resp. $\check{N}(D)$) do not have to be integers, this is the reason why, whenever $\Delta_n=D$, we need to sample one source from $D$ \textit{with probability} $\hat{N}(D)- \lfloor \hat{N}(D) \rfloor$, and one from $D^c$ \textit{with probability} $\check{N}(D)- \lfloor \check{N}(D) \rfloor$.

In view of \eqref{excess_1}, we recall that in order to achieve asymptotic optimality we may need to sample some of the sources with probability $1$, regardless of the current ordering of their LLRs compared to the others. For this, we need to define two functions $\hat{G}, \, \check{G} \, :\, \cP_{\ell, u} \, \to \, 2^{[M]}$, such that 
\begin{align*}
	\begin{split}
		\hat{G}(D) &\subseteq D	 \qquad \; \text{and} \qquad |\hat{G}(D) |  \leq 
		\hat{N}(D), \quad \forall \; D \in \cP_{l,u},\\
		\check{G}(D) &\subseteq D^c	 \qquad
		\text{and} \qquad  |\check{G} (D) |  \leq  \check{N}(D), \quad \forall \; D \in \cP_{l,u},
	\end{split}
\end{align*}
so that the sources in $\hat{G}(\Delta_n)$ (resp. $\check{G}(\Delta_n)$) are sampled with probability $1$ at time $n+1$. 

\begin{definition}\label{definition_ordering}
We say that $R$ is an ordering sampling rule if there are functions $\hat{N}$, $\check{N}$, and $\hat{G}$, $\check{G}$, so that the sources to be sampled at time $n+1$, i.e., $R(n+1)$, are the sources in $\hat{G}(\Delta_n) \cup \check{G}(\Delta_n)$, the sources that correspond to the
\begin{equation*}
	\lfloor \hat{N}(\Delta_n) \rfloor -|\hat{G}(\Delta_n)| + \mathbf{1} \{ \hat{Z}_n \leq \hat{N}(\Delta_n)  - \lfloor \hat{N}(\Delta_n) \rfloor \}
\end{equation*}
\textit{smallest} LLRs in $\Delta_n \setminus \hat{G}(\Delta_n)$, and the sources that correspond to the
\begin{equation*}
	\lfloor \check{N}(\Delta_n) \rfloor -|\check{G}(\Delta_n)| + \mathbf{1} \{ \check{Z}_n \leq \check{N}(\Delta_n)  - \lfloor \check{N}(\Delta_n) \rfloor \}
\end{equation*}
\textit{largest} LLRs in $\Delta_n^c \setminus \check{G}(\Delta_n)$.
\end{definition}

We provide the condition that $\hat{N}$, $\check{N}$ must satisfy so that $R$ respects the sampling constraint \eqref{samp_const}.

\begin{proposition}\label{nn_kcons}
Let $R$ be an ordering sampling rule. If for the functions $\hat{N}$, $\check{N}$ it holds
\begin{equation}\label{cond_sampl_constraint}
	\hat{N}(D) + \check{N}(D) \leq K, \quad \forall \, D \in \cP_{l,u},
\end{equation}
then $R$ satisfies the sampling constraint \eqref{samp_const}.
\end{proposition}

\begin{IEEEproof}
Let us assume $\hat{N}$, $\check{N}$ that satisfy \eqref{cond_sampl_constraint}. In view of \eqref{expe_n}, for all $n \in \bN$, it holds
\begin{equation*}
	\begin{aligned}
		\Exp_{A}\left[|R(n+1)|\, \big{|}\, \cF_n \right] &=  \sum_{i \in \Delta_n} \Exp_A\left[ R_{i}(n+1)   \, \big{|} \, \cF_{n} \right] + \sum_{i \notin \Delta_n} \Exp_A\left[ R_i(n+1)   \, \big{|} \, \cF_{n} \right]\\
		&= \hat{N}(\Delta_n) + \check{N}(\Delta_n) \leq K.
	\end{aligned}
\end{equation*}
By the above inequality, and the fact that $T$ is an $\{\cF_n: n \in \bN\}$-stopping time, it follows that
\begin{equation*}
\begin{aligned}
\Exp_{A}\left[\sum_{n=1}^T |R(n)|\right] = \Exp_{A}\left[ \Exp_{A}\left[ \sum_{n=1}^{\infty} |R(n)| \mathbf{1} \{ T {\geq} n \}\, \big{|}\, \cF_{n-1} \right] \right]&= \Exp_{A}\left[ \sum_{n=1}^{\infty} \Exp_{A}\left[|R(n)|\, \big{|}\, \cF_{n-1} \right] \mathbf{1} \{ T {\geq} n \} \right] \\
&\leq K \, \Exp_{A}\left[ T \right].
\end{aligned}
\end{equation*}
\end{IEEEproof}

\subsection{The default design for asymptotic optimality}\label{default}
By Proposition \ref{crit:AO}, $R$ is asymptotically optimal under
$\Pro_A$ if it samples each source $i\in[M]$ with a long–run frequency at least $c_i^*(A)$. This suggests that, for $R$ to be asymptotically optimal, the expected number of sampled sources, among those in $D$ (resp.\ $D^c$), i.e., $\hat{N}(D)$ (resp.\ $\check{N}(D)$), when $\Delta_n =D$, should be at least equal to the sum of the respective minimum long–run sampling frequencies, i.e., 
\begin{equation} \label{N_opt}
	\begin{aligned}
	\hat{N}(D) \geq \sum_{i \in  D} c_i^*(D), \; \forall \, D \in \cP_{l,u}, \qquad \check{N}(D) \geq \sum_{i \notin D} c_i^*(D),  \; \forall \, D \in \cP_{l,u}.
	\end{aligned}
\end{equation}
In the same time, we want $\hat{N}$, $\check{N}$ to satisfy \eqref{cond_sampl_constraint} so that $R$ satisfies the sampling constraint \eqref{samp_const}. In view of \eqref{A1}, we verify that \eqref{cond_sampl_constraint} and \eqref{N_opt} are simultaneously satisfied when $\hat{N}$, $\check{N}$ are selected as
\begin{equation}\label{N_default}
 \hat{N}(D)  = \sum_{i \in  D} c_i^*(D), \; \forall \, D \in \cP_{l,u}, \qquad \check{N}(D)  = \sum_{i \notin D} c_i^*(D),  \; \forall \, D \in \cP_{l,u}.
\end{equation}
We note that \eqref{N_default} is not necessarily the only solution of \eqref{cond_sampl_constraint}-\eqref{N_opt}, especially when $K$ is large. As explained in \eqref{excess_1}, in some cases in order to achieve asymptotic optimality under $\Pro_A$, some of the sources should be sampled with probability $1$. This suggests that, when $\Delta_n =D$, $\hat G(D)$ (resp.\ $\check G(D)$) must contain at least the sources that need to be sampled with probability $1$ for $R$ to be asymptotically optimal, i.e.,
\begin{equation} \label{G_supset}
\hat{G}(D) \supseteq \{ i \in D :\, c^*_{i}(D)=1 \},  \; \forall \;  D \in \cP_{l,u},\qquad 
\check{G}(D) \supseteq \{ i \notin D :\, c^*_{i}(D)=1 \},  \; \forall \;  D \in \cP_{l,u}.
\end{equation}
In view of the default choice \eqref{N_default} for $\hat{N}$, $\check{N}$, the respective default choice for $\hat{G}$, $\check{G}$ is
\begin{equation} \label{G_default}
\hat{G}(D) = \{ i \in D :\, c^*_{i}(D)=1 \},  \; \forall \;  D \in \cP_{l,u},\qquad 
		\check{G}(D) = \{ i \notin D :\, c^*_{i}(D)=1 \},  \; \forall \;  D \in \cP_{l,u}.
\end{equation}
Next, we prove that selecting $\hat{N}$, $\check{N}$ according to \eqref{N_default}, and  $\hat{G}$,  $\check{G}$ according to \eqref{G_default} leads to asymptotic  optimality, which makes these choices the \textit{default design} for $R$. We will also show that any selection of $\hat{N}$ and  $\check{N}$  that  satisfies  \eqref{cond_sampl_constraint}-\eqref{N_opt} leads to asymptotic optimality as long as $\hat{G}$, $\check{G}$ are selected appropriately.

\subsection{A general design for asymptotic optimality}\label{general}
We describe a general design for an ordering sampling rule to be asymptotically optimal, beyond the default design. In the simple case, where $\hat{N}(D)=D$ (resp. $\check{N}(D)=D^c$), when $\Delta_n =D$ at time $n+1$ we sample all sources in $D$ (resp.\ $D^c$), and thus we choose $\hat G(D)=D$ (resp.\ $\check G(D)=D^c$). In general, the appropriate choice of $\hat G, \check G$ is more involved, and it is based on the following proposition, which describes the conditions that $\hat N$, $\check N$, $\hat G$, $\check G$ must satisfy in order to $R$ to be asymptotically optimality. 

\begin{proposition}\label{prop_ord_ao}
Let $R$ be an ordering sampling rule with $\hat{N}$, $\check{N}$ that satisfy constraint \eqref{cond_sampl_constraint}. Suppose that for all $D \in \cP_{\ell, u}$, the functions $\hat{N}$, $\hat{G}$ satisfy $\hat{G}(D)=D$, when $\hat{N}(D) = |D|$, and
\begin{equation}\label{ng_shat}
	\sum_{i \in 	D \setminus \hat{G}(D)} c^{*}_{i}(D)  \leq 	\hat{N}(D) - |\hat{G}(D)|  
	<  \sum_{i \in D \setminus \hat{G}(D)} \frac{ I^*(D \setminus \hat{G}(D))}{I_i}, \quad \; \mbox{when } \hat{N}(D) < |D|,
\end{equation}
and the functions $\check{N}$, $\check{G}$ satisfy $\check{G}(D)=D^c$, when $\check{N}(D) = |D^c|$, and
\begin{equation}\label{ng_scheck}			
	\sum_{i \in 	D^c \setminus \check{G}(D)} c^{*}_{i}(D) 
	\leq 	\check{N}(D) - |\check{G}(D)| < \sum_{i \in D^c \setminus \check{G}(D)} \frac{ J^*(D^c \setminus \check{G}(D))}{J_i}, \quad \mbox{when } \check{N}(D) < |D^c|.
\end{equation}		
If also, the moment condition \eqref{mc1} holds for
\begin{equation*}
\mathfrak{p} > \max_{D \in \cP_{\ell, u}}\left\{ 8, 3 \cdot 2^{ \lceil \hat{N}(D) \rceil - |\hat{G}(D)|-1} +1, 3 \cdot 2^{ \lceil \check{N}(D) \rceil - |\check{G}(D)|-1} +1\right\},
\end{equation*}
then $R$ is asymptotically optimal.
\end{proposition}

\begin{IEEEproof}
The proof is presented in Section \ref{asopt}.
\end{IEEEproof}

Given two functions $\hat N, \check N$ that satisfy  \eqref{cond_sampl_constraint}-\eqref{N_opt}, Proposition \ref{prop_ord_ao} suggests that the functions $\hat G, \check G$ should be selected so that \eqref{ng_shat} (resp. \eqref{ng_scheck}) are satisfied. For this, let us fix $D \in \cP_{\ell, u}$, and assume $\hat{N}(D) < |D|$, in order to determine $\hat G(D)$ that satisfies \eqref{ng_shat}, we focus on a strictly increasing sequence of sets,
\begin{equation}\label{st_g}
\emptyset = \hat \cG_0 \subset \hat \cG_1 \subset \cdots \subset \hat \cG_{|D|} = D
\end{equation} 
such that for each $u \in \{0, \dots, |D|\}$ it holds $|\hat{\cG}_u|=u$, and 
\begin{equation} \label{small_KL}
	I_i \leq I_j, \quad \forall \;\; i \in \hat{\cG}_u, \;\; \forall \;\; j  \in D \setminus \hat{\cG}_u,
\end{equation}
which means that the KL information numbers of all sources in $\hat{\cG}_u$ are less than or equal to the KL numbers of all sources in $D \setminus \hat{\cG}_u$. We choose our $\hat G(D)$ to be the \textit{minimum size} set among $\{\hat \cG_u : 0 \leq u \leq |D|\}$ that satisfies the right-hand side inequality of \eqref{ng_shat}, i.e., $\hat G(D)= \hat{\cG}_{u^*}$, where
\begin{equation}\label{st_u1}
	u^* := \arg\min_{u \in \{0, \dots, |D|\}} \bigg\{ \hat N(D) - u \leq \sum_{i \in D \setminus \hat \cG_u} I^*(D \setminus \hat \cG_u)/I_i 
	\bigg\}.
\end{equation}
When $\check{N}(D) < |D^c|$, we consider the strictly increasing sets  $\{\check \cG_u : 0 {\leq} u {\leq} |D^c|\}$, and $\check G(D)= \check{\cG}_{u^*}$ where
\begin{equation}\label{st_u2}
	u^* := \arg\min_{u \in \{0, \dots, |D^c|\}} \bigg\{ \check N(D) - u \leq \sum_{i \in D^c \setminus \check \cG_u} J^*(D^c \setminus \check \cG_u)/J_i \bigg\}.
\end{equation}

\begin{theorem}\label{thm_optG} 
 We fix $D \in \cP_{\l,u}$.
 \begin{enumerate}[(i)]
 	\item Any pair $\hat{N}(D)$, $\hat G(D)$ that satisfies \eqref{ng_shat} satisfies also \eqref{N_opt} and \eqref{G_supset} (resp. for $\check{N}(D)$, $\check G(D)$).
 	
 	\item For any $\hat{N}(D) < |D|$ (resp. $\check{N}(D) < |D^c|$) that satisfies \eqref{N_opt}, there is a $\hat G(D)$ (resp. $\check G(D)$) selected according to \eqref{st_u1} (resp. \eqref{st_u2}) that satisfies \eqref{ng_shat} (resp. \eqref{ng_scheck}).
 \end{enumerate}
\end{theorem}

\begin{IEEEproof}
(i) Let us consider $\hat{N}(D)$, $\hat G(D)$ that satisfy \eqref{ng_shat}. In order to show \eqref{N_opt} for $\hat{N}(D)$, we observe that by the left-hand side of \eqref{ng_shat},
\begin{equation*}
\hat{N}(D) \geq  \sum_{i \in D \setminus \hat{G}(D)} c^{*}_{i}(D) +|\hat{G}(D)| \geq \sum_{i \in D} c^{*}_{i}(D)
\end{equation*}
where the last inequality follows by the fact that $c^{*}_{i}(D) \leq 1$ for all $i \in [M]$. In order to show \eqref{G_supset}, we first note that by \eqref{excess_1}, if $x(D) < 1$ then $\{i \in D: c_i^*(D)= 1\}=\emptyset$, and thus \eqref{G_supset} holds trivially. Hence, we assume that $x(D) = 1$, and we will show that for any $j \in \{ i \in D : c_i^*(D) = 1 \}$, or equivalently any $j \in \{ i \in D : I_i = I^*(D) \}$, it holds $j \in \hat G(D)$. We prove this by contradiction. Suppose that there is a $j \in D$ such that $I_j = I^*(D)$ and $j \in D \setminus \hat G(D)$. Since $D \setminus \hat G(D) \subseteq D$, it holds $I^*(D) \leq I^*(D \setminus \hat G(D))$, but also $j \in D \setminus \hat G(D)$ and $I_j = I^*(D)$. Hence, $I^*(D) = I^*(D \setminus \hat G(D))$. By the left-hand side of \eqref{ng_shat},
\begin{equation*}
\hat{N}(D) - |\hat{G}(D)| \geq  \sum_{i \in D \setminus \hat{G}(D)} c^{*}_{i}(D) = \sum_{i \in D \setminus \hat{G}(D)} \frac{I^*(D)}{I_i} = \sum_{i \in D \setminus \hat{G}(D)} \frac{I^*(D \setminus \hat G(D))}{I_i},
\end{equation*}
where the first equality follows by \eqref{c_bounds} when $x(D)=1$, and the second by $I^*(D) = I^*(D \setminus \hat G(D))$. The above inequality contradicts the right-hand side inequality of \eqref{ng_shat}.

(ii) First, we note that for the set $\hat \cG_{u}$ with $u = \lfloor \hat N(D) \rfloor$ it holds $D \setminus \hat \cG_{u} \neq \emptyset$ because $\hat N(D) < |D|$, and
\begin{equation*}
\hat N(D) - u = \hat N(D) - \lfloor \hat N(D) \rfloor < 1\leq 
\sum_{i \in D \setminus \hat \cG_{u}} \frac{I^*(D \setminus \hat \cG_{u})}{I_i},
\end{equation*}
where the last inequality is true because there is always an $i \in D \setminus \hat \cG_{u}$ such that $I_i = I^*(D \setminus \hat \cG_{u})$. Thus, $\hat{\cG}_{u}$ satisfies the right–hand side inequality of \eqref{ng_shat}, and by checking the remaining subsets in \eqref{st_g} we can determine the $u^*$ in \eqref{st_u1}. Therefore, there is a set $\hat G(D)$ selected according to \eqref{st_u1} that satisfies the right–hand side inequality of \eqref{ng_shat}, and it remains to prove the left–hand side of \eqref{ng_shat}, i.e.,
\begin{equation*}
 \hat{N}(D) - |\hat{G}(D)| \geq  \sum_{i \in D \setminus \hat{G}(D)} c^{*}_{i}(D).
\end{equation*}
Since $\hat{G}(D) = \hat \cG_{u^*}$ is the minimum size set that satisfies right–hand side of \eqref{ng_shat}, for $\hat \cG_{u^* -1}$ it holds
\begin{equation}\label{dart1}
 \begin{aligned}
  \hat{N}(D) {-} (u^* {-}1) {\geq} \sum_{i \in D \setminus \hat \cG_{u^*-1}}\frac{I^*(D \setminus \hat \cG_{u^*-1})}{I_i}
\;\; \Leftrightarrow \;\; \hat{N}(D) {-} |\hat{G}(D)|  {\geq}  \sum_{i \in D \setminus \hat \cG_{u^*-1}} \frac{I^*(D \setminus \hat \cG_{u^*-1})}{I_i} {-}1,
 \end{aligned}
\end{equation}
where we used the fact that $u^* = |\hat G(D)|$. Let us denote by $j$ the source such that $\{j\} = \hat \cG_{u^*} \setminus \hat \cG_{u^*-1}$, and by property \eqref{small_KL} it holds $I_{j} = I^*(D \setminus \hat \cG_{u^*-1})$. Therefore, \eqref{dart1} becomes
\begin{equation*}
 \begin{aligned}
\hat{N}(D) - |\hat{G}(D)| \geq & \sum_{i \in D \setminus \hat \cG_{u^*-1}} \frac{I^*(D \setminus \hat \cG_{u^*-1})}{I_i} -1\\
 &= \sum_{i \in D \setminus \hat \cG_{u^*}} \frac{I^*(D \setminus \hat \cG_{u^*-1})}{I_i} + \frac{I^*(D \setminus \hat \cG_{u^*-1})}{I_j} -1
 \geq  \sum_{i \in D \setminus \hat{G}(D)} \frac{I^*(D \setminus \hat \cG_{u^*-1})}{I_i},
 \end{aligned}
\end{equation*}
where in the last inequality we used the fact that $\hat{G}(D) = \hat \cG_{u^*}$. Since $ I^*(D \setminus \hat \cG_{u^*-1}) \geq I^*(D)$, and by \eqref{c_bounds} $I^*(D)/I_i \geq c^{*}_{i}(D)$ for all $i \in  D$, we conclude the claim.
\end{IEEEproof}

Last, we prove that the default design described in Subsection \ref{default}, where $\hat N, \check N$ are chosen as in \eqref{N_default}, and $\hat G, \check G$ as in \eqref{G_default}, satisfies \eqref{ng_shat} (resp. \eqref{ng_scheck}). 

\begin{corollary}
The default ordering sampling rule is asymptotically optimal.  
\end{corollary}

\begin{IEEEproof}
Let us fix $D \in \cP_{\ell,u}$, and $\hat N(D)$, $\hat G(D)$ as in the default selection \eqref{N_default}, \eqref{G_default}. It suffices to show that for $\hat N(D) < |D|$, the pair $\hat N(D)$, $\hat G(D)$ satisfies \eqref{ng_shat}, and then the result follows by Proposition \ref{prop_ord_ao}. Indeed, for the default selection of  $\hat N(D)$, $\hat G(D)$, we have
\begin{equation}\label{tn}
\begin{aligned}
\hat N(D) - |\hat G(D)|= \sum_{i \in D} c^{*}_{i}(D) - \sum_{i \in \hat{G}(D)} c^{*}_{i}(D)= \sum_{i \in D \setminus \hat{G}(D)} c^{*}_{i}(D) \leq \sum_{i \in D \setminus \hat G(D)} \frac{I^*(D)}{I_i},
\end{aligned}
\end{equation}
where the last inequality follows by \eqref{c_bounds}. Since $\hat N(D) {<} |D|$, and $\hat G(D){=}\{ i {\in} D {:} I_i{=}I^*(D) \}$, by \eqref{excess_1} we have 
$I^*(D) {<} I_j$, $\forall j {\in} D {\setminus} \hat G(D)$, which implies $I^*(D) {<} I^*(D {\setminus} \hat G(D))$, and by \eqref{tn} we deduce \eqref{ng_shat}.
\end{IEEEproof}

\section{Consistency}\label{consi}
In this section, we establish the conditions on the functions $\hat{N}$, $\check{N}$, $\hat{G}$, $\check{G}$ so that an ordering sampling rule $R$ is \textit{quickly} consistent, in the sense that it guarantees that the estimated subset of anomalies $\Delta_n$ converges \textit{quickly} to the true subset of anomalies $A$. This property implies that we can recover the true subset of anomalies relatively fast, and it is the first step towards the proof of asymptotic optimality. 

\begin{definition} 
We say that a sampling rule $R$ is $z$-\textit{quickly} consistent under $\Pro_A$ for some $z \geq 1$, if $\Exp_{A} \left[(s_{A})^z \right] < \infty$, where $s_A$ is the random time starting from which the estimated subset of anomalies remains permanently equal to the true one, i.e.,
	\begin{equation}\label{sigmaA}
		s_{A} :=\inf\left\{n \in \bN : \Delta_m =A  \quad \text{for all} \; \;   m \geq n\right\}.
	\end{equation}  
When $R$ is $z$-\textit{quickly} consistent under $\Pro_A$, for any $A \in \cP_{\ell, u}$, we say that $R$ is $z$-\textit{quickly} consistent.
\end{definition}

The definition of a $z$-quickly consistent sampling rule $R$ is equivalent to the $z$-quick convergence of 
the estimated subset of anomalies $\Delta_n$ to the true subset $A$ under $P_A$ (see \cite[Def.~2.4.4]{tartakovsky_book_2014}), when sampling according to $R$. Next, we introduce a criterion for the $z$-quick consistency of a sampling rule $R$.
 
\begin{proposition}\label{propo1}
	Let us fix $A \in \mathcal{P}_{\ell,u}$ and $z \geq 1$. An ordering sampling rule $R$ is $z$-quickly consistent under $P_A$ 
	if there is a $\rho > 0$ such that
	\begin{equation}\label{series}
		\sum_{n=1}^{\infty} n^z \, \Pro_{A}\left(\pi_{i}(n)< \rho \right) < \infty,
	\end{equation}
	for every $i \in A$ when $x(A) > 0$, \textit{and} for every $i \notin A$ when $y(A) > 0$. In the special setup where $\ell=u$, it suffices that  \eqref{series} holds for every $i \in A$ when $x(A)>0$, \textit{or} for every $i \notin A$  when $y(A)>0$. 
\end{proposition}

\begin{IEEEproof}
  In order to prove that $R$ is $z$-quickly consistent, it suffices to show that $\Exp_{A} \left[(s_{A})^z \right] < \infty$. By the definition of $s_A$, we observe that $\{ s_{A} > n \}=\{ \exists\, m\geq n: \Delta_m \neq A \}$, and by the definition of $\Delta_n$ in \eqref{gap_decision rule}, \eqref{intersection_dr}, \eqref{gi_decision_rule}, we deduce the following inequalities,

  (i) if $\ell=u$, then
  \begin{equation*}
  	\Pro_{A}(s_{A} > n) \leq \sum\limits_{i \in A,\,j \notin A} \Pro_{A}\left(\exists\, m\geq n:\, \Lambda_{j}(m) \geq {\Lambda}_{i}(m)\right),
  \end{equation*}
  
  (ii) if $\ell<|A|<u$, then
  \begin{align*}
  	\begin{split}
  		\Pro_{A}(s_{A} > n)  \leq \sum\limits_{i \in A} \Pro_{A}\left(\exists\, m\geq n:\, \Lambda_{i}(m)<0\right) + \sum\limits_{j \notin A} \Pro_{A}\left(\exists\, m\geq  n:\, \Lambda_{j}(m) \geq 0 \right),
  	\end{split}
  \end{align*}	
  
  (iii) if $|A|=\ell$,  then
  \begin{align*}
  	\begin{split}
  		\Pro_{A}(s_{A} > n)  \leq  \sum\limits_{j \notin A} \Pro_{A}\left(\exists\, m\geq n:\, \Lambda_{j}(m)\geq 0\right) + \sum\limits_{i \in A,\,j \notin A} \Pro_{A}(\exists\, m\geq n:\, \Lambda_{j}(m) \geq {\Lambda}_{i}(m)),
  	\end{split}
  \end{align*}
  
  (iv) if $|A|=u$, then
  \begin{align*}
  	\begin{split}
  		\Pro_{A}(s_{A} > n)  \leq  \sum\limits_{i \in A} \Pro_{A}\left(\exists\, m\geq  n:\, \Lambda_{i}(m)<0\right) +  \sum\limits_{i \in A,\,j \notin A} \Pro_{A}\left(\exists \, m\geq  n:\, \Lambda_{j}(m) \geq {\Lambda}_{i}(m)\right).
  	\end{split}
  \end{align*}
  By inspection of the formulas $x(A)$, $y(A)$ in Appendix \ref{app: defin x and y}, we observe that in case (i) it holds $x(A)>0$ or $y(A)>0$,  in case (ii) both $x(A), y(A) >0$, in case (iii) it holds $y(A)>0$, and in case (iv) $x(A)>0$. Therefore, in order to prove $\Exp_{A} \left[(s_{A})^z \right] < \infty$, it suffices to show the following:
  
  (a) If $x(A)>0$ then for all $i \in A$ and $j \notin A$,
  \begin{align*}
  	\sum_{n=1}^{\infty} n^{z-1}\, \Pro_{A}\left(\exists\, m\geq n:\, \Lambda_{j}(m) \geq {\Lambda}_{i}(m)\right)\ < \infty, \quad
  	\text{and} \quad \sum_{n=1}^{\infty} n^{z-1}\, \Pro_{A}\left(\exists\, m\geq n:\, \Lambda_{i}(m)<0\right) \, <\, \infty.
  \end{align*}
  
  (b) If $y(A)>0$ then for all $i \in A$ and $j \notin A$,
  \begin{align*}
  	\sum_{n=1}^{\infty} n^{z-1}\, \Pro_{A}\left(\exists\, m\geq n:\, \Lambda_{j}(m) \geq {\Lambda}_{i}(m)\right) <\, \infty,  \quad
  	\text{and} \quad  \sum_{n=1}^{\infty} n^{z-1}\, \Pro_{A}\left(\exists\, m\geq n:\, \Lambda_{j}(m) \geq 0\right) <\, \infty.
  \end{align*}
   We prove case (a), as the proof for (b) is similar. We fix $i \in A$, $j \notin A$. For every $\rho > 0$, we have   
  \begin{equation}\label{eqeq_p}
  	\begin{aligned}
  		\Pro_{A}\left(\exists \, m\geq n:\, \Lambda_{j}(m) \geq {\Lambda}_{i}(m)\right)=&\Pro_{A}\left(\exists \, m\geq n:\, \Lambda_{j}(m) \geq {\Lambda}_{i}(m),\, \pi_{i}(m)\geq \rho \right)\\
  		&+ \Pro_{A}\left(\exists\, m\geq n:\, \pi_{i}(m)< \rho \right),
  	\end{aligned}
  \end{equation}
  and
  \begin{equation}\label{e_q}
  	\begin{aligned}
  		\Pro_{A}\left(\exists\,  m\geq n:\,\Lambda_{i}(m)<0\right) =& \Pro_{A}\left(\exists\, m\geq n:\,\Lambda_{i}(m)<0,\,\pi_{i}(m)\geq \rho\right)\\
  		&+\Pro_{A}\left(\exists\, m \geq n :\, \pi_{i}(m)< \rho\right).
  	\end{aligned}
  \end{equation}
  In \eqref{eqeq_p} (resp. \eqref{e_q}), the first term on the right hand side is exponentially decaying by Lemma \ref{lambda_tau}(i). For the second term we observe that
  \begin{equation}\label{pir}
  	\begin{aligned}
  		\sum_{n=1}^{\infty} n^{z-1} \, \Pro_{A}(\exists \, m \geq n :\, \pi_{i}(m)< \rho) \leq \sum_{n=1}^{\infty} n^{z-1} \, \sum_{m=n}^{\infty} \Pro_{A}(\pi_{i}(m)< \rho)=\sum_{n=1}^{\infty}  n^{z}\, \Pro_{A}(\pi_{i}(n)< \rho).
  	\end{aligned}
  \end{equation}
  Thus, it suffices to find $\rho >0$ such that \eqref{pir} is summable, which is possible by assumption \eqref{series}.
\end{IEEEproof}

Proposition \ref{propo1} states that in order for $R$ to be $z$-quickly consistent under $\Pro_{A}$, it suffices to sample each source $i \in A$ when $x(A) >0$, and each source $j \in A^c$ when $y(A) >0$, with a small frequency $\rho$ in the long-run that can be much smaller than the $c^*_{i}(A)$ (resp. $c^*_{j}(A)$) required for asymptotic optimality. When $x(A)$ (resp. $y(A)$) is equal to $0$, it is not necessary to sample any source in $A$ (resp. $A^c$) to achieve consistency under $\Pro_{A}$. In the special setup where $\ell=u$, even if both $x(A)$, $y(A)$ are positive it suffices to sample with some small frequency $\rho$ only the sources in $A$, or only the sources in $A^c$. Therefore, in order for $R$ to be $z$-quickly consistent, the $\hat{N}$, $\check{N}$, $\hat{G}$, $\check{G}$ must be chosen such that the sources in $D$ (resp. $D^c$) are sampled at least with a small frequency when $x(D) >0$ (resp. $y(D)>0$), when $\Delta_n = D$.

\begin{theorem}\label{thm_c} 
	Suppose that the moment condition \eqref{mc1} holds for some $\mathfrak{p} \geq 2$. Let $R$ be an ordering sampling rule. If the functions  $\hat{N}$, $\hat{G}$ satisfy
	\begin{equation}\label{NGh_cons}
		\hat{N}(D) > |\hat{G}(D)|  
		\quad \mbox{for all } \; D \in \cP_{l,u} \; \mbox{ such that }\; \hat{G}(D) \neq D \quad \text{and} \quad x(D)>0,
	\end{equation}	
	and  the functions $\check{N}$, $\check{G}$ satisfy
	\begin{equation}\label{NGch_cons}
		\check{N}(D) >|\check{G}(D)|   \quad \mbox{for all } \; D \in \cP_{l,u} \; \mbox{ such that }\; \check{G}(D) \neq D^c \quad \text{and} \quad  y(D)>0, 
	\end{equation}	
	then there exist  $\rho \in (0,1)$ and $C>0$ such that for any $A \in \cP_{\ell,u}$,
	\begin{equation}\label{sh_c}
		\Pro_{A}\left(\pi_{i}(n) < \rho \right) \leq C \, n^{-\mathfrak{p}/2},  \qquad \forall \, n \in \bN,
	\end{equation}	
	for every $i \in A$ when  $x(A)>0$, and for every  $i \notin A$ when  $y(A)>0$.
\end{theorem}

\begin{IEEEproof}
We provide a sketch of the proof of Theorem \ref{thm_c}. The detailed proof is presented in Appendix \ref{consist_orde}. For the purposes of the sketch, we can assume w.l.o.g. that that we sample exactly $K$ sources at each time instant. For each set $V \subseteq [M]$, we consider the event $ \left\{ \Pi_{V}(n)<\rho \right\}:=\{ \pi_{i}(n)<\rho,\, \forall\, i \in V \}$, and we prove that for any $V \subseteq [M]$,
\begin{equation}\label{tsh00}
  \Pro_{A}\left( \Pi_{V}(n)<\rho \right) \leq C\, n^{-q/2}, \quad \mbox{a.s.} \quad \forall \, n \in \bN,
\end{equation}
or equivalently $\Pro_{A}\left( \Pi_{V}(n)<\rho \right)$ is $q/2$-polynomially decaying. For this we apply a backwards induction argument on the size of $V$. We start by proving the basis of induction, i.e. $V=[M]$, and for the induction step we assume that \eqref{tsh00} holds for all $V \subseteq [M]$ of size $|V| = v+1$, and for an appropriately chosen $\rho$ we show that \eqref{tsh00} holds for all  $V \subseteq [M]$ of size $|V| = v$. In this way, we end up proving \eqref{tsh00} for all singleton subsets of $[M]$, which is \eqref{sh_c}. For the basis of induction, we choose $\rho < K/M$, and we have
\begin{equation*}
  \Pro_{A}\left( \Pi_{V}(n)<\rho \right) \leq \Pro_{A}\bigg( \sum_{i=1}^{M} \pi_{i}(n) < K \bigg) =0,
\end{equation*}
where the right-hand side is equal to $0$ because we sample exactly $K$ sources at each time instant. The induction step is based on the observation that the event on which the statistics in $A\setminus V$ are positive and greater than those in $V$,  and  the statistics in $A^c \setminus V$ are negative and smaller than those in $V$, i.e., 
\begin{equation*}
	E_{A,V}(n):= \bigcap_{i \in A\setminus V} \bigcap_{j \in V}  \bigcap_{z \in  A^c \setminus V }\{ \Lambda_{i}(n) \geq \max\{0,\Lambda_{j}(n) \}  \geq \min\{ 0,\Lambda _{j}(n) \} \geq  \Lambda_{z}(n)  \},
\end{equation*}
has high probability if at time $n$ there  have been collected relatively few samples from sources in $V$ and relatively many samples from sources not in $V$. Since
\begin{equation*}
	\Pro_{A}\left( \Pi_{V}(n)<\rho \right) = \Pro_{A}\bigg( \Pi_{V}(n)<\rho, \bigcup_{m =\lceil n/2 \rceil}^{n} E^c_{A,V}(m)\bigg) + \Pro_{A}\bigg( \Pi_{V}(n)<\rho, \bigcap_{m =\lceil n/2 \rceil}^{n} E_{A,V}(m)\bigg),
\end{equation*}
it suffices to show that both terms on the right-hand side are $q/2$-polynomially decaying. In Appendix \ref{consist_orde}, we show that the first term is bounded by 
\begin{equation*}
\Pro_{A}\bigg( \Pi_{V}(n)<\rho, \bigcup_{m =\lceil n/2 \rceil}^{n} E^c_{A,V}(m)\bigg) \leq \Pro\bigg( \bigcup_{m =\lceil n/2 \rceil}^{n} \{ \Pi_{V}(m)<2\rho \} \cap E^{c}_{A,V}(m)\bigg),
\end{equation*}
and we prove that it is $q/2$-polynomially decaying because for each $m$ the event $\{ \Pi_{V}(m){<}2\rho \} \cap E^{c}_{A,V}(m)$ has small probability. For the second term, we prove that it is exponentially decaying because on the event $\left\{ \bigcap_{m =\lceil n/2 \rceil}^{n} E_{A,V}(m) \right\}$ the sources in $V$ are the ones with the smallest positive (resp. largest negative) LLRs for each $m \in [ n/2 , n]$, and by definition an ordering sampling rule $R$ samples at least one of the sources in $V$ at each time $m \in [ n/2 , n]$, which makes the probability of $\{ \Pi_{V}(n)<\rho \}$ very small.
\end{IEEEproof}

Next, we show that the conditions \eqref{NGh_cons}-\eqref{NGch_cons} suffice to guarantee the $z$-quick consistency of $R$.

\begin{corollary}\label{consi_deter}
	Suppose that the moment condition \eqref{mc1} holds for some $\mathfrak{p} > 4$ and that the conditions \eqref{NGh_cons}-\eqref{NGch_cons} hold, then $R$ is $z$-quickly consistent for any $z \in [1, \mathfrak{p}/2 -1)$.
\end{corollary}

\begin{IEEEproof}
 We fix $A \in \cP_{\ell, u}$, and $z {\in} [1, \mathfrak{p}/2 -1)$. In view of Proposition \ref{propo1}, it suffices to show that there is a $\rho  {\in} (0,1)$ such that \eqref{series} holds for every $i \in A$ when $x(A)>0$, and for every $i \notin A$ when $y(A)>0$, which encompasses the looser requirement we have for the case $\ell=u$. By Theorem \ref{thm_c}, it follows that there exist $\rho {\in} (0,1)$ and $C{>}0$ such that for every  $i {\in} A$ when $x(A){>}0$, and every  $i {\notin} A$ when  $y(A){>}0$,
 \begin{equation}
 	n^z \, \Pro_{A}\left( \pi^R_{i}(n)<\rho  \right) \leq C \, n^{z -\mathfrak{p}/2}, \quad \forall \, n \in \bN,
 \end{equation}	
 and since $z-\mathfrak{p}/2 < -1$ the bounding sequence is summable, which proves the claim. 
\end{IEEEproof}

\section{Asymptotic optimality}\label{asopt}
In this section, we prove Proposition \ref{prop_ord_ao} which describes the conditions that the functions $\hat{N}$, $\check{N}$, $\hat{G}$, $\check{G}$ must satisfy so that an ordering sampling rule $R$ is asymptotically optimal. Our assumption for the development of the results of this section is that $R$ is $z$-quickly consistent for all $z \in [1, \mathfrak{p}/2 - 1)$. Intuitively, when $R$ is $z$-quickly consistent and $A$ is the true subset of anomalies, then in the long run the rule $R$ will sample at each sampling instant all sources in $\hat{G}(A)$ (resp. $\check{G}(A)$) with probability $1$, and the expected number of sampled sources in $A \setminus \hat{G}(A)$ (resp. $A^c \setminus \check{G}(A)$) will be $\hat{N}(A)$ (resp. $\check{N}(A)$). Thanks to the symmetry of the problem (anomalous/regular sources), we can show the claim only for the sources in $A$, as the result for the sources in $A^c$ follows in the same way. The following theorem provides the conditions for the complete convergence of the empirical sampling frequencies under $\Pro_A$.

\begin{theorem}\label{pi_conv}
	Fix $A \in \cP_{\ell,u}$ and let $R$ be an ordering sampling rule that is $z$-quickly consistent under $\Pro_A$ for all $z \in [1, \mathfrak{p}/2 - 1)$. Suppose that the moment condition \eqref{mc1} holds for $\mathfrak{p} > 4$. 
    \begin{enumerate}[(i)]
    	\item Then, for all $i \in \hat{G}(A)$ it holds that $\pi_{i}(n) \to 1$, $\Pro_{A}$-completely.
    	
    	\item If $|A\setminus \hat{G}(A)|=1$ and $\hat{N}(A) - |\hat{G}(A)| < 1$, then for the single source $i \in A \setminus \hat{G}(A)$ it holds that $\pi_{i}(n) \to \hat{N}(A) - |\hat{G}(A)|$, $\Pro_{A}$-completely.
    	
    	\item If $|A\setminus \hat{G}(A)|\geq 2$, the $\hat{N}(A)$, $\hat{G}(A)$ satisfy
    	      \begin{equation}\label{mchat}
    	      	\hat{N}(A) - |\hat{G}(A)| < \sum_{i \in A \setminus \hat{G}(A)} I^*(A \setminus \hat{G}(A))/I_i,
    	      \end{equation}
    	      and the moment condition \eqref{mc1} holds for
    	      \begin{equation}\label{epp1}
    	      	\mathfrak{p} > \max \left\{8,3\cdot 2^{\lceil \hat{N}(A) \rceil - |\hat{G}(A)|-1}+1\right\},
    	      \end{equation}
    	      then $\pi_{i}(n) \to c_{i}(A)$, $\Pro_{A}$-completely for all $i \, \in \, A \setminus \hat{G}(A)$, and the $\{ c_{i}(A) \, :\, i \in A\setminus \hat{G}(A) \}$ satisfy
    	      \begin{equation}\label{i0}
    	      	\begin{aligned}
    	      		c_{i}(A)\,I_{i}=c_{j}(A)\,I_{j}, \; \forall \, i,j\, \in A \setminus \hat{G}(A),\quad \mbox{and} \quad
    	      		\sum_{ i \in A \setminus \hat{G}(A)}c_{i}(A) =\hat{N}(A)-|\hat{G}(A)|.
    	      	\end{aligned}
    	      \end{equation}
    \end{enumerate}
\end{theorem}

\begin{IEEEproof}
The proof of Theorem \ref{pi_conv} is given in the end of Subsection \ref{stab_ord}.
\end{IEEEproof}  

We note that when $|A\setminus \hat{G}(A)|=1$ the right-hand side of condition \eqref{mchat} is equal to $1$, and thus \eqref{mchat} reduces to the requirement of case (ii). However, the proof of case (iii) requires the stronger moment condition \eqref{epp1}. Theorem \ref{pi_conv} provides conditions for the complete  convergence under $\Pro_A$ of the empirical sampling frequency of each source $i$ to a number $c_i(A)$ that is not necessarily greater than or equal to the value $c_i^*(A)$ that is required for asymptotic optimality under $\Pro_A$ according to Proposition \ref{crit:AO}. If we further impose the following lower bound on $\hat{N}(A)-|\hat{G}(A)|$, i.e.,
\begin{equation*}
  \sum_{i \in A \setminus \hat{G}(A)} c^{*}_{i}(A) \leq \hat{N}(A)-|\hat{G}(A)|,
\end{equation*}
then by the property \eqref{A2} for $\{ c^{*}_{i}(A) \,:\, i \in A \setminus \hat{G}(A) \}$, and relation \eqref{i0} for $\{ c_{i}(A) \, :\, i \in A \setminus \hat{G}(A) \}$, we deduce that $c_{i}(A) \geq c^*_{i}(A)$ for all $i \in A \setminus \hat{G}(A)$. We proceed to the proof of Proposition \ref{prop_ord_ao}.

\begin{IEEEproof}[Proof of Proposition \ref{prop_ord_ao}]
It suffices to show that under conditions \eqref{ng_shat}-\eqref{ng_scheck} the rule $R$ is asymptotically optimal for any $A \in \cP_{\ell, u}$. Let us fix $A \in \cP_{\ell, u}$. By Theorem \ref{pi_conv}, in order to show that $\pi_{i}(n) \to c_i(A)$ $\Pro_{A}$-completely, it suffices to show that $R$ is $z$-quickly consistent for all $z \in [1, \mathfrak{p}/2 - 1)$. Then, by the lower bound in \eqref{ng_shat}, the property \eqref{A2} and the relation \eqref{i0} we deduce that $c_{i}(A) \geq c^*_{i}(A)$ for all $i \in A \setminus \hat{G}(A)$. In order to show that $R$ is $z$-quickly consistent for all $z \in [1, \mathfrak{p}/2 - 1)$, we observe that for any  $D \in \cP_{\ell, u}$ such that $\hat{G}(D) \neq D$ and $x(D)>0$, it holds $c^*_{i}(D) > 0$ for all $i \in D$ by \eqref{xy}, which further implies that the lower bound in \eqref{ng_shat} is positive, i.e., $\hat{N}(D)-|\hat{G}(D)| >0$ (resp. $\check{N}(D)-|\check{G}(D)| >0$). Therefore, the conditions \eqref{NGh_cons}, \eqref{NGch_cons} are satisfied, and by Corollary \ref{consi_deter} we deduce the claim.
\end{IEEEproof}

\subsection{Stabilized ordering sampling rules}\label{stab_ord}
One of the requirements of Theorem \ref{pi_conv} is that the ordering rule $R$ is $z$-quickly consistent for all $z \in [1, \mathfrak{p}/2 - 1)$ for sufficiently large $\mathfrak{p}$. Under this assumption, we can show that the empirical sampling frequency of a source $i \in A$ for rule $R$, i.e., $\pi^{R}_{i}(n)$, converges $\Pro_{A}$-completely to the same limit as that of a \textit{stabilized} ordering sampling rule that behaves in the same way as $R$ on the event $\{ \Delta^{R}_{n} = A, \;  \forall \, n \in \bN \}$. In this subsection, we fix $A \in \cP_{\ell, u}$, and an ordering sampling rule $R$. We also restore the superscript $R$ to point out the dependence of the respective quantities on $R$. 

By the definition of an ordering sampling rule it follows that for every $n \in \bN_{0}$, $R(n+1)$ is conditionally independent of $\cF^{R}_{n}$ given the estimate of anomalies $\Delta^R_n$, and the vector $\mathbf{w}^R(n):=( w_{1}^{R}(n), \ldots, w^{R}_{M}(n))$ defined in \eqref{w_ord} which keeps the ordering of the LLRs $\{\Lambda_i^{R}(n) \, :\, i \in [M]\}$. By \cite[Prop. 8.20]{kal02}, the former property implies that there is a measurable function $h\,:\, \mathcal{P}_{\ell,u}\times \mathfrak{s}_{M} \times [0,1]^2\to 2^{[M]}$, where $\mathfrak{s}_{M}$ is the set of all permutations of $[M]$, such that
\begin{equation}\label{repre}
	R(n+1)=h \left(\Delta^R_n,\mathbf{w}^{R}(n), \mathbf{Z}_{n} \right), \quad \forall \; n \in \bN_{0},
\end{equation}
where $\mathbf{Z}_{n}:=(\hat{Z}_{n},\check{Z}_{n})$ contains the two independent Uniform[0,1] random variables used for randomization purposes. We define the stabilized ordering rule $R^{A,m}$ as a rule which is identical to $R$ after time $m$ on the event $\{s_{A}=m\} = \{ \Delta_u^R = A, \; \forall \, u \geq m\}$, i.e., $\Delta_u^R$ is ``stabilized" to $A$ for all $u \geq m$. Specifically,
\begin{equation}\label{repreA}
 R^{A,m}(u+1) := h \left(A,\mathbf{w}^{A,m}(u), \mathbf{Z}_{m+u} \right), \quad \forall \; u \in \bN_{0},
\end{equation}
where $\mathbf{w}^{A,m}(u)$ keeps the ordering of the LLRs $\{ \Lambda_i^{A,m}(u) \, :\, i \in [M] \}$ defined as
\begin{equation}\label{llrA}
\Lambda_i^{A,m}(u):= \Lambda^{R}_i(m) + \sum_{k=1}^u  \log \left( \frac{f_{1i} (X_i(m+k))}{ f_{0i} (X_i(m+k))} \right) \, R^{A,m}_i(k), \quad i \in [M],
\end{equation}
where we perform sampling according to $R^{A,m}$, and the $\{\Lambda^{R}_i(m) : i \in [M]\}$ play the role of the initial values of $\{ \Lambda_i^{A,m}(u) \, :\, i \in [M] \}$ for $u=0$. By the definition of $R^{A,m}$, we deduce that on the event $\{s_{A} \leq m\}$ the sampling rule $R$ is equal to the rule $R^{A,m}$ at any time $n \geq m$,
\begin{equation*}
 \begin{aligned}
  \{s_{A} \leq m\} \subseteq \{\Delta^{R}_{n} = A, \;  \forall \, n\geq m  \} \subseteq \{R(n)= R^{A,m}(n-m), \;  \forall \, n\geq m  \}.
 \end{aligned}
\end{equation*}
In the following proposition, we show that if $R$ is $z$-quickly consistent for all $z \in [1, \mathfrak{p}/2 - 1)$, and the ``stabilization" happens early compared to $n$, i.e., $m << n$, then $\pi^{R}_{i}(n)$ is approximately equal to $\pi^{A,m}_{i}(n-m)$ for large $n$, where 
\begin{equation}\label{piA}
 \pi_i^{A,m}(u):= \frac{1}{u} \sum_{k=1}^u  R^{A, m}_i(k), \quad u \in \bN,
\end{equation}
is the empirical sampling frequency for the rule $R^{A,m}$. In particular, we consider the increasing sequence of integers $\{\zeta_n : n \in \bN\}$ such that 
\begin{equation}\label{zn1}
 \zeta_n \leq n, \;\; \forall \, n \in \bN, \quad  \zeta_n/n \to 0, \quad \mbox{and} \quad \exists \; z \in [1, \mathfrak{p}/2 - 1)\; \mbox{ s.t. }\; \sum_{n=1}^\infty \frac{1}{\zeta_n^{z}} < \infty
\end{equation}
e.g., $\zeta_n = \lceil n^a \rceil$ for some $a \in (1/z,1)$, and we prove the following result.

\begin{proposition}\label{pr_tpa}
Suppose that the moment condition \eqref{mc1} holds for $\mathfrak{p} > 4$, and let $R$ be a $z$-quickly consistent under $\Pro_A$ for all $z \in [1, \mathfrak{p}/2 - 1)$. Then,
\begin{align*}
	\left|\pi^{R}_i(n)  -\pi_i^{A, \zeta_n}(n-\zeta_n) \right|  \to 0 \quad \Pro_A-\text{completely}, \quad \forall \; i \in [M],
\end{align*}
where $\{\zeta_n : n \in \bN\}$ is an increasing sequence of integers that satisfies \eqref{zn1}.
\end{proposition}

\begin{IEEEproof}
Let us fix $\epsilon >0$. By Boole’s inequality,
\begin{equation*}
 \begin{aligned}
   \Pro_{A}\left(|\pi^{R}_i(n)  -\pi_i^{A, \zeta_n}(n-\zeta_n)| > \epsilon \right) \leq \Pro_{A}\left(|\pi^{R}_i(n)  -\pi_i^{A, \zeta_n}(n-\zeta_n)| > \epsilon, s_A < \zeta_n \right) + \Pro_{A}(s_A \geq \zeta_n ).
 \end{aligned}
\end{equation*}
By Markov's inequality, for any $z \in [1, \mathfrak{p}/2 - 1)$ such that \eqref{zn1} holds, we have
\begin{equation*}
 \Pro_{A}(s_A \geq \zeta_n ) \leq \frac{\Exp_{A}[s^z_A]}{\zeta^{z}_n}
\end{equation*}
which is summable because $\Exp_{A}[s^z_A] < \infty$, and \eqref{zn1}. Therefore, in order to show the claim it suffices to show that $\{ \Pro_{A}(|\pi^{R}_i(n)  -\pi_i^{A, \zeta_n}(n-\zeta_n)| > \epsilon, s_A < \zeta_n) \, :\, n \in \bN \}$ is summable. We observe that 
\begin{equation*}
   \pi^{R}_i(n) = \frac{1}{n}  \sum_{u=1}^{n} R_i(u)=  
   \frac{1}{n}\sum_{u=1}^{\zeta_n} R_i(u) + \left( 1- \frac{\zeta_n}{n}  \right) \; \frac{1}{n-\zeta_n}
   \sum_{u=\zeta_n+1}^{n} R_i(u)
\end{equation*}
and on the event $\{ s_A < \zeta_n \}$, it holds 
\begin{equation*}
\frac{1}{n-\zeta_n}\sum_{u=\zeta_n+1}^{n} R_i(u)=\frac{1}{n-\zeta_n}\sum_{u=1}^{n-\zeta_n} R^{A, \zeta_n}_i(u)= \pi_i^{A, \zeta_n}(n-\zeta_n)
\end{equation*}
which implies that
\begin{equation}\label{zetn}
  \begin{aligned}
  	|\pi^{R}_i(n) - \pi_i^{A, \zeta_n}(n-\zeta_n)| \leq \frac{\zeta_n}{n} + \frac{\zeta_n}{n}\pi_i^{A, \zeta_n}(n-\zeta_n) \leq \frac{2\zeta_n}{n},
  \end{aligned}
\end{equation}
where for the first inequality we used the fact that $R_i(u) \leq 1$ for all $u \in [1, \zeta_n]$. Therefore,
\begin{equation*}
\Pro_{A}\left(|\pi^{R}_i(n)  -\pi_i^{A, \zeta_n}(n-\zeta_n)| > \epsilon, s_A < \zeta_n \right) \leq \Pro_{A}\left(\frac{\zeta_n}{n}  > \epsilon/2 \right),
\end{equation*}
and by \eqref{zn1} there is $\mathcal{M}>0$ such that for all $n \geq \mathcal{M}$ it holds $\zeta_n/n  < \epsilon/2$, which proves the claim.
\end{IEEEproof}

Proposition \ref{pr_tpa} suggests that in order to prove Theorem \ref{pi_conv}, it suffices to show that $\pi_i^{A, \zeta_n}(n-\zeta_n) \to c_i(A)$ $\Pro_A$-completely, for all $i \in A$. The first two cases are simpler, and we show them first.

\begin{IEEEproof}[Proof of Theorem \ref{pi_conv} (i), (ii)]
For case (i), by definition the sampling rule $R^{A,\zeta_n}$ samples for sure each source $i \in \hat{G}(A)$ at each instant $m \in [\zeta_n, n]$, which implies that $\pi_i^{A, \zeta_n}(n-\zeta_n)=1$ for all $n \in \bN$, for each $i \in \hat{G}(A)$. For case (ii), by definition the sampling rule $R^{A,\zeta_n}$ samples the single source $i \in A \setminus \hat{G}(A)$ with probability $\hat{N}(A) - |\hat{G}(A)|$ at each instant $m \in [\zeta_n, n]$, i.e.,
\begin{equation*}
\pi_i^{A, \zeta_n}(n-\zeta_n) = \frac{1}{n-\zeta_n}	\sum_{u=\zeta_n +1}^{n} \mathbf{1}\left\{ \hat{Z}_u \leq \hat{N}(A)- |\hat{G}(A)|\right\},
\end{equation*}
and since $\{ \hat{Z}_u \,: \, u \in \bN_0 \}$ is a sequence of iid Uniform[0,1] random variables, by Chernoff bound we show that $\pi_i^{A, \zeta_n}(n{-}\zeta_n) {\to} \hat{N}(A){-} |\hat{G}(A)|$, $\Pro_A$-completely, which by Proposition \ref{pr_tpa} proves the claim.
\end{IEEEproof}

For part (iii) of Theorem \ref{pi_conv}, the property \eqref{i0} suggests that we need to show that for all $i,j \in A \setminus \hat{G}(A)$,
\begin{equation*}
 \Big|I_i \,\pi_i^{A, \zeta_n}(n-\zeta_n) -  I_j \, \pi_j^{A, \zeta_n}(n-\zeta_n) \Big| \to 0  \quad \Pro_{A}-\mbox{completely}.
\end{equation*}
Since $\Lambda_i^{A,\zeta_n}(n-\zeta_n) \simeq (n-\zeta_n) I_i  \pi_i^{A, \zeta_n}(n-\zeta_n)$ for large $n$, it suffices to show that
\begin{equation*}
	\frac{1}{n-\zeta_n} \,  |\Lambda^{A,\zeta_n}_{i} (n-\zeta_n)  - \Lambda^{A,\zeta_n}_j (n-\zeta_n) | \to 0 \quad \Pro_A- \text{completely} \quad \forall \; i, j \in	A \setminus \hat{G}(A).
\end{equation*}
The key step towards that direction is to show that the LLRs of the rule $R^{A,m}$ of any two sources in $A \setminus \hat{G}(A)$ stay close, as described in the following theorem. This property follows from the fact that an ordering sampling rule prioritizes the sampling of the sources with small LLRs among those in $A \setminus \hat{G}(A)$.  

\begin{theorem}\label{prV}
	Let $A \in \cP_{\ell, u}$, $m \in \bN$, and let $R$ be an ordering sampling rule. Suppose that $A \setminus \hat{G}(A) \geq 2$, the $\hat{N}(A)$, $\hat{G}(A)$ satisfy \eqref{mchat}, and the moment condition \eqref{mc1} holds for 
	\begin{equation}\label{pfrk}
	 \mathfrak{p}>3\cdot 2^{\lceil \hat{N}(A) \rceil - |\hat{G}(A)|-1}+1.
	\end{equation}
	If 
	\begin{equation}\label{ccd}
		\max_{i,j \in A \setminus \hat{G}(A)} |\Lambda_i^R(m) - \Lambda_j^R(m)| \in \mathcal{L}^{\mathfrak{p}-1},
	\end{equation}
	then there exists a strictly increasing sequence of $\Pro_{A}$-a.s. finite stopping times $\{ \sigma_l\,:\, l \in \bN_{0}\}$, with $\sigma_0:=0$, such that for the distance of the LLRs of $R^{A,m}$ during each $[\sigma_l,\sigma_{l+1})$, i.e.,
	\begin{equation}\label{i1}
		V_l := \max\limits_{\sigma_l\leq u< \sigma_{l+1}} \, \max\limits_{i,j\in A\setminus \hat{G}(A)}\big{|}\Lambda^{A,m}_{i}(u)-\Lambda^{A,m}_{j}(u)\big{|} , \quad \; l  \in \bN_{0},
	\end{equation}
	there is a constant $C>0$ independent of $m,l$ such that
	\begin{equation}\label{vlth}
	 \sup_{l \in \bN_0} \Exp_A \left[ V^{3+\theta}_l \right] \leq C\, \left(1 + \Exp_A \left[  \max_{i,j \in A \setminus \hat{G}(A)} \left| \Lambda^R_i(m) - \Lambda^R_j(m) \right|^{3+\theta^+} \right] \right)
	\end{equation}
	for any $\theta ,\theta^+ >0$, such that $\theta < \theta^+ < (\mathfrak{p}-1)/2^{\lceil \hat{N}(A) \rceil - |\hat{G}(A)|-1} -3$.
\end{theorem}

\begin{IEEEproof}
 The proof is presented in Appendix \ref{vlbg11}. Since $R$ prioritizes the sampling of the sources with small LLRs, we provide a recursive definition of $\{ \sigma_l\,:\, l \in \bN_{0}\}$, which enables us to prove the claim.
\end{IEEEproof}

For the following lemma, we fix $\theta ,\theta^+$ such that 
\begin{equation*}
0<\theta < \theta^+ < (\mathfrak{p}-1)/2^{\lceil \hat{N}(A) \rceil - |\hat{G}(A)|-1} -3,\;\; \mbox{ and }\;\; 3>(3+\theta^+)/(1+\theta),
\end{equation*}
and we impose the following stronger assumption on $\{\zeta_n : n \in \bN\}$, which implies \eqref{zn1}, i.e.,
\begin{equation}\label{zn2}
	\zeta_n \leq n, \;\; \forall \, n \in \bN, \quad \sum_{n=1}^{\infty} \frac{\zeta_n^{3+\theta^+}}{n^{2+\theta}} < \infty, \quad \mbox{and} \quad \exists \; z \in [1, \mathfrak{p}/2 - 1)\; \mbox{ s.t. }\; \sum_{n=1}^\infty \frac{1}{\zeta_n^{z}} < \infty.
\end{equation}

\begin{lemma}\label{sublin}
Let $A \in \cP_{\ell, u}$, and let $R$ be a $z$-quickly consistent ordering rule under $\Pro_A$ for all $z \in [1, \mathfrak{p}/2 - 1)$. Suppose that $|A \setminus \hat{G}(A)| \geq 2$, the $\hat{N}(A)$, $\hat{G}(A)$ satisfy \eqref{mchat}, and the moment condition \eqref{mc1} holds for \eqref{epp1}. Then, for $\{\zeta_n : n \in \bN\}$ that satisfies \eqref{zn2}, it holds
\begin{equation} \label{close}
	\frac{1}{n-\zeta_n} \,  |\Lambda^{A,\zeta_n}_{i} (n-\zeta_n)  - \Lambda^{A,\zeta_n}_j (n-\zeta_n) | \to 0 \quad \Pro_A- \text{completely} \quad \forall \; i, j \in	A \setminus \hat{G}(A).
\end{equation} 
\end{lemma}

\begin{IEEEproof}
By \eqref{epp1}, we have $\mathfrak{p}>8 \Leftrightarrow \mathfrak{p}/2 - 1 >3$, and thus we can fix $z \in ((3+\theta^+)/(1+\theta) , 3)$. The sequence $\{ \zeta_n = \lceil n^{\delta} \rceil \, :\, n \in \bN \}$, where $\delta \in (1/z, (1+\theta)/(3+\theta^+))$ satisfies \eqref{zn2}. To show \eqref{close}, we fix $\epsilon >0$, and $n \in \bN$. By Markov's inequality, we have
\begin{equation}\label{o}
 \Pro_{A}\left( \frac{|\Lambda^{A,\zeta_n}_{i} (n-\zeta_n) - \Lambda^{A,\zeta_n}_j (n-\zeta_n)|}{n-\zeta_n} > \epsilon \right) \leq \frac{\Exp_{A}\left[\big{|}\Lambda^{A,\zeta_n}_{i}(n-\zeta_n)-\Lambda^{A,\zeta_n}_{j}(n-\zeta_n)\big{|}^{3+\theta} \right]}{ \epsilon^{3+\theta} (n-\zeta_n)^{3+\theta}},
\end{equation}
and it suffices to provide an upper bound on the expectation so that the right-hand side is summable. In view of Theorem \ref{prV}, we first note that condition \eqref{ccd} is satisfied, because by Lemma \ref{lem:C5} there is a constant $C_0 >0$ such that for all $n \in \bN$,
\begin{equation*}
\Exp_{A}\left[\left|\Lambda_{i}^{R}(\zeta_n)-\Lambda_{j}^{R}(\zeta_n)\right|^{\mathfrak{p}-1}\right] \leq C_0 \, (\zeta_n)^{\mathfrak{p}-1} < \infty.
\end{equation*}
Therefore, by definition of $\{V_l \, :\, l \in \bN \}$ in \eqref{i1}, we deduce that
\begin{equation*}
|\Lambda^{A,\zeta_n}_{i} (n-\zeta_n) - \Lambda^{A,\zeta_n}_j (n-\zeta_n)| \leq V_{l^*_n},
\end{equation*}
where $l^*_n$ is the increasing number of the interval $[\sigma_l, \sigma_{l+1})$ where $n-\zeta_n$ belongs, i.e. $l^*_n:=\max\{l\in \bN_0 :\; \sigma_l \leq n-\zeta_n\}$. Since $l^*_n \leq n-\zeta_n$ a.s., we further have
\begin{equation}\label{isx}
	\begin{aligned}
		\Exp_{A}\left[\big{|}\Lambda^{A,\zeta_n}_{i}(n{-}\zeta_n){-}\Lambda^{A,\zeta_n}_{j}(n{-}\zeta_n)\big{|}^{3+\theta}\right] 
		&{\leq} \sum_{l=0}^{n-\zeta_n} \Exp_{A}\left[V^{3+\theta}_l\right]\\
		&{\leq} (n{-}\zeta_n)C\left(1{+}E_A \big[\max_{i,j \in A \setminus \hat{G}(A)} \left| \Lambda_i^R(\zeta_n) {-} \Lambda_j^R(\zeta_n) \right|^{3+\theta^+} \big] \right) \\
		&{\leq} (n{-}\zeta_n)C\left(1+\, C_0\, \zeta^{3+\theta^+}_n \right),
	\end{aligned}
\end{equation}
where the second inequality follows by Theorem \ref{prV}, and the third by Lemma \ref{lem:C5}, and $C,C_0 >0$ are constants independent of $n$. The summability of \eqref{o} follows by assumption \eqref{zn2}.
\end{IEEEproof}

Based on Lemma \ref{sublin}, we provide the proof of Theorem \ref{pi_conv}(iii).

\begin{IEEEproof}[Proof of Theorem \ref{pi_conv}(iii)]
	In view of Proposition \ref{pr_tpa}, it suffices to that 
		\begin{equation}
		\pi_i^{A,\zeta_n}(n - \zeta_n) \to c_i(A) \quad \Pro_A\text{-completely},
	\end{equation}
	where $\{c_{i}(A)\,:\, i \in A \setminus \hat{G}(A)\}$ are given by \eqref{i0}, and $\{\zeta_n \, :\, n \in \bN \}$ satisfies \eqref{zn2}. By the Definition \ref{definition_ordering} of an ordering rule, we observe that for the rule $R^{A,\zeta_n}$, for every $n \in \bN$ we have  
	\begin{equation}\label{su_con}
		\sum_{i \in A \setminus \hat{G}(A)} \pi_i^{A,\zeta_n}(n-\zeta_n) =  \lfloor \hat{N}(A) \rfloor-|\hat{G}(A)| + \frac{1}{n-\zeta_n} \sum_{u=1}^{n-\zeta_n} \mathbf{1}\left\{ \hat{Z}_{u+\zeta_n} \leq \hat{N}(A)-\lfloor \hat{N}(A) \rfloor \right\}.
	\end{equation}
	Since  $\{\hat{Z}_m\,:\, m \in \bN_0\}$ is a sequence of iid Uniform[0,1] random variables, by the Chernoff bound it follows that the average term converges completely under $\Pro_A$ to  $\hat{N}(A)  - \lfloor \hat{N}(A) \rfloor$, and consequently,
	\begin{equation}\label{n}
		\sum_{ i \in A \setminus \hat{G}(A)} \pi_i^{A,\zeta_n}(n-\zeta_n) \to \hat{N}(A)-|\hat{G}(A)| \quad \Pro_{A}-\mbox{completely}.
	\end{equation}
	Therefore, to prove the claim it remains to show that for any $\epsilon>0$ and  $i,j \in A \setminus \hat{G}(A)$, the sequence
	\begin{equation*}
		\left\{\Pro_{A}\left(|I_{i}\pi_i^{A,\zeta_n}(n-\zeta_n)-I_{j}\pi^{R^{A,\zeta_n}}_{j}(n-\zeta_n)|> \epsilon \right)\,:\; n \in \bN \right\} 
	\end{equation*}
	is summable. We fix $\epsilon >0$, $i,j \in A \setminus \hat{G}(A)$, and for any $n \in \bN$ we have
	\begin{equation}\label{ieq_decomp_2}
		\begin{aligned}
			\Pro_{A}\left(|I_{i}\pi_{i}^{A,\zeta_n}(n{-}\zeta_n){-}I_{j}\pi_{j}^{A,\zeta_n}(n{-}\zeta_n)|{>}\epsilon\right)
			\leq & \Pro_{A}\left(|{\Lambda}^{A,\zeta_n}_{i}(n{-}\zeta_n){-}{\Lambda}^{A,\zeta_n}_{j}(n{-}\zeta_n)|{>}\frac{\epsilon}{2}  (n-\zeta_n)  \right)\\
			&+\Pro_{A}\left(|\widetilde{\Lambda}^{A,\zeta_n}_{i}(n{-}\zeta_n){-}\widetilde{\Lambda}^{A,\zeta_n}_{j}(n{-}\zeta_n)|{>}\frac{\epsilon}{2} (n{-}\zeta_n) \right),
		\end{aligned}
	\end{equation}
	where
	\begin{equation*}
	 \widetilde{\Lambda}^{A,\zeta_n}_k(n-\zeta_n) := \Lambda^{A,\zeta_n}_k(n-\zeta_n) - (n-\zeta_n) I_i \, \pi^{A,\zeta_n}_k(n-\zeta_n),\quad \forall\, k \in A \setminus \hat{G}(A).
	\end{equation*}
	The first term of \eqref{ieq_decomp_2} is summable by Lemma \ref{sublin}, and the second by Lemma \ref{tilA} since
	\begin{equation*}
	 \begin{aligned}
	 &\Pro_{A}\left(|\widetilde{\Lambda}^{A,\zeta_n}_{i}(n-\zeta_n)-\widetilde{\Lambda}^{A,\zeta_n}_{j}(n-\zeta_n)|>\frac{\epsilon}{2} \, (n-\zeta_n) \right)\\
	 &\leq \Pro_{A}\left(|\widetilde{\Lambda}^{A,\zeta_n}_{i}(n-\zeta_n)|>\frac{\epsilon}{4} \, (n-\zeta_n) \right) + \Pro_{A}\left(|\widetilde{\Lambda}^{A,\zeta_n}_{j}(n-\zeta_n)|>\frac{\epsilon}{4} \, (n-\zeta_n) \right).
	 \end{aligned}
	\end{equation*}
\end{IEEEproof}

\section{Comparison with existing ordering sampling rules}\label{sec:special}
In this section, we compare our default ordering sampling rule introduced in Subsections \ref{default}, with the ordering rules presented in \cite{Cohen2015active}, \cite{huang2017active}. In both papers \cite{Cohen2015active}, \cite{huang2017active}, the authors assume that the number of anomalous sources is known a priori, i.e., $\ell =u$, and that $K$ is an integer, and they focus their analysis on the special case where $\ell =u=1$, and $K=1$. In \cite{Cohen2015active}, the authors assume homogeneous sources, i.e.,
\begin{equation}\label{hmg}
	I_i = I \qquad \text{ and }  \qquad J_i = J, \qquad \forall \; i \in [M],
\end{equation}
and for any integer $K \geq 1$, they introduce an asymptotically optimal ordering rule. In \cite{huang2017active}, they consider heterogeneous sources, i.e., \eqref{hmg} does not hold, and for $K=1$ they introduce an asymptotically optimal ordering rule, whereas for any integer $K>1$ they provide a conjecture \cite[(39)]{huang2017active}. 

For $\ell=u$, and $K$ integer in the homogeneous setup, and for $\ell=u=1$ in the heterogeneous setup, we do not need to define a set of sources that are sampled with probability $1$ because for all $D \in \cP_{\ell, u}$ either $\hat{G}(D)=\emptyset$ or $\hat{N}(D)=|D| \Leftrightarrow \hat{G}(D)=D$ (resp. for $\check{G}(D)$), i.e.,
\begin{equation}\label{ghd}
\hat{G}(D) \in \{\emptyset, D\}, \quad \check{G}(D) \in \{\emptyset, D^c\}, \quad \forall \, D \in \cP_{\ell, u},
\end{equation}
and there is no need for randomization because $\hat{N}(D)$, $\check{N}(D)$ are integers for all $D \in \cP_{\ell, u}$, as shown in the following subsections. In view of Definition \ref{definition_ordering} our ordering sampling rule becomes
\begin{equation}\label{rn1}
 R(n+1) = \{ w_{\ell -\hat{N}(\Delta_n) +1}(n),\ldots, w_{\ell +\check{N}(\Delta_n)}(n) \}, \quad n \in \bN_0,
\end{equation}
where $\Delta_n$ is defined in \eqref{gap_decision rule}, which means that at time $n+1$ we sample the sources with the $\hat{N}(\Delta_n)$ smallest LLRs in $\Delta_n$, and the $\check{N}(\Delta_n)$ largest LLRs in $\Delta^c_n$. We show that the ordering sampling rule suggested in \cite{Cohen2015active}, \cite{huang2017active} coincides with \eqref{rn1} in each case.

\subsubsection{Homogeneous setup}
In case $\ell=u$, $K$ is an integer, and \eqref{hmg} holds, then for all $D \in \cP_{\ell, u}$ by the form of $x(D)$, $y(D)$ in \eqref{kn_c1}-\eqref{kn_c2} and for the default choice \eqref{N_default} for $\hat{N}(D)$, $\check{N}(D)$ we have
\begin{itemize}
	\item if  $(M-\ell) I \geq  J \ell$, then $\hat{N}(D)=\min \{K, \ell\}$ and $\check{N}(D)=(K-\ell)^+$,
	\item if   $(M-\ell) I <  J \ell$,  then $\hat{N}(D)= (K-M+\ell)^+$ and $\check{N}(D)= \min \{K, M-\ell\}$,
\end{itemize}
and our rule \eqref{rn1} takes the following form,
\begin{itemize}
	\item  if  $(M-\ell) I \geq  J \ell$, then 
	\begin{equation}\label{r1}
		R(n+1)=
		\begin{cases}
			\{ w_{l-K+1}(n), \ldots,w_{l}(n) \}, \quad &\mbox{if } K < \ell,\\
			\{ w_{1}(n), \ldots, w_{K}(n) \}, \quad &\mbox{if } K \geq \ell.
		\end{cases}
	\end{equation} 
	which means that when $K < \ell$, our rule $R$ samples the $K$ sources in $\Delta_n$ with the smallest LLRs, and when $K\geq \ell$, it samples all sources in $\Delta_n$ and the $K-\ell$ sources in $\Delta^c_n$ with the largest LLRs.
	
	\item If  $(M-\ell) I <  J \ell$, then 
	\begin{equation}\label{r2}
		R(n+1)= \begin{cases}
			\{ w_{l+1}(n), \ldots,w_{l+K}(n) \}, \quad &\mbox{if } K \leq M-l,\\
			\{ w_{M-K+1}(n),\ldots,w_{M}(n) \}, \quad &\mbox{if } K > M-l,
		\end{cases}
	\end{equation} 
	which means that when $K {\leq} M {-} \ell$, $R$ samples the $K$ sources in $\Delta^c_n$ with the largest LLRs, and when $K {>} M{-}\ell$, it samples all sources in $\Delta^c_n$ and the  $K {-} (M {-} \ell)$ sources in $\Delta_n$ with the smallest LLRs.
\end{itemize}
The rule $R$ in \eqref{r1}-\eqref{r2} is the same as the ordering rule presented in \cite[(17)-(18)]{Cohen2015active}.

\subsubsection{Heterogeneous setup}
In case $\ell=u=1$, and \eqref{hmg} does not hold, we consider the subcases $K=1$, and $K>1$ separately. In subcase $K=1$, for any $D \in \cP_{\ell, u}$ by the form of $x(D)$, $y(D)$ in \eqref{kn_c1}-\eqref{kn_c2} and for the default choice \eqref{N_default} for $\hat{N}(D)$, $\check{N}(D)$ we have
\begin{equation*}
\begin{aligned}
	\hat{N}(D) &=1 \quad \text{and} \quad \check{N}(D)=0, \quad \text{if} \quad 
	\sum_{i \in D} 1/I_i \leq 	 \sum_{i \notin D} 1/J_i, \\
	\hat{N}(D) &=0 \quad \text{and} \quad \check{N}(D)=1,  \quad \text{otherwise},
\end{aligned}
\end{equation*}
and our rule \eqref{rn1} takes the following form,
\begin{equation}\label{r4}
	R(n+1)=
	\begin{cases}
		w_{1}(n), \quad &\mbox{if } \sum_{i \in D} 1/I_i \leq 	 \sum_{i \notin D} 1/J_i,\\
		w_{2}(n), \quad &\mbox{otherwise,} 
	\end{cases}
\end{equation}  
which means that when $\sum_{i \in D} 1/I_i \leq \sum_{i \notin D} 1/J_i$, our rule $R$ samples the single source in $\Delta_n$, otherwise it samples the source in $\Delta^c_n$ with the largest LLR. The rule $R$ in \eqref{r4} is the same as the one in \cite[(12)]{huang2017active}.

For $K>1$, the authors in \cite[(39)]{huang2017active} conjecture that, if for all $D \in \cP_{\ell, u}$, $\hat{N}(D)$, $\check{N}(D)$ are integers, then the rule \eqref{rn1} is asymptotically optimal. Indeed, if we further assume \eqref{ghd}, then their conjecture is true, however their assumption is very restrictive and in most cases it is not satisfied by heterogeneous sources. In our design asymptotic optimality is achieved, even if $\hat{N}(D)$, $\check{N}(D)$ are not integers for some $D \in \cP_{\ell, u}$ thanks to the randomization introduced by $\hat{Z}_n$, $\check{Z}_n$.

\section{Simulation Study}\label{sec: simulations}
In this section, we present a simulation study in which we compare the expected stopping time, i.e., $E_{A}[T]$, of the default ordering sampling rule presented in Subsection \ref{default} where  $\hat{N}, \check{N}$, $\hat{G}, \check{G}$ satisfy \eqref{N_default}, \eqref{G_default}, with the expected stopping time of a probabilistic sampling rule that samples each source $i \in [M]$ with probability $c^*_i\left(D\right)$, whenever the estimated subset of anomalous sources is $D$, i.e. $\Delta_n =D$.

For every $i \in [M]$, we  set  $f_{0i}:=\mathcal{N}(0,1)$ and $f_{1i}:=\mathcal{N}(\mu_i,1)$,  i.e., all observations from source $i$ are Gaussian with  variance $1$   and mean equal to  $\mu_i$ if the source is anomalous, and 0 otherwise, and as a result,  $I_{i}=J_{i}=(\mu_{i})^2 /2$.  We consider a  homogeneous setup, i.e., $\mu_{i}=\mu$ for all $i \in [M]$, as well as a heterogeneous setup, where
\begin{equation*}
	\mu_{i}=
	\begin{cases}
		\mu,\quad & 1 \leq i\leq M/2,\\
		2 \mu ,\quad & M/2 < i \leq M.
	\end{cases}
\end{equation*}
In both setups, we set $\mu=0.5$, $\alpha=\beta=10^{-3}$, $M=10$,  $K=5$, and $\ell =1$,  $u=6$, i.e., $1\leq |A| \leq 6$. The thresholds of the stopping times are selected, via simulations, so that the familywise error probability of each kind in \eqref{err_const} is approximately equal to $10^{-3}$. For each possible value of the underlying (but unknown) number of anomalous sources $|A| \in \{1, \ldots, 6\}$, we compute the expected stopping time for each one of the two sampling rules, when the true (but unknown) subset  of anomalous sources is of the form $A:=\{1, \ldots, |A|\}$. In all cases, the  Monte Carlo error for each estimated expected value is $10^{-1}$. 
\vspace{-2.5 em}
	\begin{figure}[h]
		\subfloat[Homogeneous setup]{
			\includegraphics[width=0.5\linewidth]{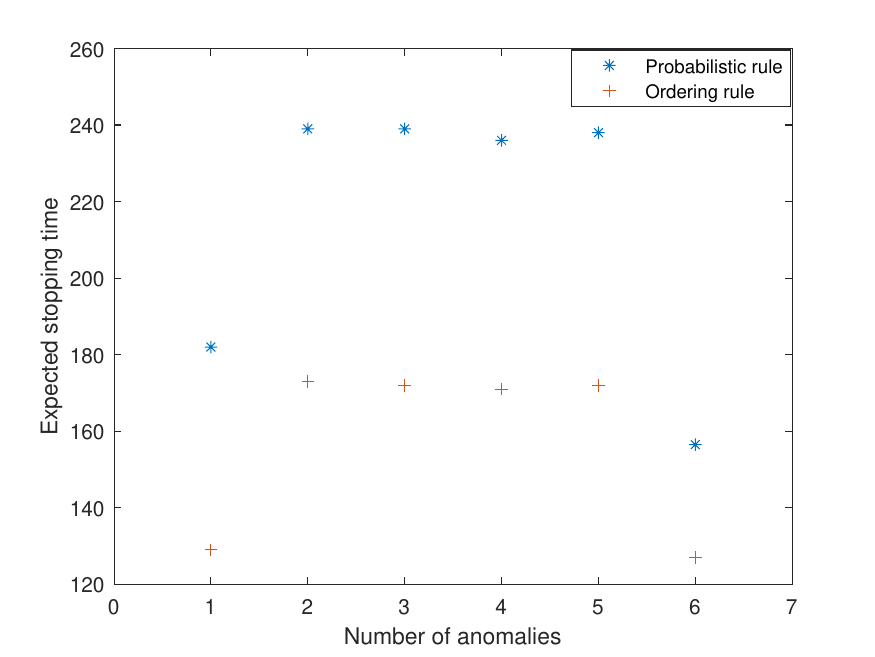}  
			\label{fig:homog_nonasym}
		}
		\subfloat[Heterogeneous setup]{
			\includegraphics[width=0.5\linewidth]{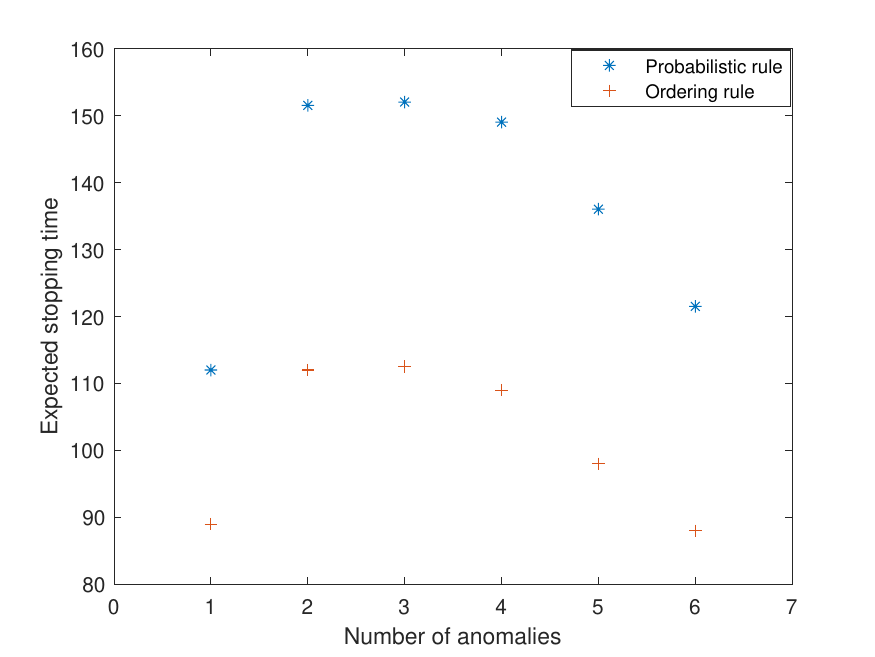}
			\label{fig:nonhomog_nonasym} 
		} 
		\captionsetup{justification=raggedright, singlelinecheck=false}
		\caption{Expected stopping time that corresponds to each sampling rule versus the size of $|A|$.}
	\end{figure}

In Figures \ref{fig:homog_nonasym}  and  \ref{fig:nonhomog_nonasym}, we plot the expected stopping times against $|A| \in \{1, \ldots, 6\}$ in the homogeneous and heterogeneous setup. We observe that in both setups and for both sampling rules, the expected stopping time  is much smaller when the number of anomalous sources is equal to either $\ell$ or $u$ than when it is between  $\ell$ and $u$. This is because we encode the prior information on the number of anomalies in the definition of the stopping rule \eqref{gi_decision_rule}, which enables us to stop faster. Moreover, we observe that in all cases, the ordering sampling rule leads to significantly smaller expected stopping time than the corresponding probabilistic sampling rule. Intuitively, this is because the ordering rule prioritizes the sampling of the sources with small LLRs without wasting samples for the sources with already large LLRs, which is impossible with the probabilistic sampling rule as it samples each source with some probability.

\section{Conclusion} \label{sec: conclusion}
We consider the problem of sequential identification of anomalies under a general sampling constraint, where it is not necessary to sample only one source per time instant, and we allow for arbitrary bounds on the number of anomalous data sources. We 
introduce an  ordering sampling rule, whose main feature is that the sources that are selected for sampling at each time are not determined solely based on the currently estimated subset of anomalous sources, but also on the ordering of the LLR statistics. We show that with an  appropriate design, such a sampling rule, combined with appropriate stopping and decision rules, leads to asymptotic optimality. That is, it minimizes the expected time for stopping, under any possible anomalous subset, to a first-order asymptotic approximation as the error rates go to 0. A novel proof technique is developed for this result that covers, for the first time, the case of multiple sampled sources per time  instant, and the case that the number of anomalies is not necessarily known a priori.

We focused our presentation on the case of the usual familywise error metrics \eqref{err_const}, however this assumptions can be relaxed by considering generalized error metrics. According to \cite{He_and_Bartroff_2021}, the asymptotic optimality theory of the present paper remains valid for any error metrics that are bounded up to a multiplicative constant by the corresponding familywise error rates, which we considered in this work. These include, among others, the false discovery rate (FDR) and the false non-discovery rate (FNR) \cite{Bartroff_and_Song_2020}, as well as the respective generalized FDR, FNR \cite{Bartroff_2018}. However, this direct extension is not possible in the case of  \textit{generalized} familywise error rates in \cite{LehmannRomano2005}, where we tolerate a certain number of  false positives and false negatives, respectively.  The problem considered in this paper but for \textit{generalized} familywise error rates, was studied in \cite{Song_and_Fellouris_2019} in the absence of sampling constraints (full sampling), and in \cite{ts2024sequential} under sampling constraints for probabilistic sampling rules. In \cite{ts2024sequential}, the authors compute the appropriate minimum sampling frequencies in the long-run, i.e. $\{c^*_{i}(A)\, :\, i\in [M]\}$, for the problem under \textit{generalized} familywise error rates, and thus considering $\hat{N},\, \check{N}, \hat{G},\, \check{G}$ as in Subsections \ref{default}, \ref{general} but for the adapted $\{c^*_{i}(A)\, :\, i\in [M]\}$ we can extend our results to ordering sampling rules. 

Future directions of the present work include the incorporation of composite hypotheses. To be specific,  suppose  that for each  $i \in [M]$, the density of the observations in source $i$ is assumed to belong to a family
$\{f_{\theta}: \theta \in \Theta_i\}$,  
and that  the source $i$  is  anomalous (resp. regular)  if its density is
$f_{\theta}$ for some $\theta $ in  $\Theta_{i,1}$ (resp. $\Theta_{i,0}$), where $\Theta_{i,0}$ and $\Theta_{1,i}$ are two disjoint subsets of $\Theta_i$. In this case, we must choose an appropriate test statistic compatible to composite hypotheses \cite[Section 5]{tartakovsky_book_2014}. Also, the minimum long-run sampling frequencies, and as a result the functions $\hat{N},\, \check{N}, \hat{G},\, \check{G}$, depend not only on the true anomalous subsets, but also on the true parameter vector, $\boldsymbol{\theta}:=(\theta_{1},\ldots,\theta_{M})$. An asymptotic optimality analysis for our problem, for composite hypotheses and without sampling constraints was presented in \cite{song_fel_supple}, and the special case of a single sampled source per time instant, and a single anomalous source in \cite{Cohen2020composite}. Other research directions include non-uniform sampling cost per observation across different sources as in \cite{Cohen2019nonlinearcost}, hierarchical structure on the sources as in \cite{Cohen_hier}, as well as the application of ordering sampling rules to non-sequential testing problems with adaptive design, as in \cite{kartik}. The proof of second-order asymptotic optimality of an ordering sampling rule for the general setup, as in \cite{Lalley_Lorden_1986} for a single sampled source at each time instant and a single anomalous source, is also an interesting open problem. 

\section*{Acknowledgments}
This work was supported in part by the NSF under grants CIF 1514245 and DMS 1737962, through the University of Illinois at Urbana–Champaign.

\bibliographystyle{ieeetr}
\bibliography{biblio_new}

\appendices
\renewcommand{\thetheorem}{ \Alph{section}.\arabic{theorem}}
\renewcommand{\thelemma}{ \Alph{section}.\arabic{lemma}}

\section{Definition of $x(A)$ and $y(A)$}\label{app: defin x and y}

In Appendix \ref{app: defin x and y}, we present the formulas of $x(A)$ and $y(A)$ as originally given in \cite[Theorem 5.1]{Tsopela_2022}. We also provide Proposition \ref{corl}, which reveals the fact that by $x(A)=0$ (resp. $y(A)=0$) we can deduce that $|A|=\ell$ (resp. $|A|=u$), a property that will be used in proof of our results about consistency. For the better presentation of the formulas of $x(A)$ and $y(A)$ we consider the following auxiliary quantities
\begin{equation*}
	\begin{aligned}
\theta_{A}:= I^{*}(A) / J^{*}(A^c),\qquad z_A :=\theta_{A}/(r-1), \;\; \mbox{if } \, r>1,\qquad w_A := (1/\theta_A)/( 1/r -1), \;\; \mbox{if } \, r<1,
	\end{aligned}
\end{equation*}
and
\begin{align}\label{K_hat}
	\begin{split}
		\hat{K}(A):=
		\begin{cases}
			\sum\limits_{i \in  A} I^{*}(A)/I_i,& \qquad  A \neq \emptyset, \\
			0,& \qquad  A=\emptyset, 
		\end{cases} \qquad \qquad  
		\check{K}(A^c):=
		\begin{cases}
			\sum\limits_{i \in  A^c} J^{*}(A^c)/J_i,& \quad  A \neq [M],  \\
			0,& \quad A=[M].
		\end{cases} 
	\end{split}
\end{align}

A. We start with the case where the number of anomalies is known, i.e. $1 \leq \ell = u \leq M-1$, and we distinguish two cases:
\begin{itemize}
	\item  If  $\hat{K}(A)  \leq \theta_{A}\, \check{K}(A^c) $, then
	\begin{equation}\label{kn_c1} 
	\begin{aligned}
		x(A):=( K/ \hat{K}(A) ) \wedge 1, \qquad  y(A):= \left( ( K-\hat{K}(A) )^{+} / \check{K}(A^c) \right) \wedge 1.
	\end{aligned}
	\end{equation}
	
	\item If $\hat{K}(A)  > \theta_{A} \, \check{K}(A^c) $, then  
	\begin{equation}\label{kn_c2} 
	\begin{aligned}
		x(A):= \left(( K-\check{K}(A^c) )^{+} / \hat{K}(A) \right)  \wedge 1, \qquad y(A):=( K/ \check{K}(A^c) ) \wedge 1.
	\end{aligned}
	\end{equation}
\end{itemize}

B. We continue with the case where the number of anomalies is unknown, i.e., $0 \leq \ell < u \leq M$, and we distinguish the following three cases.

\begin{enumerate}[(a)]
	\item If $\ell < |A|<u$, then 
	\begin{align*}
		x(A) := \frac{K}{\hat{K}(A) + (\theta_{A}/r) \check{K}(A^c)} \wedge (r/\theta_{A}) \wedge  1, \qquad  y(A)&:=(\theta_{A}/r)\, x(A). 
	\end{align*}	
	
	\item If $|A|=\ell$, then we distinguish three subcases. \\
	
	\begin{itemize}
		\item [1.] If $\ell =0$ or $r \leq 1$, then  
		\begin{align*}
			x(A):=0, \qquad y(A):=(K/ \check{K}(A^c) ) \wedge 1.
		\end{align*}
		
		\item [2.] If $\ell >0$, $r >1$, $z_A <1$,  and   $K >\hat{K}(A)+z_A \, \check{K}(A^c)$, then
		\begin{align*}
			x(A):=1, \qquad y(A):= \bigl( (K-\hat{K}(A) )/  \check{K}(A^c) \bigr) \wedge 1.
		\end{align*}
		
		\item [3.] If $\ell >0$,  $r >1$, and either $z_A  \geq 1$ or $K \leq \hat{K}(A)+z_A \, \check{K}(A^c) $, then 
		\begin{align*}
			x(A):= \frac{K}{\hat{K}(A)+z_A \, \check{K}(A^c)}   \wedge  (1/z_A)\wedge  1,  \qquad y(A):= \frac{K}{\check{K}(A^c)+(1/z_A) \, \hat{K}(A)}   \wedge  z_A \wedge  1. 
		\end{align*}					
		
	\end{itemize}
	
	\item If $|A|=u$, then we distinguish three subcases.\\
	
	\begin{itemize}
		\item [1.] If $u=M$ or $r \geq 1$, then 
		\begin{align*}
			x(A):=(K/ \hat{K}(A) ) \wedge 1, \qquad y(A):=0.
		\end{align*}
		\item [2.] If $u<M$, $r < 1$, $w_A<1$, and $K >  \check{K}(A^c)+ w_A \hat{K}(A)$, then  
		\begin{align*} 
			x(A) :=\bigl( (K-\check{K}(A^c)) / \hat{K}(A) \bigr)  \wedge 1, \qquad y(A):=1.
		\end{align*}						
		\item [3.] If $u<M$, $r < 1$, and either	$w_A\geq 1$ or $K  \leq   \check{K}(A^c)+ w_A \hat{K}(A)$, then
		\begin{align*} 
			x(A) := \frac{K}{ \hat{K}(A)+ (1/w_A) \check{K}(A^c)} \wedge w_A \wedge 1, \qquad
			y(A) := \frac{K}{ \check{K}(A^c)+ w_A \hat{K}(A)} \wedge (1/w_A) \wedge 1. 
		\end{align*}		    
	\end{itemize}
\end{enumerate}	

By definition, at least one of $x(A)$, $y(A)$ is positive in anyone of the above cases. However, when $K$ is relatively small one of the $x(A)$, $y(A)$ can be equal to zero. By inspection of the formulas, we verify that the fact that $x(A)=0$ (resp. $y(A)=0$) reveals information about the size of $|A|$, as summarized in the following proposition.

\begin{proposition}\label{corl}
	For any $A \, \in \mathcal{P}_{\ell,u}$, the following implications hold.
	\begin{enumerate}
		
		\item[(i)] If $x(A)=0$, then we know that $|A|=\ell$.
		
		\item[(ii)] If $y(A)=0$, then we know that $|A|=u$.
	\end{enumerate}
\end{proposition}

\begin{IEEEproof}
By inspection of the formulas, we observe that $x(A)=0$ can hold only in the following two cases: (1) when $|A|=0 < u$ (subcase b.1), or (2) when $|A|=\ell =u$, $\hat{K}(A)  > \theta_{A} \, \check{K}(A^c)$ and $K \leq \check{K}(A^c)$. Similarly,  $y(A)=0$ only in the following two cases: (1) when $|A|=M > \ell$ (subcase c.1), or (2) when $|A|=\ell =u$, $\hat{K}(A)  \leq \theta_{A} \, \check{K}(A^c)$ and $K \leq \hat{K}(A)$.
\end{IEEEproof}

\section{}\label{intrLLR}
In Appendix \ref{intrLLR}, we state and prove auxiliary lemmas that are used in the proof of the main results of this paper presented in Appendices \ref{consist_orde}, \ref{vlbg11}. Throughout this appendix, we fix a subset of anomalies $A \in \cP_{\ell, u}$, and a sampling rule $R$. In order to lighten the notation we do not emphasize the dependence of the various statistics on the sampling rule $R$, thus we write  $\Lambda_i(n), \pi_i(n), \cF_{n}$ instead of  $\Lambda^R_i(n), \pi_i^R(n), \cF^R_{n}$. For the establishment of our results, we introduce the following notation.

For any $i \in [M]$, we set 
\begin{align}\label{tilde}
	\bar{\Lambda}_i(n) :=
	\begin{cases}
		 \bar{\Lambda}_i(n-1) + \Bigl( \log\left( \frac{f_{i1}(X_i(n))}{f_{i0}(X_i(n))} \right)  - \Exp_A\left[ \log\left( \frac{f_{i1}(X_i(n))}{f_{i0}(X_i(n))} \right)\right] \Bigr) \, R_i(n),\quad &n \in \mathbb{N}, \\
		 0, & n=0,
	\end{cases}
\end{align} 
and comparing with \eqref{LLR}, we observe that for all $n \in \bN_{0}$,
\begin{align}\label{decompose}
	\bar{\Lambda}_i(n) &= 
	\begin{cases}
		\Lambda_i(n) - I_i \,n\, \pi_i(n), \quad i \in A, \\ 
		\Lambda_i(n) + J_i \, n\, \pi_i(n), \quad i \notin A.
	\end{cases}
\end{align}
For each $i \in [M]$ and $n,m \in \bN_0$, we denote by $\Lambda_i(n:m)$  the LLR statistic based on the measurements from source $i$ during  $[m+1,m+n]$, i.e., $\Lambda_{i}(n:m):=\Lambda_{i}(n+m) - \Lambda_{i}(m)$, respectively,
\begin{equation}\label{lnmb}
	\bar{\Lambda}_{i}(n:m):=\bar{\Lambda}_{i}(n+m) - \bar{\Lambda}_{i}(m),
\end{equation}
and by $\pi_{i}(n:m)$ the empirical sampling frequency of source $i$ during $[m+1, m+n]$, i.e.,
\begin{equation}\label{pnm}
	\pi_{i}(n:m):= \frac{1}{n} \sum_{u=m+1}^{m+n} R_{i}(u).
\end{equation}
In the following results, in place of $m$ we have an a stopping time $\tau \in \cT$, where $\cT$ is the family of all a.s.-finite stopping times with respect to $\{ \cF_{n}\,:\, n \in \bN \}$. The Lemmas \ref{exp_tau}, \ref{polyn}, \ref{lambda_tau} show the rate of decay of the probability of an event $E(n:\tau)$ with respect to $n$, which is defined based on the measurements collected after a stopping time $\tau \in \cT$ up to time $\tau+n$, i.e., during the random interval $[\tau+1, \tau+n]$. For this, we introduce the following terminology. Given a family of events $\{E(n:\tau): n \in \bN_0, \tau \in \cT\}$,  we say that the conditional probability of  $E(n:\tau)$ given $\cF_{\tau}$ is
\begin{itemize}
	\item  \textit{uniformly exponentially decaying} if there are $C,\, c >0$  independent of $\tau \in \mathcal{T}$ and $n\in \bN_{0}$, so that 
	\begin{equation}
		\Pro_{A}\left( E(n,\tau) \, |\, \cF_{\tau} \right) \leq C \, e^{-c n}, \quad \mbox{ a.s.}
	\end{equation}
	\item  \textit{uniformly $q$-polynomially decaying}, for some $q>0$, if there is $C >0$ independent of $\tau \in \mathcal{T}$ and $n\in \bN_{0}$, so that 
	\begin{equation}\label{pol}
		\Pro_{A}\left( E(n,\tau) \, |\, \cF_{\tau} \right) \leq C \, n^{-q}, \quad \mbox{ a.s.}
	\end{equation}
\end{itemize}

\textit{In what follows, when we refer to a constant we also imply independent of any stopping time in $\mathcal{T}$.} The following Lemma \ref{exp_tau} is a generalization of \cite[Lemma A.1]{Tsopela_2022}, which  corresponds to the special case that $\tau=0$.

\begin{lemma}\label{exp_tau}
	Let  $\zeta \in (0,1]$,  $\epsilon >0$.  Then,
	\begin{equation}\label{abs_m_tau_yp}
		\begin{aligned}
			\Pro_{A}&\left(\exists\, m \geq \zeta n: \, \bar{\Lambda}_i(m:\tau)<-\epsilon \, m \, |\, \cF_{\tau} \right), \quad\, \forall \;  i \in A, \\
			\Pro_{A}&\left(\exists\, m \geq \zeta n:\, \bar{\Lambda}_i(m:\tau)>\epsilon \, m \, | \, \cF_{\tau} \right),  \qquad \forall \;  i \notin A,
		\end{aligned}
	\end{equation}
	are	uniformly exponentially decaying. 
\end{lemma}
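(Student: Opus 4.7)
The plan is to exploit the fact that, conditional on $\mathcal{F}^R(\tau)$, the process $\{\bar{\Lambda}^R_i(m:\tau):m\in\bN_0\}$ is a zero-mean $\{\mathcal{F}^R(\tau+m)\}$-martingale whose increment at step $m+1$ has the form $Y_i(\tau+m+1)\,R_i(\tau+m+1)$, where $Y_i(k):=g_i(X_i(k))-\Exp_A[g_i(X_i(k))]$ is independent of $\mathcal{F}^R(k-1)$ with mean zero, while $R_i(k)\in\{0,1\}$ is $\mathcal{F}^R(k-1)$-measurable. The goal is to get a Chernoff-type exponential tail bound for $\bar{\Lambda}^R_i(m:\tau)$ that is uniform in $\tau$, and then sum it over $m\ge\zeta n$ by a union bound.

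First I would construct an exponential supermartingale. For any $\lambda$ in the domain where $\phi(\lambda):=\Exp_A[\exp(\lambda Y_i)]$ is finite, the conditional MGF of the sampling-gated increment $Y_i(k)R_i(k)$ given $\mathcal{F}^R(k-1)$ equals $\phi(\lambda)^{R_i(k)}$: if source $i$ is not sampled, the increment is zero and contributes a factor $1=\phi(\lambda)^0$, while if it is sampled the contribution is exactly $\phi(\lambda)=\phi(\lambda)^1$. Consequently,
$$M_m\,:=\,\exp\bigl(\lambda\bar{\Lambda}^R_i(m:\tau)\bigr)\,\phi(\lambda)^{-N^R_i(m:\tau)}$$
is a $\Pro_A$-martingale started at $M_0=1$; combined with $\phi(\lambda)\ge 1$ (Jensen, since $\Exp_A[Y_i]=0$) and $N^R_i(m:\tau)\le m$, this gives $\Exp_A[\exp(\lambda\bar{\Lambda}^R_i(m:\tau))\,|\,\mathcal{F}^R(\tau)]\le\phi(\lambda)^m$ a.s., with a bound that does not depend on $\tau$. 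A Chernoff inversion then produces, for $i\in A$,
$$\Pro_A\bigl(\bar{\Lambda}^R_i(m:\tau)<-\epsilon m\,\big{|}\,\mathcal{F}^R(\tau)\bigr)\le\bigl(\phi(\lambda)\,e^{\lambda\epsilon}\bigr)^m,\quad\lambda<0,$$
and symmetrically for $i\notin A$ with $\lambda>0$. Finiteness of $\phi$ in the relevant one-sided neighborhood of $0$ is automatic from the likelihood-ratio structure: for $i\in A$ one has $\phi(\lambda)=e^{-\lambda I_i}\int f_{1i}^{1+\lambda}f_{0i}^{-\lambda}d\nu_i$, which is finite on $\lambda\in[-1,0]$ by H\"older's inequality, and analogously for $i\notin A$ on $[0,1]$. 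Since $\phi(0)=1$ and $\phi'(0)=0$, a Taylor expansion yields $\phi(\lambda)e^{\lambda\epsilon}=1+\lambda\epsilon+O(\lambda^2)$, which is strictly less than $1$ for $\lambda$ of the appropriate sign and small enough in modulus. Fixing such a $\lambda^*$, one obtains a rate $c_\epsilon>0$ for which
$$\Pro_A\bigl(\bar{\Lambda}^R_i(m:\tau)<-\epsilon m\,\big{|}\,\mathcal{F}^R(\tau)\bigr)\le e^{-c_\epsilon m}\quad\text{a.s.},$$
uniformly in $\tau\in\cT_A$ and $m\in\bN$.

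Finally, a union bound over $m\ge\zeta n$ closes the argument:
$$\Pro_A\bigl(\exists\, m\ge\zeta n:\bar{\Lambda}^R_i(m:\tau)<-\epsilon m\,\big{|}\,\mathcal{F}^R(\tau)\bigr)\le\sum_{m\ge\zeta n}e^{-c_\epsilon m}\le\frac{e^{-c_\epsilon\zeta n}}{1-e^{-c_\epsilon}},$$
which is the claimed uniform exponential decay; the case $i\notin A$ is completely analogous. The main technical subtlety is the clean factorization $\phi(\lambda)^{R_i(k)}$ of the conditional MGF of the sampling-gated martingale increment $Y_i(k)R_i(k)$, which is the only place where the adaptive sampling structure truly enters; once that observation is in place, the rest reduces to the standard exponential-tilting/Chernoff computation with constants that do not depend on $\tau$, and summing a geometric series over $m\ge\zeta n$ upgrades a pointwise rate to the claimed \emph{uniform} exponential decay.
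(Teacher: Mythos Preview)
Your proof is correct and follows essentially the same architecture as the paper's: establish the pointwise bound $\Pro_A(\bar\Lambda^R_i(m:\tau)<-\epsilon m\mid\mathcal{F}^R(\tau))\le e^{-cm}$ uniformly in $\tau$, then sum the geometric series over $m\ge\zeta n$. The only presentational difference is that the paper obtains the pointwise bound by decomposing over $\{\tau=u\}$ and invoking the $\tau=0$ case from \cite[Lemma~A.1]{Tsopela_2022}, whereas you carry out the exponential-tilting argument directly via the supermartingale $M_m=\exp(\lambda\bar\Lambda^R_i(m:\tau))\phi(\lambda)^{-N^R_i(m:\tau)}$; both routes deliver the same constant $c=c_\epsilon$ independent of $\tau$. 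One small cosmetic point: your claim $\phi(\lambda)e^{\lambda\epsilon}=1+\lambda\epsilon+O(\lambda^2)$ tacitly assumes a two-sided second derivative at $0$, which is not given; it suffices (and is all you actually use) that the left derivative $h'(0^-)=\epsilon>0$, which follows from convexity of $h$ and finiteness of $\phi$ on $[-1,0]$.
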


\begin{IEEEproof}
	We only prove the inequality for $i \in A$, as the proof for $i \notin A$ is similar. We fix $i \in A$, $m \in \bN$, $\tau \in \cT$, and it suffices to show that there is a constant $c>0$ such that
	\begin{equation}\label{abs_m_tau_yp2}
		\Pro_{A}\left(\bar{\Lambda}_i(m:\tau)<-\epsilon \, m,\, B \right) \leq e^{- c\, m} \, \Pro_{A}(B), \quad \forall \; 
		B \in \cF_{\tau}.
	\end{equation}
	Then, the claim follows by application of the law of total probability over all $m \geq \zeta n$. We fix $B \in \cF_{\tau}$. Since $\Pro_{A}(\tau < \infty)=1$, by the law of total probability we have
	\begin{equation*}
		\Pro_{A}\left(\bar{\Lambda}_i(m:\tau)<-\epsilon \, m ,B \right)= \sum_{u=0}^{\infty} \Pro_{A}\left(\bar{\Lambda}_i(m:u)<-\epsilon \, m ,B, \tau=u \right).
	\end{equation*}
	Since $B \cap\{\tau=u\} \in \cF_{u}$ for every $u \in \bN_{0}$, working as in \cite[Lemma A.1]{Tsopela_2022} we can show that  there is a constant $c>0$ so that for every $u \in \bN_{0}$,
	\begin{equation*}
		\begin{aligned}
			\Pro_{A}\left(\bar{\Lambda}_i(m:u)<-\epsilon \, m ,B, \tau=u \right) \leq e^{- c\, m} \, \Pro_{A}(\tau=u,B).
		\end{aligned}
	\end{equation*}
	and summing over all $u \in \bN_{0}$, we show  \eqref{abs_m_tau_yp2}.\\
\end{IEEEproof}

\begin{lemma}\label{polyn}
	Let  $\zeta \in (0,1]$, $\epsilon >0$. Suppose condition \eqref{mc1} holds for some $\mathfrak{p} \geq 2$. Then, 
	\begin{align*}
		\Pro_{A}&\left(\exists \,  m \in [\zeta n,n] :\, \bar{\Lambda}_i(m:\tau) \geq \epsilon \, m \, |\,  \cF_{\tau}\right)  \quad \;\;\, \forall \;  i \in A, \\
		\Pro_{A}&\left(\exists \, m \in [\zeta n,n] :\, \bar{\Lambda}_i(m:\tau) \leq -\epsilon \,  m \, | \,  \cF_{\tau}\right)
		\quad \forall \;  i \notin A,
	\end{align*}
	are uniformly $\mathfrak{p}/2$-polynomially decaying.
\end{lemma}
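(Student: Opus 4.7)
I will prove the bound for $i \in A$, since the case $i \notin A$ is symmetric. Fix $\tau \in \cT_A$ and $n \in \bN$, and set $M_m := \bar{\Lambda}^R_i(m:\tau)$ for $m \in \bN_0$. By \eqref{tilde}, conditionally on $\mathcal{F}^R(\tau)$ the process $\{M_m\}_{m \geq 0}$ is a zero-mean martingale with respect to the shifted filtration $\{\mathcal{F}^R(\tau+m)\}_{m \geq 0}$, with increments $D_k := (g_i(X_i(\tau+k)) - I_i)\, R_i(\tau+k)$; indeed, $R_i(\tau+k)$ is $\mathcal{F}^R(\tau+k-1)$-measurable while $X_i(\tau+k) \sim f_{1i}$ is independent of $\mathcal{F}^R(\tau+k-1)$ (as $i \in A$).

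The first step is to observe that on the event of interest we automatically have $\max_{1 \leq m \leq n} M_m \geq \epsilon \zeta n$. Therefore, applying Doob's $L^{\mathfrak{p}}$ maximal inequality conditionally on $\mathcal{F}^R(\tau)$ yields
\begin{equation*}
\Pro_A\bigl(\exists\, m \in [\zeta n, n]:\, M_m \geq \epsilon m \,|\, \mathcal{F}^R(\tau)\bigr) \;\leq\; (\epsilon \zeta n)^{-\mathfrak{p}} \; \Exp_A\bigl[|M_n|^{\mathfrak{p}} \,|\, \mathcal{F}^R(\tau)\bigr].
\end{equation*}

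To control the right-hand side, I would invoke the conditional Burkholder-Davis-Gundy inequality, which bounds $\Exp_A[|M_n|^{\mathfrak{p}} \,|\, \mathcal{F}^R(\tau)]$ by $C_{\mathfrak{p}} \Exp_A[(\sum_{k=1}^n D_k^2)^{\mathfrak{p}/2} \,|\, \mathcal{F}^R(\tau)]$, and then apply Jensen's inequality (valid since $\mathfrak{p}/2 \geq 1$) to further bound this by $C_{\mathfrak{p}}\, n^{\mathfrak{p}/2 - 1}\, \Exp_A[\sum_{k=1}^n |D_k|^{\mathfrak{p}} \,|\, \mathcal{F}^R(\tau)]$. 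The moment condition \eqref{mc1}, combined with the elementary inequality $|a-b|^{\mathfrak{p}} \leq 2^{\mathfrak{p}-1}(|a|^{\mathfrak{p}}+|b|^{\mathfrak{p}})$, gives $\Exp_{f_{1i}}[|g_i(X_i) - I_i|^{\mathfrak{p}}] < \infty$; hence by iterated conditioning the quantity $\Exp_A[|D_k|^{\mathfrak{p}} \,|\, \mathcal{F}^R(\tau+k-1)]$ is uniformly dominated by a constant $K_i$, and consequently $\Exp_A[|M_n|^{\mathfrak{p}} \,|\, \mathcal{F}^R(\tau)] \leq C_{\mathfrak{p}} K_i\, n^{\mathfrak{p}/2}$. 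Substituting into the Doob bound produces a uniform $C\, n^{-\mathfrak{p}/2}$ estimate, as required.

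The argument is essentially routine martingale analysis; the only mildly delicate point will be legitimizing Doob and BDG in their conditional forms given that $\tau$ is a stopping time, but this is standard once one observes that, conditionally on $\mathcal{F}^R(\tau)$, the shifted process is a genuine martingale in the shifted filtration and both inequalities hold almost surely on the conditional probability space. In contrast to Lemma \ref{exp_tau}, no uniform Chernoff-type bound is available on the upper tail of $g_i(X_i)$ under $f_{1i}$ (the observations are drawn from the alternative distribution), which is precisely why only a polynomial rate $n^{-\mathfrak{p}/2}$ is achievable here and why the moment condition \eqref{mc1} enters the hypothesis.
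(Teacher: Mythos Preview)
Your proposal is correct and follows essentially the same approach as the paper: both first observe that the event forces $\max_{m\le n} |\bar\Lambda_i^R(m:\tau)| \ge \epsilon\zeta n$, then apply Doob's maximal inequality together with an $L^{\mathfrak{p}}$ moment bound on the terminal value to obtain the $n^{-\mathfrak p/2}$ rate. The only cosmetic differences are that the paper first decomposes over $\{\tau=u\}$ to reduce to deterministic shifts before invoking the inequalities, and it cites Rosenthal's inequality \cite[Theorem 2.12]{Hall_Heyde} for the moment bound, whereas you work conditionally and use BDG followed by Jensen --- both routes yield the same $C_{\mathfrak p}\,n^{\mathfrak p/2}$ estimate on $\Exp_A[|M_n|^{\mathfrak p}\mid\mathcal F^R(\tau)]$.
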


\begin{IEEEproof}
	We prove the lemma for  $i \in A$, as the proof for $i \notin A$ is similar. We fix $i \in A$, $n \in \bN$, $\tau \in \cT$, and it suffices to show that there is a constant $C>0$ such that
	\begin{equation}\label{tsh1}
	 \Pro_{A}\left(\exists \, m \in [\zeta n,n]:\,\bar{\Lambda}_i(m:\tau)\geq \epsilon \, m, B \right) \leq C\, n^{-\mathfrak{p}/2}\, \Pro_{A}(B), \quad \forall \; 
	 B \in \cF_{\tau}.
	\end{equation}
	We fix $B \in \cF_{\tau}$. Since $\Pro_{A}(\tau < \infty)=1$, by  the law of total probability we have 
	\begin{equation}\label{stc}
		\begin{aligned}
			\Pro_{A}\left(\exists \, m \in [\zeta n,n]:\,\bar{\Lambda}_i(m:\tau)\geq \epsilon \, m, B \right)
			= \sum_{u=0}^{\infty} \Pro_{A}\left(\exists \, m \in [\zeta n,n]:\bar{\Lambda}_i(m:u)\geq \epsilon \, m, B, \tau=u \right),
		\end{aligned}
	\end{equation}
	and each term in the sum is further bounded by
	\begin{align*}
		\Pro_{A}\left(\exists \, m \in [\zeta n,n]:\,\bar{\Lambda}_i(m:u) \geq \epsilon \, m, B, \; \tau=u \right) 
		&\leq \Pro_A\left(\exists \, m \in [\zeta n,n]:\,\bar{\Lambda}_i(m:u) \geq \epsilon \, \zeta\,  n,  B, \; \tau=u \right) \\
		&\leq \Pro_{A}\left( \max_{m\in [0,n]}\big{|}\bar{\Lambda}_i(m:u)\big{|}^{\mathfrak{p}} \geq 
		(\epsilon \, \zeta \, n)^{\mathfrak{p}},B, \; \tau=u \right).
	\end{align*}
	For each $u \in \bN_{0}$, $\{\bar{\Lambda}_{i}(m:u)\,:\, m \in \bN_{0}\}$ is a $\{\cF_{m+u}\,:\, m \in \bN_{0}\}$-martingale. Thus, by Doob's submartingale inequality we have 
	\begin{equation*}
		\Pro_{A}\left(\max_{m\in [0,n]}|\bar{\Lambda}_i(m:u)|^{\mathfrak{p}} \geq \epsilon^{\mathfrak{p}} (\zeta\, n)^{\mathfrak{p}}, B, \tau=u  \right) \leq \frac{\Exp_{A}\left[|\bar{\Lambda}_{i}(n:u)|^{\mathfrak{p}}; \{B, \tau=u\} \right]}{(\zeta\epsilon)^{\mathfrak{p}} \, n^{\mathfrak{p}}}, 
	\end{equation*}
	and by  Rosenthal's inequality \cite[Theorem 2.12]{Hall_Heyde} there is a $C_{0}>0$ such that 
	\begin{equation*}
		\Exp_{A}\left[|\bar{\Lambda}_{i}(n:u)|^{\mathfrak{p}}; \,\{ B, \tau=u\} \right] \leq C_0 \, n^{\mathfrak{p}/2} \, \Pro_{A}(B,\tau=u).
	\end{equation*}
	which implies that 
	\begin{equation*}
		\Pro_{A}\left(\exists \, m \in [\zeta n,n]:\, \bar{\Lambda}_i(m:u) \geq \epsilon m, B, \; \tau=u \right)  \leq C\, n^{-\mathfrak{p}/2}\, \Pro_{A}(B,\tau=u),
	\end{equation*}
	where $C:= C_{0}/(\zeta\epsilon)^{\mathfrak{p}}$, and summing over $u \in \mathbb{N}_{0}$ we show \eqref{tsh1}.\\
\end{IEEEproof}
For Lemma \ref{lambda_tau}, we introduce the quantity 
\begin{equation}\label{qa}
	q_{A}:=\max\bigg\{ \max\limits_{i\in A}I_{i}/I^{*}(A), \, \max\limits_{i\notin A}J_{i}/J^{*}(A^c) \bigg\}.
\end{equation} 

\begin{lemma}\label{lambda_tau} 
	Let $\zeta,\, \rho,\, \lambda \in (0,1]$.
	
	\begin{itemize}
		\item[(i)] For all $i \in A$, $j \notin A$,  
		\begin{align*}
			\Pro_{A}\left(\exists\, m \geq \zeta n:\, \Lambda_{i}(m:\tau)<0,\, \pi_{i}(m:\tau) > \rho
			\, | \,\cF_{\tau} \right)  & \\
			\Pro_{A}\left(\exists\, m \geq \zeta n:\, \Lambda_{j}(m:\tau)>0,\, \pi_{j}(m:\tau) > \rho
			\, |\, \cF_{\tau} \right)  &\\
			\Pro_{A}\left(\exists\, m \geq \zeta n:\, \Lambda_{j}(m:\tau)> \Lambda_{i}(m:\tau),\, \pi_{i}(m:\tau) > \rho \,| \, \cF_{\tau} \right)  &\\
			\Pro_{A}\left(\exists\, m \geq \zeta \,  n:\, \Lambda_{j}(m:\tau)> \Lambda_{i}(m:\tau),\, \pi_{j}(m:\tau) > \rho \,| \, \cF_{\tau} \right)  &
		\end{align*}  
		are uniformly exponentially decaying.\\
		
		\item [(ii)]  If  $\lambda/\rho > q_{A}$ and condition \eqref{mc1} holds for some $\mathfrak{p}\geq 2$, then for all $i, j \in A$,
		\begin{equation}\label{rf1}
			\begin{aligned}
				\Pro_{A}&\left(\exists \, m \in [\zeta n,n]:\, \Lambda_{i}(m:\tau){>}\Lambda_{j}(m:\tau),\, \pi_{i}(m:\tau) {<} \rho,\, \pi_{j}(m:\tau) {>} \lambda \, | \, \cF_{\tau}\right)		
			\end{aligned}
		\end{equation}
		is uniformly $\mathfrak{p}/2$-polynomially decaying.\\
		
		\item [(iii)] If  $\lambda/\rho < 1/ q_{A}$ and condition \eqref{mc1} holds for some $\mathfrak{p}\geq 2$, then for all $i, j \notin A$,
		\begin{equation*}
			\begin{aligned}
				\Pro_{A}&\left(\exists \, m \in [\zeta n,n]:\, \Lambda_{i}(m:\tau)> \Lambda_{j}(m:\tau),\, \pi_{i}(m:\tau) > \rho,\, \pi_{j}(m:\tau) < \lambda \, | \, \cF_{\tau}\right)
			\end{aligned}
		\end{equation*}
		is 	uniformly $\mathfrak{p}/2$-polynomially decaying.
	\end{itemize}
\end{lemma}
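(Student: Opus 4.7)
In every case the first move is the decomposition \eqref{decompose}, which for the shifted statistics reads
\begin{align*}
\Lambda^{R}_i(m:\tau) &= \bar{\Lambda}^{R}_i(m:\tau) + I_i\, m\, \pi^{R}_i(m:\tau), \qquad i \in A, \\
\Lambda^{R}_j(m:\tau) &= \bar{\Lambda}^{R}_j(m:\tau) - J_j\, m\, \pi^{R}_j(m:\tau), \qquad j \notin A.
\end{align*}
The event inside each probability, combined with the constraints on $\pi^{R}_i(m:\tau)$ and $\pi^{R}_j(m:\tau)$, forces some mean-zero martingale $\bar{\Lambda}^{R}_k(m:\tau)$ to deviate by at least a positive constant times $m$. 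Depending on whether the forced deviation is in the direction $(\,{<}{-}\epsilon m\,$ for $k\in A$, or ${>}\epsilon m$ for $k\notin A)$ covered by Lemma \ref{exp_tau}, or the opposite one covered (only on bounded intervals) by Lemma \ref{polyn}, we invoke the corresponding tool.

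\noindent\textbf{Part (i).} For the first event, $\Lambda^{R}_i(m:\tau)<0$ and $\pi^{R}_i(m:\tau)>\rho$ imply $\bar{\Lambda}^{R}_i(m:\tau)<-I_i\rho\,m$, and Lemma \ref{exp_tau} applies with $\epsilon=I_i\rho$; the second event is analogous with $J_j\rho$ in place of $I_i\rho$. For the third and fourth events, subtracting the two decompositions gives
\[
\bar{\Lambda}^{R}_j(m:\tau)-\bar{\Lambda}^{R}_i(m:\tau) > J_j\,m\,\pi^{R}_j(m:\tau)+I_i\,m\,\pi^{R}_i(m:\tau),
\]
so whichever of $\pi^{R}_i(m:\tau),\pi^{R}_j(m:\tau)$ is assumed to exceed $\rho$ produces a lower bound of the form $\kappa\rho m$ with $\kappa\in\{I_i,J_j\}$. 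Bounding by $\{\bar{\Lambda}^{R}_j(m:\tau)>\tfrac12\kappa\rho m\}\cup\{\bar{\Lambda}^{R}_i(m:\tau)<-\tfrac12\kappa\rho m\}$ and applying Lemma \ref{exp_tau} separately (to $j\notin A$ and $i\in A$) completes part (i).

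\noindent\textbf{Parts (ii) and (iii).} The two parts are parallel; I describe (ii). For $i,j\in A$ the decomposition yields
\[
\bar{\Lambda}^{R}_i(m:\tau)-\bar{\Lambda}^{R}_j(m:\tau) > I_j\,m\,\pi^{R}_j(m:\tau)-I_i\,m\,\pi^{R}_i(m:\tau) \geq (I_j\lambda-I_i\rho)\,m.
\]
The hypothesis $\lambda/\rho>q_A$, combined with $I_i/I_j\leq I_i/I^*_A\leq q_A$, gives $\epsilon:=I_j\lambda-I_i\rho>0$. Hence the event is contained in $\{\bar{\Lambda}^{R}_i(m:\tau)>\tfrac{\epsilon}{2}m\}\cup\{\bar{\Lambda}^{R}_j(m:\tau)<-\tfrac{\epsilon}{2}m\}$. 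The second piece, quantified over $m\geq\zeta n$, is directly controlled by Lemma \ref{exp_tau} (exponential decay, stronger than $\mathfrak{p}/2$-polynomial). The first piece is the ``wrong-tail'' for $i\in A$, covered by Lemma \ref{polyn}, but only on the bounded range $m\in[\zeta n,n]$, whereas we need $m\geq\zeta n$.

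\noindent\textbf{Main obstacle: extending Lemma \ref{polyn} to $m\geq\zeta n$.} The plan is a dyadic splitting $\{m\geq\zeta n\}=\bigcup_{k\geq 0}\{m\in[2^k\zeta n,2^{k+1}\zeta n]\}$, and applying Lemma \ref{polyn} on each slab with $n':=2^{k+1}\zeta n$ and $\zeta':=1/2$. This produces a slab-wise uniform conditional bound of order $C(2^{k+1}\zeta n)^{-\mathfrak{p}/2}$, so summing the geometric series $\sum_{k\geq 0}2^{-(k+1)\mathfrak{p}/2}$ (convergent since $\mathfrak{p}/2>0$) preserves the $n^{-\mathfrak{p}/2}$ rate with a constant depending only on $\zeta,\epsilon,\mathfrak{p}$. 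Combined with the exponential bound on the other piece, this yields the claimed uniform $\mathfrak{p}/2$-polynomial decay for (ii). Part (iii) is identical after interchanging the roles of $I$'s and $J$'s: the hypothesis $\lambda/\rho<1/q_A$ (so $\rho/\lambda>q_A\geq J_j/J_i$) produces $\epsilon:=J_i\rho-J_j\lambda>0$, the positive-tail piece for $i\notin A$ is handled by Lemma \ref{exp_tau}, and the negative-tail piece for $j\notin A$ is handled via the same dyadic extension of Lemma \ref{polyn}.
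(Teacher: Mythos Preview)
Your argument matches the paper's: decompose via \eqref{decompose}, reduce each event to one or two martingale-tail events of the form $\{\bar\Lambda^R_k(m:\tau)\gtrless \epsilon m\}$, and invoke Lemma~\ref{exp_tau} for the favorable tail and Lemma~\ref{polyn} for the unfavorable one. You are in fact slightly more careful than the paper on one point: in parts (ii)--(iii) the paper simply cites Lemma~\ref{polyn} for the range $\exists\,m\geq\zeta n$, even though that lemma is stated only for $m\in[\zeta n,n]$; your dyadic splitting is exactly the routine step that closes this gap and preserves the $n^{-\mathfrak p/2}$ rate uniformly in $\tau$.
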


\begin{IEEEproof}
	(i) We only prove the claim for the  first and the third  conditional probability,   as the proofs  for the second and the fourth, respectively, are similar. By decomposition \eqref{decompose} we obtain 
	\begin{equation*}
		\begin{aligned}
			&\Pro_{A}\left(\exists\, m \geq \zeta n:\, \Lambda_{i}(m:\tau)<0,\, \pi_{i}(m:\tau) > \rho
			\, | \,\cF_{\tau} \right)\\
			&=\Pro_{A}(\exists \,  m \geq \zeta n :\, \bar{\Lambda}_{i}(m:\tau)<-I_{i} \, \pi_{i}(m:\tau) \, m, \, \pi_{i}(m:\tau) > \rho
			\,  | \, \cF_{\tau}) \\
			&\leq \Pro_{A}\left(\exists \,  m \geq \zeta \, n:\, \bar{\Lambda}_{i}(m:\tau)<-I_{i}\, \rho  \,  m \, | \, \cF_{\tau} \right).
		\end{aligned}
	\end{equation*}
	Similarly, 
	\begin{equation*}
		\begin{aligned}
			&\Pro_{A}\left(\exists\, m \geq \zeta  \,  n:\, \Lambda_{j}(m:\tau)> \Lambda_{i}(m:\tau),\, \pi_{i}(m:\tau) > \rho \,| \, \cF_{\tau} \right)\\
			&=\Pro_{A}\bigg{(} \exists \,  m \geq \zeta \,  n:  \, \bar{\Lambda}_{j}(m:\tau) -\bar{\Lambda}_{i}(m:\tau) >(I_{i}\pi_{i}(m:\tau)+J_{j}\pi_{j}(m:\tau))\, m, \pi_{i}(m:\tau) > \rho \, |\,  \cF_{\tau} \bigg{)} \\ 
			&\leq  \Pro_{A}\left(\exists \,  m \geq \zeta \, n:\, \bar{\Lambda}_{j}(m:\tau) -\bar{\Lambda}_{i}(m:\tau) >I_{i} \, \rho \, m
			\, | \, \cF_{\tau} \right)\\
			& \leq \Pro_{A}\left(\exists  \, m \geq \zeta \, n:\,\bar{\Lambda}_{j}(m:\tau)>I_{i}\, (\rho/2)\,  m \, |\, \cF_{\tau} \right)+\Pro_{A}\left(\exists \,  m \geq \zeta \,  n:\, -\bar{\Lambda}_{i}(m:\tau) >I_{i} \, (\rho/2)\,  m \,  |\,  \cF_{\tau} \right).
		\end{aligned}
	\end{equation*}
	In both cases, each term in the upper bound is uniformly exponentially decaying by Lemma \ref{exp_tau}, which proves the claim.
	
	For (ii) (similarly for (iii)), by decomposition \eqref{decompose},  the conditional probability in  \eqref{rf1} can be expressed as 
	\begin{equation*}
		\begin{aligned}
			\Pro_{A}&\Big{(}\exists \, m \in [\zeta n,n] : \bar{\Lambda}_{i}(m:\tau) {-}\bar{\Lambda}_{j}(m:\tau) >(I_{j} \pi_{j}(m:\tau) {-}I_{i}  \pi_{i}(m:\tau)\big) m, \, \pi_{i}(m:\tau) {<} \rho, \, \pi_{j}(m:\tau) {>} \lambda \,|\, \cF_{\tau} \Big{)} 
		\end{aligned}
	\end{equation*}
	and is bounded by
	\begin{equation*}
		\begin{aligned}
			&\Pro_{A}(\exists \, m \in [\zeta n,n]:\,\bar{\Lambda}_{i}(m:\tau) -\bar{\Lambda}_{j}(m:\tau) >(I_{j}\lambda -I_{i}\rho) m \, | \, \cF_{\tau})\\
			&\leq \Pro_{A}(\exists \, m \in [\zeta n,n]:\,\bar{\Lambda}_{i}(m:\tau) > (I_{j}\lambda -I_{i}\rho) \, m/  2  \,| \, \cF_{\tau} ) \\
			&\qquad + \Pro_{A}(\exists \, m \in [\zeta n,n]:\, -\bar{\Lambda}_{j}(m:\tau) >(I_{j} \, \lambda -I_{i} \, \rho) \,m/2 \, |\, \cF_{\tau} ),
		\end{aligned}
	\end{equation*}
	where $I_{j}\lambda -I_{i}\rho >0$ because $\lambda / \rho >q_{A}$. The first term on the right hand side is uniformly $\mathfrak{p}/2$-polynomially decaying by Lemma \ref{polyn}, and the second term is  uniformly exponentially decaying by Lemma \ref{exp_tau}, which proves the claim.\\
\end{IEEEproof}

In Lemma \ref{dd_l}, we provide a result about how large can be the maximum draw-down of an LLR after a stopping time $\tau \in \cT$.

\begin{lemma}\label{dd_l}
	Let  $\tau \in \cT$ and $x>0$, then  
	\begin{equation}
		\begin{aligned}
			\Pro_{A}\left( \Lambda_{i}(\tau) - \inf_{n \geq \tau }\Lambda_{i}(n)  > x\right) &\leq e^{-x}, \quad \forall \, i \in A, \\
			\Pro_{A}\left( \sup_{n \geq \tau}\Lambda_{i}(n) - \Lambda_{i}(\tau) > x\right) &\leq e^{-x}, \quad \forall \, i \notin A. 
		\end{aligned}
	\end{equation}
	Consequently, the random variables $\Lambda_{i}(\tau) - \inf_{n \geq \tau }\Lambda_{i}(n)$ and $\;\sup_{n \geq \tau}\Lambda_{i}(n) - \Lambda_{i}(\tau)$ have each moment bounded by a constant that depends only on the moment.
\end{lemma}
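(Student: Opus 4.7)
The plan is to reduce each tail bound to an application of Doob's maximal inequality to a non-negative exponential martingale shifted by $\tau$. For fixed $i \in A$, I would first observe that $M_i(n) := \exp(-\Lambda^R_i(n))$ is a non-negative $\{\mathcal{F}^R(n)\}$-martingale under $\Pro_A$ with $M_i(0) = 1$. This uses only that $R_i(n)$ is $\mathcal{F}^R(n-1)$-measurable and that $X_i(n)$ is $f_{1i}$-distributed and independent of $\mathcal{F}^R(n-1)$ under $\Pro_A$, so the conditional increment $\Exp_A[\exp(-g_i(X_i(n))\, R_i(n)) \mid \mathcal{F}^R(n-1)]$ equals $1$: trivially if $R_i(n) = 0$, and by the change-of-measure identity $\int (f_{0i}/f_{1i})\, f_{1i}\, d\nu_i = 1$ if $R_i(n) = 1$. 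Symmetrically, for $i \notin A$, the process $\exp(\Lambda^R_i(n))$ is a non-negative $\Pro_A$-martingale.

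Next, since $\tau$ is almost surely finite under $\Pro_A$, the shifted ratio $\tilde{M}(n) := M_i(\tau + n)/M_i(\tau)$ is a non-negative martingale with respect to $\{\mathcal{F}^R(\tau + n)\}$ starting from $\tilde{M}(0) = 1$; this is obtained routinely by decomposing on $\{\tau = k\} \in \mathcal{F}^R(k)$ and invoking the martingale property of $M_i$ at the deterministic time $k$. Doob's maximal inequality for non-negative martingales, extended to the infinite horizon by monotone convergence, then gives $\Pro_A\bigl(\sup_{n \geq 0} \tilde{M}(n) \geq e^x\bigr) \leq \Exp_A[\tilde{M}(0)]/e^x = e^{-x}$. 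Since $\sup_{n \geq 0} \tilde{M}(n) = \exp\bigl(\Lambda^R_i(\tau) - \inf_{n \geq \tau} \Lambda^R_i(n)\bigr)$, this is exactly the claimed bound for $i \in A$; the case $i \notin A$ is entirely analogous using $M_i(n) := \exp(\Lambda^R_i(n))$.

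For the moment claim, I would use the standard tail-integration identity: for any $p \geq 1$ and any non-negative random variable $Y$ satisfying $\Pro_A(Y > x) \leq e^{-x}$ for all $x > 0$, one has $\Exp_A[Y^p] = \int_0^\infty p\, x^{p-1}\, \Pro_A(Y > x)\, dx \leq \Gamma(p+1)$, which is a constant independent of $\tau$.

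There is no real technical obstacle here; the proof is expected to be short. The only subtle point is justifying the shift at the unbounded, but almost surely finite, stopping time $\tau$, which in discrete time is handled immediately by conditioning on $\{\tau = k\}$ and summing over $k \in \bN_0$.
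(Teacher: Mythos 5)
Your proposal is correct and follows essentially the same route as the paper: both arguments rest on the exponential martingale $\exp(-\Lambda^R_i(n:\tau))$ (resp. $\exp(\Lambda^R_i(n:\tau))$ for $i\notin A$), handle the almost surely finite stopping time $\tau$ by decomposing over $\{\tau=k\}$, and then apply the maximal inequality for non-negative (super)martingales — the paper cites it as Ville's inequality — followed by tail integration for the moment bound.
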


\begin{IEEEproof}
	We only prove the claim for $i \in A$, as the proof for $i \notin A$  is similar. We note that
	\begin{equation}
		\Lambda_{i}(\tau) - \inf_{n \geq \tau}\Lambda_{i}(n) = -\inf_{n \in \bN}\Lambda_{i}(n:\tau)= \sup_{n \in \bN}\left\{-\Lambda_{i}(n:\tau)\right\},
	\end{equation}
	and thus it suffices to show that for any $x >0$,
	\begin{equation}
		\Pro_{A}\left( \sup_{n \in \bN}\{-\Lambda_{i}(n:\tau)\} > x\right) = \Pro_{A}\left( \sup_{n \in \bN}\left\{ \exp( -\Lambda_{i}(n:\tau))  \right\} > e^x\right) \leq e^{-x}.
	\end{equation}
	Fix $x>0$. Since $\Pro_{A}(\tau < \infty)=1$, by the law of total probability we have
	\begin{equation}\label{a_tot}
		\Pro_{A}\left( \sup_{n \in \bN}\{-\Lambda_{i}(n:\tau)\} > x\right) =\sum_{m=0}^{\infty} \Pro_{A}\left( \sup_{n \in \bN}\left\{ \exp( -\Lambda_{i}(n:m))  \right\} > e^x, \tau=m\right).
	\end{equation}
	We note that for any $n,\,m \in \bN_{0}$, $X_{i}(n+1+m)$ is independent of $\cF_{n+m}$, and it has the same distribution as $X_{i}(1)$.
	Since $R_{i}(n+1+m) \in \{0,1\}$ and is $\cF_{n+m}$-measurable, it holds
	\begin{equation}
		\begin{aligned}
			\Exp_{A}&\left[\exp\{-\log\left( \frac{f_{i1}(X_{i}(n+1+m))}{f_{i0}(X_{i}(n+1+m))} \right)\, R_{i}(n+1+m)  \} \,| \,\cF_{n+m}\right] \\
			&=\Exp_{A} \left[\exp\left\{-\log\left( \frac{f_{i1}(X_{i}(1))}{f_{i0}(X_{i}(1))} \right)\right\} \right]^{R_{i}(n+1+m)}=1.
		\end{aligned}
	\end{equation}
	This implies that for fixed $m \in \bN_{0}$,  $\{\exp\{-\Lambda_{i}(n:m) \}\,:\, n\in \bN_{0}\}$ is a martingale with respect to $\{\cF_{n+m}\,:\,n \in \bN_{0}\}$. Thus, by Ville's supermartingale inequality, for any $m \in \bN_{0}$ we have
	\begin{equation*}
		\Pro_{A}\left(\sup_{n \in \bN}\left( \exp\{-\Lambda_{i}(n:m)\} \right)>e^x,\,\tau=m\right) \leq e^{-x}\, \Pro_{A}(\tau=m),
	\end{equation*}
	and summing over all $m \in \mathbb{N}_{0}$ we show the claim.  
\end{IEEEproof}	

In the following lemma, we provide an upper bound on the moments of the difference of two LLRs.

\begin{lemma}\label{lem:C5}
Suppose that the moment condition \eqref{mc1} holds for some $\mathfrak{p}\geq 2$, and let us fix $p \in [2, \mathfrak{p}]$. Then, there is a constant $C>0$ such that for all $i,j \in [M]$,
\begin{equation*}
 \Exp_{A}\left[ | \Lambda_i(n) - \Lambda_j(n) |^p \right]\leq C \, n^p, \quad \forall\, n \in \bN.
\end{equation*}
\end{lemma}

\begin{IEEEproof}
Without loss of generality, we fix  $i,j \in A$ and $n \in \bN$. By the definition of $\bar{\Lambda}_{i}$ in \eqref{decompose}, and the fact that $\pi_{i}(n) \leq 1$ for all $i \in [M]$, we have
\begin{equation*}
| \Lambda_i(n) - \Lambda_j(n) | \leq |\bar{\Lambda}_{i}(n)|+|\bar{\Lambda}_{j}(n) |+ ( I_i + I_j) n,
\end{equation*}
which further implies that
\begin{equation}\label{jens}
 \begin{aligned}
   \Exp_{A}\left[|\Lambda_i(n) - \Lambda_j(n) |^p \right] &\leq \Exp_{A}\left[\left(|\bar{\Lambda}_{i}(n)|+|\bar{\Lambda}_{j}(n) |+ ( I_i + I_j) n\right)^p \right]\\
   &\leq 3^{p-1} \left( \Exp_{A}\left[|\bar{\Lambda}_{i}(n)|^p \right] + \Exp_{A}\left[|\bar{\Lambda}_{j}(n)|^p \right] + ( I_i + I_j)^p n^p \right),
 \end{aligned}
\end{equation}
where the last inequality follows by Jensen's inequality. Since for each $k \in [M]$, $\{\bar{\Lambda}_{k}(n)\, :\, n \in \bN\}$ is a $\{\cF_{n} : n \in \bN\}$-martingale and $p \geq 2$, by Rosenthal's inequality \cite[Theorem 2.12]{Hall_Heyde} it follows that there is a $C_0 > 0$ such that
\begin{equation*}
	\Exp_{A}\left[\left|\bar{\Lambda}_{k}(n)\right|^{p}\right] \leq C_0 \, n^{p/2}, \quad \forall \; n \in \bN. 
\end{equation*}
Thus, by bounding the right-hand side of \eqref{jens}, we prove the claim.\\
\end{IEEEproof}

For our last lemma, we restore the superscript $R$ and recall the definition of a stabilized ordering rule $R^{A,m}$, for $m \in \bN$, defined in \eqref{repreA}, of the associated LLR $\Lambda_i^{A,m}$ defined in \eqref{llrA}, of the associated empirical sampling frequency $\pi_i^{A,m}$ defined in \eqref{piA}, and of the quantity
\begin{equation*}
	\widetilde{\Lambda}^{A,m}_i(u) := \Lambda^{A,m}_i(u) - u I_i \, \pi^{A,m}_i(u),\quad \forall \; u \in \bN_{0}, \quad \forall \; i \in A.
\end{equation*}
By the definition of $\Lambda^{A,m}_i(u)$, for all $u \in \bN_0$ we have
\begin{equation}\label{tilb}
  \begin{aligned}
  	\widetilde{\Lambda}^{A,m}_i(u) = \Lambda^{R}_i(m) + \sum_{k=1}^u  \log \left( \frac{f_{1i} (X_i(m+k))}{ f_{0i} (X_i(m+k))} \right) \, R^{A,m}_i(k) - u I_i \, \pi^{A,m}_i(u) = \Lambda^{R}_i(m) + \bar{\Lambda}^{A,m}_i(u),
  \end{aligned}
\end{equation}
where 
\begin{equation*}
\bar{\Lambda}^{A,m}_i(u) := \sum_{k=1}^u \left( \log \left( \frac{f_{1i} (X_i(m+k))}{ f_{0i} (X_i(m+k))} \right) - \Exp_{A}\left[ \log \left( \frac{f_{1i} (X_i(m+k))}{ f_{0i} (X_i(m+k))} \right) \right] \right) \, R^{A,m}_i(k),
\end{equation*}
and $\bar{\Lambda}^{A,m}_i(0):=0$. We note that $\{\bar{\Lambda}^{A,m}_i(u) \,:\, u \in \bN \}$ is a $\cF^{A,m}_{u}$-martingale, where
\begin{align*}
	\begin{split}
		\cF^{A,m}_{u} &:=
		\begin{cases} 
			\sigma(\{\Lambda^{R}_i(m) \,:\, i \in A \},\hat{Z}_m,\,\check{Z}_m), \quad &\text{if} \quad  u=0,\\
			\sigma\left(\cF^{A,m}_{u-1},\,\hat{Z}_{m+u},\,\check{Z}_{m+u},\, \{ X_{i}(m+u)\, :\, i \in R^{A,m}(u) \}  \right),  \quad &\text{if} \quad  u\in \bN.
		\end{cases}
	\end{split}
\end{align*}  

\begin{lemma}\label{tilA}
Suppose that the moment condition \eqref{mc1} holds for $\mathfrak{p} > 4$. Then, for an increasing sequence of integers $\{\zeta_n \, :\, n \in \bN\}$ that satisfies \eqref{zn2}, and for any $\epsilon >0$ it holds 
\begin{equation*}
 \sum_{n=1}^{\infty} \Pro_{A}\left(|\widetilde{\Lambda}^{A,\zeta_n}_{i}(n-\zeta_n)|>\epsilon \, (n-\zeta_n) \right) < \infty.
\end{equation*}
\end{lemma}

\begin{IEEEproof}
In view of \eqref{tilb}, we have
\begin{equation*}
  \begin{aligned}
   \Pro_{A}\left(|\widetilde{\Lambda}^{A,\zeta_n}_{i}(n-\zeta_n)|>\epsilon \, (n-\zeta_n) \right) \leq \, \Pro_{A}\left(|\Lambda^{R}_i(\zeta_n)|>\frac{\epsilon}{2} \, (n-\zeta_n) \right)+ \Pro_{A}\left(|\bar{\Lambda}^{A,\zeta_n}_i(n-\zeta_n)|>\frac{\epsilon}{2} \, (n-\zeta_n) \right).
  \end{aligned}
\end{equation*}
For the first term, by Markov's inequality
\begin{equation*}
 \begin{aligned}
  \Pro_{A}\left(|\Lambda^{R}_i(\zeta_n)|>\frac{\epsilon}{2} \, (n-\zeta_n) \right) \leq \frac{\Exp_{A}\left[|\Lambda^{R}_i(\zeta_n)|^{2+\theta}\right]}{(\epsilon/2)^{2+\theta}(n-\zeta_n)^{2+\theta}}
   \leq \frac{\zeta^{2+\theta}_n}{(\epsilon/2)^{2+\theta}(n-\zeta_n)^{2+\theta}}	
 \end{aligned}
\end{equation*}
where $\theta$ as in \eqref{zn2}, and the second inequality follows by Lemma \ref{lem:C5}. The bounding sequence is summable by assumption of \eqref{zn2}. For the second term, by Markov's inequality
\begin{equation}\label{sst1}
		\Pro_{A}\left(|\bar{\Lambda}^{A,\zeta_n}_i(n-\zeta_n)|>\frac{\epsilon}{2} \, (n-\zeta_n) \right) \leq \frac{\Exp_{A}\left[|\bar{\Lambda}^{A,\zeta_n}_i(n-\zeta_n)|^{\mathfrak{p}}\right]}{(\epsilon/2)^{\mathfrak{p}}(n-\zeta_n)^{\mathfrak{p}}}
\end{equation}
Since $\{ \bar{\Lambda}^{A,\zeta_n}_i(n-\zeta_n) \,:\, n \in \bN \}$ is a $\cF^{A,\zeta_n}_{n-\zeta_n}$-martingale, by Rosenthal's inequality \cite[Theorem 2.12]{Hall_Heyde} there is a constant $C>0$ such that
\begin{equation*}
 \Exp_{A}\left[|\bar{\Lambda}^{A,\zeta_n}_i(n-\zeta_n)|^{\mathfrak{p}}\right] \leq C \, (n-\zeta_n)^{\mathfrak{p}/2}
\end{equation*}
and since $\mathfrak{p} > 4$ we prove that \eqref{sst1} is summable.
\end{IEEEproof}

\section{}\label{consist_orde}
In this appendix, we provide the proof of a generalized version of Theorem \ref{thm_c}, which is also used in the proof of the asymptotic optimality results in Appendix \ref{vlbg11}. Throughout this appendix, we fix a subset of anomalies $A \in \cP_{\ell, u}$, and a sampling rule $R$. In order to lighten the notation we do not emphasize the dependence of the various statistics on the sampling rule $R$, thus we write  $\Lambda_i(n), \pi_i(n), \cF_{n}$ instead of  $\Lambda^R_i(n), \pi_i^R(n), \cF^R_{n}$. We recall from Appendix \ref{intrLLR} that  $\cT$ is the family of all $\Pro_{A}$-a.s. finite stopping times with respect to filtration $\left\{ \cF_{n} \,:\, n \in \bN \right\}$, and $\pi_{i}(n:m)$ is the empirical sampling frequency of source $i$ during $[m+1, m+n]$ defined in \eqref{pnm}.\\ 

\begin{theorem}\label{gen_thmc}
 Suppose that the moment condition \eqref{mc1} holds for some $\mathfrak{p} \geq 2$. Let $R$ be an ordering sampling rule. If the functions  $\hat{N}$, $\hat{G}$ satisfy
 \begin{align}\label{NGh_cons1}
 	\hat{N}(D) > |\hat{G}(D)|  
 	\quad \mbox{for all } \; D \in \cP_{l,u} \; \mbox{ such that }\; \hat{G}(D) \neq D \quad \text{and} \quad x(D)>0,
 \end{align}	
 and  the functions $\check{N}$, $\check{G}$ satisfy
 \begin{align}\label{NGch_cons1}
 	\check{N}(D) >|\check{G}(D)|   \quad \mbox{for all } \; D \in \cP_{l,u} \; \mbox{ such that }\; \check{G}(D) \neq D^c \quad \text{and} \quad  y(D)>0, 
 \end{align}	
 then there exist  $\rho \in (0,1)$ and $C>0$ such that for any $A \in \cP_{\ell,u}$, and any $\tau \in \cT$
 \begin{equation}\label{sh_c1}
 	\Pro_{A}\left(\pi_{i}(n:\tau) < \rho \, \big{|} \, \cF_{\tau} \right) \leq C \, n^{-\mathfrak{p}/2},  \qquad \forall \, n \in \bN,
 \end{equation}	
 for every $i \in A$ when  $x(A)>0$, and for every  $i \notin A$ when  $y(A)>0$.\\
\end{theorem}

Theorem \ref{gen_thmc} coincides with Theorem \ref{thm_c} for $\tau=0$. For the purposes of the proof, for any $n \in \bN$ and non-empty set $V\subseteq [M]$, we introduce the event on which the statistics in $A\setminus V$ are positive and greater than those in $V$,  and  the statistics in $A^c \setminus V$ are negative and smaller than those in $V$, i.e., 
\begin{equation}\label{eav}
	E_{A,V}(n):= \bigcap_{i \in A\setminus V} \bigcap_{j \in V}  \bigcap_{z \in  A^c \setminus V }\{ \Lambda_{i}(n) \geq \max\{0,\Lambda_{j}(n) \}  \geq \min\{ 0,\Lambda_{j}(n) \} \geq  \Lambda_{z}(n)  \}. 
\end{equation}
The proof of Theorem \ref{thm_c} relies on two results regarding this event, which are presented in Lemmas  \ref{e_c_g} and \ref{V_Z}. \\

According to the first one, the event $E_{A,V}(n)$ has high probability if at time $n$ there  have been collected relatively few samples from sources in $V$ and relatively many samples from sources not in $V$. To be precise,  for some $\rho,\, \zeta \in (0,1)$, that will  be selected appropriately later, we introduce the event
\begin{equation}\label{ga_v}
	\Gamma_{V}(n):=\left\{ \Pi_{V}(n)<\rho \right\}\bigcap \left\{ \Pi_{V^c}(n)> \zeta  \right\},
\end{equation}
where 
\begin{align*}
\left\{ \Pi_{V}(n)<\rho \right\}:=\{ \pi_{i}(n)<\rho,\, \forall\, i \in V \}, \qquad \left\{ \Pi_{V^c}(n)> \zeta  \right\} := \{ \pi_{i}(n)>\zeta,\, \forall\, i \notin V \}.
\end{align*}

\begin{lemma}\label{e_c_g} 
	Let  $V \subseteq [M]$ and $\rho,\, \zeta \in (0,1)$  such that $\zeta >q_{A}\, \rho$, where $q_{A}$ is defined in \eqref{qa}. Suppose condition \eqref{mc1} holds for some $\mathfrak{p} \geq 2$.  Then, 
	\begin{equation}
		\Pro_{A}\left( \bigcup_{m =\lceil n/2 \rceil}^{n}  \, E^{c}_{A,V}(m:\tau) \cap \Gamma_{V}(m:\tau)  \, \Big{|} \, \cF_{\tau} \right)
	\end{equation}
	is 	uniformly $\mathfrak{p}/2$-polynomially decaying.
\end{lemma}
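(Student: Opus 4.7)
\medskip

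\textbf{Proof plan for Lemma \ref{e_c_g}.} The plan is to expand the complement $E^c_{A,V}(m:\tau)$ as a finite union of elementary bad events indexed by triples $(i,j,z) \in (A\setminus V) \times V \times (A^c \setminus V)$ and then dispatch each one to the appropriate clause of Lemma \ref{lambda_tau}. Concretely, $E^c_{A,V}(m:\tau)$ occurs if and only if at least one of the following fails for some such triple: (a) $\Lambda^R_i(m:\tau) \geq 0$ for every $i \in A\setminus V$; (b) $\Lambda^R_z(m:\tau) \leq 0$ for every $z \in A^c \setminus V$; (c) $\Lambda^R_i(m:\tau) \geq \Lambda^R_j(m:\tau)$ for all $i \in A\setminus V$, $j \in V$; (d) $\Lambda^R_j(m:\tau) \geq \Lambda^R_z(m:\tau)$ for all $j \in V$, $z \in A^c \setminus V$. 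By a finite union bound, it suffices to show that each of the corresponding existential failure events over $m \in [\lceil n/2 \rceil, n]$, intersected with $\Gamma_V(m:\tau)$, has conditional probability given $\mathcal{F}^R(\tau)$ that is uniformly $\mathfrak{p}/2$-polynomially (or better) decaying.

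The first two classes of failures are the easy ones. On $\Gamma_V(m:\tau)$, every source outside $V$ is sampled at an empirical frequency exceeding $\zeta > 0$ on the window after $\tau$. For $i \in A\setminus V$, the failure $\Lambda^R_i(m:\tau) < 0$ together with $\pi^R_i(m:\tau) > \zeta$ is handled by the first conditional probability in Lemma \ref{lambda_tau}(i) (with $\zeta'=1/2$ and sampling floor $\zeta$), giving uniformly exponential decay. The case $z \in A^c \setminus V$ with $\Lambda^R_z(m:\tau) > 0$ is symmetric, using the second conditional probability of Lemma \ref{lambda_tau}(i).

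The comparison failures (c) and (d) require the quantitative gap condition $\zeta > q_A \rho$. Consider a failure $\Lambda^R_j(m:\tau) > \Lambda^R_i(m:\tau)$ of type (c) with $i \in A\setminus V$ and $j \in V$: on $\Gamma_V(m:\tau)$ we have $\pi^R_i(m:\tau) > \zeta$ and $\pi^R_j(m:\tau) < \rho$. If $j \in A^c$ then $i$ and $j$ are of opposite types, so we invoke the fourth conditional probability in Lemma \ref{lambda_tau}(i) (again with floor $\zeta$ on $\pi^R_i$), obtaining uniform exponential decay. If instead $j \in A$, then both indices belong to $A$ and we invoke Lemma \ref{lambda_tau}(ii) with its parameters set to $\rho_{\text{lem}}=\rho$ and $\lambda_{\text{lem}}=\zeta$; the required hypothesis $\lambda_{\text{lem}}/\rho_{\text{lem}} = \zeta/\rho > q_A$ is \emph{exactly} the standing assumption, yielding uniform $\mathfrak{p}/2$-polynomial decay. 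The type (d) failures $\Lambda^R_z(m:\tau) > \Lambda^R_j(m:\tau)$ with $z \in A^c\setminus V$, $j \in V$ are treated analogously: when $j \in A$ the opposite-type third item of Lemma \ref{lambda_tau}(i) applies (exponential), and when $j \in A^c$ we use Lemma \ref{lambda_tau}(iii) with the roles $\rho_{\text{lem}} = \zeta$ and $\lambda_{\text{lem}} = \rho$, for which $\lambda_{\text{lem}}/\rho_{\text{lem}} = \rho/\zeta < 1/q_A$ is again equivalent to $\zeta > q_A \rho$.

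The main (and only) obstacle is the within-type comparisons in (c) and (d) with $j \in A$ or $j \in A^c$ respectively; here the best available bound is polynomial rather than exponential, which is why the conclusion of the lemma is only polynomial. Conceptually, one needs the empirical-sampling gap $\zeta/\rho$ to exceed the worst-case divergence ratio $q_A$ so that the drift of $\Lambda^R_j - \Lambda^R_i$ (or $\Lambda^R_z - \Lambda^R_j$) has the correct sign, making the martingale fluctuation bound of Lemma \ref{polyn} applicable through Lemma \ref{lambda_tau}. Taking a maximum of the finitely many constants $C$ produced by the various invocations yields a single $C$ valid uniformly over $\tau \in \cT_A$ and $n \in \bN_0$, completing the proof.
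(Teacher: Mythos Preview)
Your proof is correct and follows essentially the same route as the paper: decompose $E^c_{A,V}$ into the finitely many elementary comparison failures, intersect with $\Gamma_V$, and dispatch each to the appropriate clause of Lemma~\ref{lambda_tau}; the paper simply pre-splits $V$ into $V\cap A$ and $V\setminus A$ to list six cases, whereas you do this split inside your cases (c) and (d). One cosmetic correction: in your cross-type cases you have swapped the labels ``third'' and ``fourth'' of Lemma~\ref{lambda_tau}(i) --- for (c) with $j\in A^c$ you need the clause with the floor on $\pi^R_i$ (the third, as your parenthetical already says), and for (d) with $j\in A$ you need the clause with the floor on $\pi^R_z$ (the fourth), since on $\Gamma_V$ it is the source outside $V$ whose empirical frequency exceeds $\zeta$.
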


\begin{IEEEproof}
	For  $m \in [n/2,n]$,  one of the following holds on the event $E^{c}_{A,V}(m:\tau)$:
	\begin{itemize}
		\item $\exists \; i \in A \setminus V$ and $j \in V\cap A$ such that $\Lambda_{i}(m:\tau)<\Lambda_{j}(m:\tau)$,  \\
		
		\item $ \exists \; i \in A \setminus V$ and $k \in V\setminus A$ such that $\Lambda_{i}(m:\tau)<\Lambda_{k}(m:\tau)$,\\
		
		\item  $ \exists \;i \in A \setminus V$ such that $\Lambda_{i}(m:\tau)<0$, \\ 
		
		\item $ \exists \; j \in V\cap A$ and $z \in A^c \setminus V$ such that $\Lambda_{j}(m:\tau)<\Lambda_{z}(m:\tau)$, \\
		
		\item  $\exists \; k \in V\setminus A$ and $z \in A^c \setminus V$ such that $\Lambda_{k}(m:\tau)<\Lambda_{z}(m:\tau)$, \\
		
		\item $\exists \; z \in A^c \setminus V$ such that $\Lambda_{z}(m:\tau)>0$. 
	\end{itemize}
	As a result, by Boole's inequality, it suffices to show that for any $i\, \in\, A\setminus V$, $j\, \in\, V\cap A$, $k\, \in \, V\setminus A$, $z \, \in A^c\setminus V$, 	
	\begin{equation}
		\begin{aligned}
			&\Pro_{A}\left(\exists\, m\in[n/2, n]:\, \Lambda_{i}(m:\tau)< \Lambda_{j}(m:\tau),\, \Gamma_{V}(m:\tau) \, \Big{|}  \, \cF_{\tau} \right) \\
			&\Pro_{A}\left(\exists\, m\in[n/2, n]:\, \Lambda_{i}(m:\tau)< \Lambda_{k}(m:\tau),\, \Gamma_{V}(m:\tau) \, \Big{|} \, \cF_{\tau} \right) \\
			&\Pro_{A}\left(\exists\, m\in[n/2,n]:\,  \Lambda_{i}(m:\tau)< 0,\,\Pi_{V^c}(m:\tau)> \zeta \Big{|} \, \cF_{\tau} \, \right) \\
			&\Pro_{A}\left(\exists\, m\in[n/2, n]:\, \Lambda_{j}(m:\tau) <  \Lambda_{z}(m:\tau),\, \Gamma_{V}(m:\tau) \, \Big{|}  \, \cF_{\tau} \right)\\
			&\Pro_{A}\left(\exists\, m\in[n/2, n]:\, \Lambda_{k}(m:\tau) <  \Lambda_{z}(m:\tau),\, \Gamma_{V}(m:\tau) \, \Big{|} \, \cF_{\tau} \right) \\
			&\Pro_{A}\left( \exists\, m\in[n/2,n]:\, \Lambda_{z}(m:\tau)> 0,\,\Pi_{V^c}(m:\tau)> \zeta \,  \Big{|} \, \cF_{\tau} \right)
		\end{aligned}
	\end{equation}
	are	uniformly $\mathfrak{p}/2$-polynomially decaying. This is indeed the case by Lemma \ref{lambda_tau}.\\
\end{IEEEproof}

According to the second result, if the sampling rule $R$ satisfies conditions \eqref{NGh_cons1}-\eqref{NGch_cons1}, then for any $V \subseteq [M]$ that either intersects $A$ when $x(A)>0$ or $A^c$ when $y(A)>0$, it is very unlike to have few samples from $V$ when the event $E_{A,V}(m)$ occurs for every $m \in [n/2,n]$, especially for large $n$. For the following lemma, we consider the quantity 
\begin{equation*}
 \delta:= 1 \wedge \min\limits_{D}\{\hat{N}(D) - |\hat{G}(D)|\} \wedge \min\limits_{D}\{ \check{N}(D) - |\check{G}(D)| \},
\end{equation*}
where the minimum is considered over all $D \in \cP_{\ell, u}$ such that $\hat{G}(D) \neq D$ and $x(D)>0$ (resp. $\check{G}(D) \neq D^c$ and $y(D)>0$).

\begin{lemma}\label{V_Z}
	Suppose the sampling rule $R$ satisfies conditions \eqref{NGh_cons1}-\eqref{NGch_cons1}, and 
	\begin{equation}\label{cond_vz}
		\mbox{either}  \;\; A\cap V \neq \emptyset 
		\quad \mbox{when} \quad  x(A)>0 \; \;\
		\qquad \mbox{or} \qquad A^c \cap V \neq \emptyset \quad \mbox{when} \; y(A)>0.
	\end{equation} 
	\begin{enumerate}
		\item[(i)] For any $m \in \bN$, on the event $E_{A,V}(m)$, at least one source in $V$ is sampled at time $m+1$ with probability at least  $\delta$.
		
		\item[(ii)] If $\rho < \delta/(2|V|)$, then 
		\begin{equation}\label{cla}
			\Pro_{A}\left( \Pi_{V}(n:\tau)<\rho,\, \bigcap_{m =\lceil n/2 \rceil}^{n} E_{A,V}(m:\tau) \, | \, \cF_{\tau} \right) 
		\end{equation}
		is uniformly exponentially decaying.
	\end{enumerate}
\end{lemma}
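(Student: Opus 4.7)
The plan is to first prove part (i) by exploiting the rigid structure that $E_{A,V}(m)$ imposes on $\Delta^R(m)$, and then lift it to part (ii) via an Azuma–Hoeffding concentration bound.

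For part (i), the sandwich in the definition of $E_{A,V}(m)$ forces the LLRs of $A\setminus V$ to be nonnegative and to dominate those of every $V$-source, which in turn dominate those of every $A^c\setminus V$-source, the latter being nonpositive. Plugging this ordering into the decision rule \eqref{gi_decision rule}, and using $|A\setminus V|\leq u$ together with $|\Delta^R(m)|\in[\ell,u]$, forces $D:=\Delta^R(m)$ to be of the form $(A\setminus V)\cup W$ for some random $W\subseteq V$. The trivial case $V\cap(\hat G(m)\cup\check G(m))\neq\emptyset$ already gives conditional sampling probability $1$, so suppose $V$ is disjoint from $\hat G(m)\cup\check G(m)$. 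Then the sandwich yields: if $W\neq\emptyset$, the smallest LLR in $D\setminus\hat G(m)$ lies in $W\subseteq V$; if $W\neq V$, the largest LLR in $D^c\setminus\check G(m)$ lies in $V\setminus W\subseteq V$. Since $W\cup(V\setminus W)=V\neq\emptyset$, one of these holds. Invoking the corresponding implication of \eqref{cnd1} yields $\hat N(m)-|\hat G(m)|>\delta$ or $\check N(m)-|\check G(m)|>\delta$, and the randomized counts in \eqref{sta}--\eqref{stb} directly give that the identified $V$-source is sampled at time $m+1$ with conditional probability at least $\delta$.

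Invoking \eqref{cnd1} on the smallest (resp. largest) side additionally requires $x(D)>0$ (resp. $y(D)>0$); using $|D|=|A\setminus V|+|W|$ and Proposition \ref{corl}(iii)--(iv), this reduces to $|D|>\ell$ or $|D|<u$, which holds in all ``interior'' subcases for $W$. Only two extreme configurations remain: $W=V$ with $V\subseteq A$ (where only the smallest side is applicable), and $W=\emptyset$ with $V\subseteq A^c$ (where only the largest side is applicable). In both, $D=A$, and the hypothesis \eqref{cond_vz} --- Scenario 1 in the first case, Scenario 2 in the second --- directly supplies $x(A)=x(D)>0$ or $y(A)=y(D)>0$, closing the argument.

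For part (ii), set $Y_m:=\mathbf{1}\{R(m+1)\cap V\neq\emptyset\}$. I would first establish the shifted analogue of part (i) by repeating its structural analysis with $\Lambda^R_i(m)$ replaced by the incremental $\Lambda^R_i(m:\tau)$ that appears in $E_{A,V}(m:\tau)$: on $E_{A,V}(m:\tau)$, the conditional probability that $Y_{m+\tau}=1$ is at least $\delta$. Letting $\xi_m:=Y_{m+\tau}-\delta\mathbf{1}_{E_{A,V}(m:\tau)}$, which then has nonnegative conditional mean, form the bounded martingale $M_n:=\sum_{m=\lceil n/2\rceil}^n\bigl(\xi_m-\Exp_A[\xi_m\mid\mathcal F^R(m+\tau-1)]\bigr)$. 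Azuma--Hoeffding yields that $\Pro_A(M_n\leq -cn\mid\mathcal F^R(\tau))$ is uniformly exponentially decaying in $n$ for every $c>0$. On the event in \eqref{cla}, $\Pi_V(n:\tau)<\rho$ forces $\sum_{m=\lceil n/2\rceil}^n Y_{m+\tau}<|V|n\rho$, while $\bigcap_{m=\lceil n/2\rceil}^n E_{A,V}(m:\tau)$ gives $\sum_{m=\lceil n/2\rceil}^n\mathbf{1}_{E_{A,V}(m:\tau)}\geq n/2$, so $M_n\leq|V|n\rho-\delta n/2$; the hypothesis $\rho<\delta/(2|V|)$ makes this strictly negative with a linear-in-$n$ gap, and the Azuma tail closes the argument. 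The main technical hurdle is the shifted analogue of part (i), since the sandwich on incremental LLRs does not pass automatically to the actual LLRs that drive $\Delta^R(\tau+m)$; resolving this requires repeating the structural analysis above with shifted LLRs and verifying that the same Proposition \ref{corl}-based case analysis carries over.
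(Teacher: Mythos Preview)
Your part~(i) argument is correct and in the same spirit as the paper's (which fixes one scenario of~\eqref{cond_vz} and branches on whether $x(D)$ or $y(D)$ vanishes, rather than on $W=D\cap V$), modulo one imprecision: when $\ell=u$ and $\emptyset\subsetneq W\subsetneq V$ with $|W|=|A\cap V|$, neither $|D|>\ell$ nor $|D|<u$ holds, so for those $W$ you must invoke Proposition~\ref{corl}(i) rather than (iii)--(iv) to conclude that at least one of $x(D),y(D)$ is positive.

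The genuine gap is in part~(ii). You correctly single out the ``shifted analogue of~(i)'' as the crux, but your proposed fix---repeating the structural analysis with $\Lambda^R_i(m:\tau)$ in place of $\Lambda^R_i(m)$---cannot succeed. The sampling decision $R(\tau+m+1)$ is determined by $\Delta^R(\tau+m)$ and the ordering of the \emph{actual} statistics $\Lambda^R_i(\tau+m)=\Lambda^R_i(\tau)+\Lambda^R_i(m:\tau)$, while $E_{A,V}(m:\tau)$ constrains only the increments; since the $\cF^R(\tau)$-measurable offsets $\Lambda^R_i(\tau)$ are otherwise arbitrary, the incremental sandwich gives no control over the order of the actual LLRs. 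Concretely, on $E_{A,V}(m:\tau)$ one can have every source in $V$ lie away from both the smallest LLRs in $\Delta^R(\tau+m)\setminus\hat{\mathcal G}(\tau+m)$ and the largest LLRs in its complement, so that no $V$-source is sampled at $\tau+m+1$; the ``shifted~(i)'' is therefore false, and redoing the case analysis with shifted LLRs would only describe a fictitious decision set, not the one the rule uses. The paper's proof of~(ii) does not attempt this route: it applies part~(i) directly at each absolute time to obtain the inclusion $E_{A,V}(m)\subseteq\{\sum_{i\in V}R_i(m+1)\ge Z_0(m)\}$ with $\{Z_0(m)\}$ i.i.d.\ Bernoulli$(\delta)$, and then bounds~\eqref{cla} by a Chernoff tail for the i.i.d.\ sum, using only that $\{Z_0(\tau+m):m\ge 1\}$ is again i.i.d.\ Bernoulli$(\delta)$ and independent of $\cF^R(\tau)$.
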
 

\begin{IEEEproof}
	We prove the lemma when $x(A)>0$ and  $A\cap V \neq \emptyset$. If this  is not true, then by \eqref{cond_vz} it follows that $y(A)>0 $ and $A^c \cap V \neq \emptyset$ and the proof follows in the same way.\\
	
	(i) We fix $m \in \bN$. It is clear that at least one source in $V$ is sampled at time $m+1$ when either $\hat{G}(\Delta_m)$ or  $\check{G}(\Delta_m)$ intersects with $V$. Therefore, it suffices to show that at least one source in $V$ is sampled at time $m+1$, at least with probability $\delta>0$, also on the event 
	\begin{equation}\label{ev1}
		E_{A,V}(m) \cap \{ V \subseteq \hat{G}(\Delta_m)^c \cap \check{G}(\Delta_m)^c \}.
	\end{equation}
	
	By the definition of the ordering sampling rule, it suffices to show that on this event one of the following holds:
	\begin{itemize}
		\item $\hat{N}(\Delta_m)-| \hat{G}(\Delta_m) | \geq \delta$ and the source with the smallest  LLR in $\Delta_m \setminus \hat{G}(\Delta_m) \neq \emptyset$ is in $V$,\\
		\item 
		$\check{N}(\Delta_m) - | \check{G}(\Delta_m) | \geq \delta$ and  the source with the largest LLR in  $(\Delta_m)^c \setminus \check{G}(\Delta_m) \neq \emptyset$ is in $V$.
	\end{itemize}
	Equivalently, in view of conditions \eqref{NGh_cons1}-\eqref{NGch_cons1}, it suffices to show that on the event \eqref{ev1} one of the following holds:
	\begin{itemize}
		\item $x(\Delta_m)>0$   and    the source with the smallest  LLR in $\Delta_m\setminus \hat{G}(\Delta_m) \neq \emptyset$ is  in $V$, \\
		
		\item $y(\Delta_m)>0$ and   the source with the largest  LLR in  $(\Delta_m)^c \setminus \hat{G}(\Delta_m) \neq \emptyset$ is in $V$.
	\end{itemize}
	By definition one of $x(D)$, $y(D)$ is positive, for every $D \in \cP_{\ell, u}$, which implies
	\begin{equation*}
		x(D)=0\;\Rightarrow\; y(D)>0, \quad \mbox{ and } \quad y(D)=0\;\Rightarrow\; x(D)>0.
	\end{equation*}
	Thus, it suffices to show that for every $D \in \cP_{\ell, u}$ and $G,\, G' \subseteq [M]$, on the event
	\begin{equation}\label{ev2}
		E_{A,V}(m) \cap \{ V \subseteq \hat{G}(\Delta_m)^c \cap \check{G}(\Delta_m)^c \}\cap \{\Delta_m=D\} \cap \{ \hat{G}(\Delta_m) = G\} \cap \{ \check{G}(\Delta_m) = G'\}
	\end{equation}
	the following claims hold:
	
	\begin{itemize}
		\item[(a)] If $D \setminus G \neq \emptyset$ and $y(D)=0$, then the source with the smallest LLR in $D \setminus G$ is in $V$,\\
		
		\item[(b)] If $D^c \setminus G' \neq \emptyset$ and $x(D)=0$, then the source with the largest LLR in $D^c \setminus G'$ is in  $V$,\\
		
		\item[(c)] If $x(D)>0$ and $y(D)>0$, then either the source with the smallest LLR in $D \setminus G$ or the source with the largest LLR in $D^c \setminus G'$ is in $V$.\\
	\end{itemize}
	
	In what follows, we assume that the event \eqref{ev2} is non-empty, otherwise the claim holds trivially. We note that this implies that  at least one of  the sets $D \setminus G$ and $D^c \setminus G'$ is non-empty.  Otherwise, we would have $D=G$, $D^c = G'$ and $V=\emptyset$, which would contradict the assumption that $A\cap V \neq \emptyset$. We continue with the proof of each one of the claims (a)-(c) on the event \eqref{ev2}.\\	
	
	(a) Suppose $D \setminus G \neq \emptyset$ and $y(D)=0$.  Then, by Proposition \ref{corl}(ii) it holds $|D|=u$. If $|D|=u >\ell$, by definition of $\Delta_m$ in \eqref{gi_decision_rule}, it follows that $D$ consists of the sources with the $u$ largest \textit{positive} LLRs at time $m$, and by definition of $E_{A,V}(m)$ it follows that at time $m$, we have $A \setminus V \subseteq D$ and $D \cap (A^c \setminus V)=\emptyset$. If $|D|=u =\ell$, by definition of $\Delta_m$ in \eqref{gap_decision rule}, it follows that $D$ consists of the sources with the $u$ largest LLRs at time $m$. By definition of $E_{A,V}(m)$, and since $|A|=|D|$ it follows that at time $m$, we have $D \subseteq (A \setminus V) \cup V \Leftrightarrow D \cap (A^c \setminus V)=\emptyset$. In both cases $D \cap (A^c \setminus V)=\emptyset$. Since $|A| \leq u$ and by assumption $A \cap V \neq \emptyset$, we have $|A \setminus V| < u =|D|$, which further implies that $D\cap V \neq \emptyset$. On the event \eqref{ev2}, we have $V \subseteq G^c \cap (G')^c$, and since $D\cap V \neq \emptyset$, we deduce that the source with the smallest LLR in $D \setminus G$ is in $V$.\\
	
	(b) Suppose $D^c \setminus G' \neq \emptyset$ and $x(D)=0$. Then, by Proposition \ref{corl}(i) it holds $|D|=\ell \Leftrightarrow |D^c|=M-\ell$. By the symmetric argument as that of case (a) we show that $A^c \setminus V \subseteq D^c$ and $D^c \cap (A \setminus V)=\emptyset$. Since $|A|\geq \ell \Leftrightarrow |A^c| \leq M-\ell$, we have $|A^c \setminus V| \leq M-\ell$. It suffices to show that $|A^c \setminus V| < M-\ell$, and since $|D^c|=M-\ell$ this would imply that $D^c\cap V \neq \emptyset$. Then, by the same argument as that of case (a), we deduce that the source with the largest LLR in $D^c \setminus G'$ is in $V$. To prove that $|A^c \setminus V| < M-\ell$, we proceed by contradiction. Indeed, if $|A^c \setminus V| = M-\ell$ then $A^c \setminus V = A^c$ and since $A^c \setminus V \subseteq D^c$ we deduce that $A^c \subseteq D^c$, which further implies that $|A^c|= |D^c| = M-\ell$ and thus $A^c = D^c$ or equivalently $A=D$. The latter implies that $x(A)=x(D)$, but $x(A)$ is assumed to be positive and $x(D)$ to be equal to $0$, which is a contradiction.\\
	
	(c) Let $x(D)>0$ and $y(D)>0$. If $|D|=u$, the result follows from (a), whereas if $|D|=\ell$ the result follows from (b). Thus, it suffices to consider the case where $\ell < |D| < u$. In this case, by definition of $\Delta_m$ in \eqref{gi_decision_rule}, it follows that $D$ consists of the sources with positive LLRs at time $m$. By the definition of $E_{A,V}(m)$ it follows that $A \setminus V \subseteq D$, and $A^c \setminus V \subseteq D^c$, where at least one of the previous two inclusions is strict $(\subset)$ because otherwise we would have $V=\emptyset$. This implies that $D \cap V = \emptyset$ or $D^c \cap V = \emptyset$, and since on the event \eqref{ev2} we have $V \subseteq G^c \cap (G')^c$, we deduce that either the source with the smallest LLR in $D \setminus G$ or the source with the largest LLR in $D^c \setminus G'$ is in $V$.\\ 
	
	(ii) By (i) it follows that there is a sequence $\{ Z_0(m)\,:\, m\in \mathbb{N} \}$ of iid Bernoulli random variables with parameter $\delta$ such that for every $m \in \bN$, 
	\begin{equation}\label{ez}
		E_{A,V}(m) \subseteq \left\{ \sum_{ i \in V} R_{i}(m+1) \geq Z_0(m)  \right\}. 
	\end{equation}	
	Let $\tau \in \cT$, $n \in \mathbb{N}$, and let $W_{V}(n/2;\tau)$ denote the total number of samples from the sources in $V$ during $[\tau+n/2,\tau+n]$, i.e.,
	\begin{equation}
		W_{V}(n/2;\tau):=\sum_{m=\lceil n/2 \rceil }^{n} \sum_{i \in V} R_{i}(\tau+m).
	\end{equation}
	Then, by (i) it follows that 
	\begin{equation}
		\bigcap_{m =\lceil n/2 \rceil}^{n} E_{A,V}(m:\tau) \subseteq \left\{ W_{V}(n/2;\tau) \geq \sum_{m=\lceil n/2 \rceil}^{n} Z_0(\tau+m)  \right\}.
	\end{equation}
	If  $\rho < \delta/(2|V|)$, then there exists $\epsilon \in (0,\delta)$ such that $\rho < (\delta - \epsilon)/(2|V|)$. Consequently, on the event $\{ \Pi_{V}(n:\tau)<\rho \}$ we  have 
	$$W_V(n/2;\tau) \leq \sum_{i \in V} n\,\pi_{i}(n:\tau) \leq \rho\, |V|\, n  \leq  (\delta - \epsilon) \lceil n/2 \rceil. $$
	
	Combining the above, we  obtain
	\begin{equation*}
		\left\{ \Pi_{V}(n:\tau)<\rho,\, \bigcap_{m =\lceil n/2 \rceil}^{n} E_{A,V}(m:\tau)  \right\} \subseteq  \left\{ (\delta - \epsilon) \lceil n/2 \rceil \geq  \sum_{m=\lceil n/2 \rceil}^{n} Z_0(m+\tau) \right\}.
	\end{equation*}
	Consequently, 
	\begin{equation}\label{ofe}
		\begin{aligned}
			&\Pro_{A}\left( \Pi_{V}(n:\tau)<\rho,\, \bigcap_{m =\lceil n/2 \rceil}^{n} E_{A,V}(m:\tau) \Big{|} \cF_{\tau} \right)\\
			&\leq \Pro_{A}\left( (\delta - \epsilon) \lceil n/2 \rceil \geq  \sum_{m=\lceil n/2 \rceil}^{n} Z_0(m+\tau) \Big{|} \cF_{\tau} \right) =	\Pro_{A}\left( (\delta - \epsilon) \lceil n/2 \rceil \geq  \sum_{m=\lceil n/2 \rceil}^{n} Z_0(m) \right),
		\end{aligned}
	\end{equation}
	where the equality holds because $\tau$ is a stopping time, $\{ Z_0(m)\,:\, m\in \bN_{0} \}$ is an iid sequence, and thus by \cite[Theorem 4.1.3]{durrett2010probability} it follows that $\{ Z_0(m+\tau)\,:\, m\in \bN_{0} \}$ is iid, independent of $\cF_{\tau}$, and  has the same distribution as $\{ Z_0(m)\,:\, m\in \bN_{0} \}$. The upper bound is independent of $\tau$, and by the Chernoff bound it follows that it is exponentially decaying, which completes the proof.\\
\end{IEEEproof}

\begin{IEEEproof}[Proof of Theorem \ref{gen_thmc}]
	In order to show \eqref{sh_c} for every $i \in A$ when $x(A) > 0$, and for every $i \notin A$ when $y(A) > 0$, it suffices to show that for each $V \subseteq [M]$ that satisfies \eqref{cond_vz}, there exist constants $\rho \in (0,1)$ and $C>0$ such that for all $n \in \bN$ and $\tau \in \cT$,
	\begin{equation*}
		\Pro_{A}\left( \Pi_{V}(n:\tau)<\rho \, \Big{|} \, \cF_{\tau} \right) \leq C\, n^{-\mathfrak{p}/2}, \quad \mbox{a.s.},
	\end{equation*}
	i.e., the left-hand term is uniformly $\mathfrak{p}/2$-polynomially decaying, as defined in \eqref{pol}. As a result, for  $V$ with size $|V|=1$, we obtain Theorem \ref{thm_c}.
	
	In view of Lemma \ref{V_Z}(ii), it suffices to show that for every $V \subseteq [M]$ that satisfies \eqref{cond_vz}, there exists a $\rho \in (0,1)$ such that
	\begin{equation}\label{kla}
		\Pro_{A}\left( \Pi_{V}(n:\tau)<\rho,\, \bigcup_{m =\lceil n/2 \rceil}^{n} E^{c}_{A,V}(m:\tau) \, \Big{|} \, \cF_{\tau} \right)
	\end{equation} 
	is uniformly $\mathfrak{p}/2$-polynomially decaying. 
	
	To this end, we observe that for any $\rho \in (0,1)$ and $V \subseteq [M]$,
	\begin{equation}\label{r_2r}
		\{ \Pi_{V}(n:\tau)<\rho \} \subseteq \bigcap_{m =\lceil n/2 \rceil}^{n} \{ \Pi_{V}(m:\tau)<2\rho \}.
	\end{equation}
	Indeed, on the event $\{ \Pi_{V}(n:\tau)<\rho \}$, we have $\pi_{i}(n:\tau)< \rho$ for all $i \in V$.  Thus, for any $m \in [n/2,\,n]$ we obtain
	\begin{equation*}
		\begin{aligned}
			(n/2) \,	\pi^{R}_{i}(m:\tau) \leq m \, \pi_{i}(m:\tau)=\sum_{u=1+\tau}^{m+\tau}R_{i}(u)\leq \sum_{u=1+\tau}^{n+\tau}R_{i}(u)=\pi_{i}(n:\tau) \, n<\rho \, n.
		\end{aligned}
	\end{equation*}
	Therefore, by \eqref{r_2r} it follows that the sequence in \eqref{kla} is bounded by
	\begin{equation*}
		\begin{aligned}
			\Pro_{A}&\left( \bigcap_{m =\lceil n/2 \rceil}^{n} \{ \Pi_{V}(m:\tau)<2\rho \},\, \bigcup_{m =\lceil n/2 \rceil}^{n} E^{c}_{A,V}(m:\tau) \, \Big{|} \, \cF_{\tau} \right)\\
			&\leq  \Pro_{A}\left( \bigcup_{m =\lceil n/2 \rceil}^{n} \{ \Pi_{V}(m:\tau)<2\rho \} \cap E^{c}_{A,V}(m:\tau) \, \Big{|} \, \cF_{\tau} \right),
		\end{aligned}
	\end{equation*}
	which is further bounded by the sum
	\begin{equation}\label{up_boun}
		\begin{aligned}
			\Pro_{A}&\left( \bigcup_{m =\lceil n/2 \rceil}^{n} \{ \Pi_{V}(m:\tau)<2\rho,\, \Pi_{V^c}(m:\tau)>\zeta \} \cap E^{c}_{A,V}(m:\tau) \, \Big{|} \, \cF_{\tau} \right)\\
			&+ \Pro_{A}\left( \bigcup_{m =\lceil n/2 \rceil}^{n} \{ \Pi_{V}(m:\tau)<2\rho\} \cap \{\Pi_{V^c}(m:\tau)>\zeta \}^c \, \Big{|} \, \cF_{\tau} \right),
		\end{aligned}
	\end{equation}
	for any choice of $\zeta \in (0,1)$. By Lemma \ref{e_c_g}, the first term in \eqref{up_boun} is uniformly $\mathfrak{p}/2$-polynomially decaying for any $\rho,\, \zeta >0$ such that $\zeta > q_{A}2\rho$. 
	
	However, in order to show that the second term in \eqref{up_boun} is also uniformly $\mathfrak{p}/2$-polynomially decaying, $\zeta$ and $\rho$ must be selected depending on the size of $V$, i.e., $\rho_{v}$, $\zeta_{v}$ for each $v \in [M]$. Of course, for each $v \in [M]$, $\rho_{v}$, $\zeta_{v}$ must satisfy
	\begin{equation}\label{cndv}
		\rho_{v}<\delta/(2v),\quad \mbox{ and } \quad  \zeta_{v} > 2 \rho_{v}q_{A},
	\end{equation}
	as the first condition guarantees that \eqref{cla} is uniformly exponentially decaying by Lemma \ref{V_Z}(ii), and the second  guarantees that the first term in \eqref{up_boun} is uniformly $\mathfrak{p}/2$-polynomially decaying by Lemma \ref{e_c_g}. We will show that for each $v \in [M]$, if  we select $\rho_{v}$ and $\zeta_{v}$ such that 
	in addition to \eqref{cndv},
	\begin{equation}\label{mq}
		\max\{4\rho_{v},\, 2\zeta_{v} \}< \rho_{v+1},
	\end{equation}
	where $\rho_{M+1}:=\infty$, then the second term in \eqref{up_boun} is uniformly $\mathfrak{p}/2$-polynomially decaying for every $V \subseteq [M]$ with $|V|=v$ that satisfies \eqref{cond_vz}. In order to show this, we will apply a backward  induction argument, starting from $v=M$ down to $v=1$. \\
	
	If $v=M$, then $V=[M]$ or, equivalently, $V^c=\emptyset$, and  as a result, the second term in \eqref{up_boun} is trivially equal to zero for any $\rho,\, \zeta \in (0,1)$. Thus, in order to guarantee that 
	$$ \Pro_{A}\left(\Pi_{[M]}(n:\tau)< \rho_{M} \, | \, \cF_{\tau} \right)$$ 
	is uniformly $\mathfrak{p}/2$-polynomially decaying, it suffices to select $\rho_{M},\, \zeta_{M}$ that satisfy \eqref{cndv}.
	
	Now, \textit{suppose that the claim holds for $v+1$}, i.e., there exist $\rho_{v+1},\, \zeta_{v+1} \in (0,1)$ such that the second term in \eqref{up_boun} is uniformly $\mathfrak{p}/2$-polynomially decaying for any $|V|=v+1$ that satisfies \eqref{cond_vz}. We will \textit{show that the claim also holds for $v$}. For this, we fix $\rho_{v}$, $\zeta_{v}$ that satisfy \eqref{cndv} and \eqref{mq}. In order to prove the claim for $v$, it suffices to show that 
	\begin{equation}\label{step_i}
		\begin{aligned}
			\bigcup_{m =\lceil n/2 \rceil}^{n} \{ \Pi_{V}(m:\tau)<2\rho_{v}\} \cap \{\Pi_{V^c}(m:\tau)>\zeta_{v} \}^c \subseteq \bigcup_{j \notin V}\left\{ \Pi_{V\cup \{j\} }(\lceil n/2 \rceil : \tau)  < \rho_{v+1} \right\},
		\end{aligned}
	\end{equation}
	as this further implies,  by Boole's inequality, that the second term in \eqref{up_boun} is bounded by 
	\begin{equation}\label{bol}
		\sum_{j \notin V} \Pro_{A}\left( \Pi_{V\cup \{j\} }(\lceil n/2 \rceil : \tau)  < \rho_{v+1} \, \Big{|} \, \cF_{\tau}\right).
	\end{equation}
	Since $V$ satisfies \eqref{cond_vz} and $V \subseteq V \cup \{j\}$, then $V \cup \{j\}$ satisfies \eqref{cond_vz}, and we also have  $|V \cup \{j\}|=v+1$ for every $j \notin V$. A direct implication of the induction hypothesis is that for any $|V|=v+1$ that satisfies \eqref{cond_vz},
	\begin{equation*}
		\Pro_{A}\left( \Pi_{V}(n:\tau)<\rho_{v+1} \, \Big{|} \, \cF_{\tau} \right)
	\end{equation*}
	is uniformly $\mathfrak{p}/2$-polynomially decaying, and as a result, each term in \eqref{bol} is uniformly $\mathfrak{p}/2$-polynomially decaying. Hence, we have concluded the step of the induction.
	
	It remains to show that \eqref{step_i} holds for any $\rho_{v}$, $\zeta_{v}$ that satisfy \eqref{mq}. Suppose that there exist  $m \in [n/2,n]$ and $j \notin V$ such that
	\begin{equation}
		\Pi_{V}(m: \tau)<2\rho_{v} \quad \mbox{and} \quad \pi_{j}(m: \tau) \leq \zeta_{v}.
	\end{equation}
	Then, in of view of \eqref{mq} we have
	\begin{equation}
		\begin{aligned}
			\pi_{j}(\lceil n/2 \rceil : \tau)\lceil n/2 \rceil &=\sum_{u=1+\tau}^{\lceil n/2 \rceil + \tau} R_{j}(u) \leq \sum_{u=1+\tau}^{m+ \tau} R_{j}(u) = \pi_{j}(m: \tau) \, m \\
			&\leq \zeta_{v} \, n < \rho_{v+1} \lceil n/2 \rceil,
		\end{aligned}
	\end{equation}
	and
	\begin{equation}
		\begin{aligned}
			\pi_{i}(\lceil n/2 \rceil: \tau)\lceil n/2 \rceil&=\sum_{u=1+\tau}^{\lceil n/2 \rceil+ \tau} R_{i}(u) \leq \sum_{u=1+\tau}^{m+ \tau} R_{i}(u) =\pi_{i}(m: \tau) \, m \\
			&\leq 2 \rho_{v} n < \rho_{v+1} \lceil n/2 \rceil, \quad \forall\, i\, \in\, V,\\
		\end{aligned}
	\end{equation}
	which together imply \eqref{step_i}.
\end{IEEEproof}

\section{}\label{vlbg11}
In this appendix, we prove Theorem \ref{prV}. In what follows we fix a set $A \in \cP_{l,u}$, and an ordering sampling rule $R$. Theorem \ref{prV} claims that for the sources in $A \setminus \hat{G}(A)$ the expected value of the distance between any two LLRs defined in \eqref{llrA}, for the sampling rule $R^{A,m}$ defined in \eqref{repreA}, is relatively small as described in \eqref{vlth}. The rule $R^{A,m}$ samples at time $u+1$ the sources
in $A \setminus \hat{G}(A)$ with the
\begin{equation}\label{flna}
\lfloor \hat{N}(A)  \rfloor -|\hat{G}(A) |+ \mathbf{1} \{\hat{Z}_{m+u} \leq \hat{N}(A) - \lfloor \hat{N}(A)  \rfloor \}
\end{equation}
smallest LLRs at time $u$. For the purposes of the proof we note that \eqref{flna} is equivalent to sampling at time $u+1$ the sources in $A \setminus \hat{G}(A)$ with the
\begin{equation}\label{flna2}
 \lceil \hat{N}(A) \rceil  -|\hat{G}(A)| -1 + \mathbf{1} \{ \hat{Z}_{m+u} \leq  \mathfrak{q}_A\}
\end{equation}
smallest LLRs at time $u$, where
\begin{equation}\label{qaa}
\mathfrak{q}_A:=		
\begin{cases}
	\hat{N}(A) - \lfloor \hat{N}(A) \rfloor,& \text{if} \;   \hat{N}(A) > \lfloor \hat{N}(A) \rfloor,  \\
	1,& \text{if} \; \hat{N}(A) = \lfloor \hat{N}(A) \rfloor.	
\end{cases}
\end{equation}
We observe that the number of sources we sample from $A \setminus \hat{G}(A)$ at time $u+1$ remains the same. This is because for any positive real number $x$, we have $x=\lceil x \rceil = \lfloor x \rfloor$ when $x$ is an integer, and $\lceil x \rceil = \lfloor x \rfloor +1$ otherwise. We prove Theorem \ref{prV} as a special case of a more general result whose proof proceeds by induction on the maximum number of sources sampled at each step, which is denoted by $\lambda \in \{1,\ldots,\lceil \hat{N}(A) \rceil  -|\hat{G}(A)|\}$. To carry out the induction, we introduce a more general sampling rule, denoted by $\cR$, that encompasses the family $\{R^{A,m} \, :\, m\in \bN \}$. Enlarging the probability space as needed the sampling rule
\begin{align}\label{newR}
	\cR \equiv \cR(\lambda, \mathfrak{q}, \cD, W, Y, Z)
\end{align}
samples at time $n+1$,
\begin{enumerate}
	\item [(i)] the $\lambda -1$ sources in the set $\mathcal{D}$ with the smallest LLRs at time $n$,
	
	\item [(ii)] and with probability $\mathfrak{q}$, the source in the set $\cD$ with the $\lambda^{th}$ smallest LLR at time $n$.
\end{enumerate}
For each $i \in [M]$, the LLR associated with the rule $\cR$ is defined as
\begin{equation*}
	\Lambda^{\cR}_{i}(n):= W_i + \sum_{m=1}^n \log \left( \frac{f_{1i} (Y_i(m)) }{f_{0i} (Y_i(m)) } \right) \,  \mathbf{1}\left\{ i \in \cR \right\}, 
\end{equation*}	 
and
\begin{itemize}
	\item $\lambda$ is an integer in $\{ 1,\ldots, \lceil \hat{N}(A) \rceil  -|\hat{G}(A)|\}$.
	
	\item $\mathfrak{q}$ is a number in $[0,1]$.
	
	\item $\mathcal{D}$ is a random set taking values in
	\begin{equation}\label{frakD}
		\left\{ D \subseteq A \setminus \hat{G}(A) :\, (\lambda - 1) + \mathfrak{q} < \sum_{i \in D} \frac{I^*(D)}{I_i} \right\},
	\end{equation} 
	and \eqref{frakD} must be non-empty in order for $\cR$ to be well-defined.
	
	\item $W:=\left\{ W_i :\, i \in [M] \right\}$, where $W_i$ is a real-valued random variable that stands for the initial value of the LLR of source $i$, i.e., $\Lambda^{\cR}_{i}(0):= W_i$, and it is not necessarily equal to $0$.
	
	\item $Y:=\{ Y_i :\, i \in [M] \}$, where $Y_i :=\{ Y_i(n)\,:\, n \in \bN \}$ is a data sequence of iid random variables with density $f_{i1}$, from which we take measurements according to rule $\cR$. For each $i \in [M]$, $Y_i$ has the same distribution as $X_i$ in \eqref{xi}, but they are not necessarily identical. The values of  $\cD$ and $W$ are assumed to be generated before we start observing the data of $Y$, and they are independent of $Y$.
	
	\item $Z:=\{ Z_n \, : \, n \in \bN_{0} \}$ is a sequence of independent, Uniform$[0,1]$ random variables, which are used for randomization purposes.  For each $n \in \bN$, the source in $\cD$ with the $\lambda^{th}$ smallest LLR at time $n$, is sampled at time $n+1$ if and only if $Z_n\leq \mathfrak{q}$. The sequence $Z$ is independent of $Y,W,\cD$.
\end{itemize}

Furthermore, we denote by $\left\{ \cF^{\cR}_{n}\,:\, n \in \bN_{0} \right\}$ the filtration induced by $\cR$, that is 
\begin{equation*}
\cF^{\cR}_{n} := \begin{cases}
	\sigma\bigg( Z_0,\mathcal{D},\left\{ W_i \,:\, i \in \mathcal{D} \right\} \bigg), & n =0,\\
	\sigma\bigg(\cF^{\cR}_{n-1} ,Z_n,\left\{ Y_{i}(n)\,:\, i \in \cR(n) \right\} \bigg), & n \in \bN,
\end{cases}
\end{equation*}
and by $\cT^{\cR}$ the class of $\Pro_{A}$-a.s. finite stopping times with respect to $\left\{ \cF^{\cR}_{n}\,:\, n \in \bN_{0} \right\}$. For each $n \in \bN_0$, the set $\cR(n+1)$ contains the 
\begin{equation*}
\lambda -1 + \mathbf{1}\{ Z_n\leq \mathfrak{q}\}
\end{equation*}
sources in $\cD$ with the smallest LLRs, and thus the sampling rule $\cR$ can be viewed as an ordering rule, which in comparison to Definition \ref{definition_ordering}, it has
\begin{equation}\label{rnd}
	\hat{N}(\mathcal{D})=\lambda -1+ \mathfrak{q}  \qquad \mbox{and} \qquad \hat{G}(\mathcal{D})=\emptyset.
\end{equation}
where the fact that $\hat{G}(\mathcal{D})=\emptyset$ follows by \eqref{frakD}. Consequently, all the results developed for the ordering rules such as the results of Appendix \ref{intrLLR}, and Theorem \ref{gen_thmc} can be applied in the development of the results for $\cR$. In order to simplify the notation we will refer to a sampling rule only by $\cR$ without repeating its arguments as long as they remain fixed, and they will be restored when we need to distinguish between rules with different arguments. In order to simplify the notation, we suppress the dependence on $\cR$, and for example we write $\Lambda_i, \cF, \cT$ instead of $\Lambda_i^{\cR}, \cF^{\cR}, \cT^{\cR}$ and we will restore the notation only when we need to distinguish between different rules.

In the following proposition we show how we must choose the arguments of $\cR$ so that the rules $R^{A,m}$ and $\cR$ coincide.

\begin{proposition}\label{eqv}
	We fix $m \in \bN$ and the rule $R^{A,m}$. For the sampling rule
	\begin{equation*}
		\cR \equiv \cR( \lceil \hat{N}(A) \rceil  -|\hat{G}(A)|, \mathfrak{q}_A,A \setminus \hat{G}(A), \Lambda_i^R(m), X,\hat{Z}),
	\end{equation*}
	it holds
	\begin{equation}\label{rmr}
		\cR(n) = R^{A,m}(n)\bigcap (A \setminus \hat{G}(A)), \quad \forall \, n \in \bN_{0}. 
	\end{equation}
\end{proposition}

\begin{IEEEproof}
	First, we need to show that the rule $\cR$ considered in \eqref{rmr} is well-defined in the sense that \eqref{frakD} is non-empty, or equivalently that for $D=A \setminus \hat{G}(A)$ it holds 
	\begin{equation*}
	\lceil \hat{N}(A) \rceil  -|\hat{G}(A)| -1 + \mathfrak{q}_{A} < \sum_{ i \in A \setminus \hat{G}(A)} \frac{I^*(A \setminus \hat{G}(A))}{I_i}.
	\end{equation*}
	Indeed, in both cases for $\mathfrak{q}_{A}$ in \eqref{qaa} we deduce that
	\begin{equation*}
		\hat{N}(A) -|\hat{G}(A)| < \sum_{i \in A \setminus \hat{G}(A)} \frac{I^*(A \setminus \hat{G}(A))}{I_i},
	\end{equation*}
	which is the assumption \eqref{mchat} of Theorem \ref{prV}. It remains to show \eqref{rmr}. By definition the rule $R^{A,m}$ at time $u+1$ samples the sources in $A \setminus \hat{G}(A)$ with the \eqref{flna} smallest LLRs, and $\cR$ the sources in $A \setminus \hat{G}(A)$ with the \eqref{flna2} smallest LLRs. Since \eqref{flna} and \eqref{flna2} are equal we show the claim.
\end{IEEEproof}

\subsection{The more general theorem}
For the statement of the more general theorem, we consider a finite sequence of decreasing positive numbers $\{ p_i \, :\, 1 \leq i \leq \lceil \hat{N}(A) \rceil  -|\hat{G}(A)| \}$ that will be determined explicitly later, and also a slightly larger sequence of decreasing positive numbers $\{ p^{+}_i \, :\, 1 \leq i \leq \lceil \hat{N}(A) \rceil  -|\hat{G}(A)| \}$ such that
\begin{equation*}
0< \ldots < p_{i+1} < p_{i+1}^{+} < p_i < \cdots < p_1 < p_1^{+} < \mathfrak{p}-1.
\end{equation*}
We proceed to the statement of the more general theorem from which we deduce Theorem \ref{prV}.

\begin{theorem}\label{Vl_b}
	For any sampling rule $\cR \equiv \cR(\lambda, \mathfrak{q}, \cD, W, Y,Z)$ such that
	\begin{equation}\label{s_a}
		\max_{i,j \in \mathcal{D}} |W_{i}-W_{j}| \in  \cL^{p_{\lambda}^{+}},
	\end{equation}
	there exists a strictly increasing sequence of random times $\{ \sigma_l\,:\, l \in \bN_{0} \} \in \cT^{\cR}$ with $\sigma_0:=0$ such that for the sequence 
	\begin{equation}\label{de1}
		V_l:=\max\limits_{\sigma_l \leq n < \sigma_{l+1}}\left( \max\limits_{i,j \in \mathcal{D}}\big{|}\Lambda^{\cR}_{i}(n)-\Lambda^{\cR}_{j}(n)\big{|} \right), \quad \forall \, l \in \bN_{0},
	\end{equation}
	there is a constant $C > 0$ independent of $l$ and of $\{ W_{i} \,:\, i \in \cD \}$ such that
	\begin{equation}\label{vlts}
		\sup_{l \in \bN_0} \Exp_A \left[ V^{p_{\lambda}}_l \right] \leq C\, \left( 1 + \Exp_A \left[  \max_{i,j \in \cD} \left| W_{i}-W_{j} \right|^{p_{\lambda}^+} \right] \right).
	\end{equation}
\end{theorem}

\textit{In what follows, when we refer to a constant we also imply independent of $l$, $\cF^{\cR}_0$, and of any stopping time in $\cT$.} In view of Proposition \ref{eqv}, we observe that if we choose $\mathfrak{p}$ that satisfies the moment condition \eqref{pfrk} of Theorem \ref{prV}, and also for $\lambda= \lceil \hat{N}(A) \rceil  -|\hat{G}(A)|$ we choose
\begin{equation}\label{fst}
	p_{\lambda} := 3 + \theta, \qquad p^{+}_{\lambda} :=  3 +\theta^{+},
\end{equation}
then we deduce Theorem \ref{prV}. The proof of Theorem \ref{Vl_b} is based on an inductive argument. We first show the claim for $\lambda =1$, and then inductively for all $\lambda \in \{2, \ldots, \lceil \hat{N}(A) \rceil  -|\hat{G}(A)|\}$. At each step of the induction, we establish the result for a moment less than the half of that of the previous step, i.e.,
\begin{equation}\label{pind}
	p_{i+1} < p_{i}/2, \qquad i \in \{1,\ldots,\lceil \hat{N}(A) \rceil -|\hat{G}(A)|\}.
\end{equation}
In order to satisfy the condition $p_1^{+} < \mathfrak{p}-1$, the final step requirement \eqref{fst}, and the requirement \eqref{pind}, we consider a sequence of decreasing positive numbers $\{ \theta_i \,:\, 0 \leq i \leq \lceil \hat{N}(A) \rceil -|\hat{G}(A)| \}$ such that
\begin{equation*}
0 < \ldots < \theta_{i+1} < \theta_{i} < \ldots < \ldots < \theta_{0} < (\mathfrak{p}-1)/2^{ \lceil \hat{N}(A) \rceil -|\hat{G}(A)|-1} -3,
\end{equation*}
and we define
\begin{equation*}
 \begin{aligned}
  p_i &:= 2^{ \lceil \hat{N}(A) \rceil -|\hat{G}(A)|-i} (3+\theta_i), \quad i \in \{0,\ldots, \lceil \hat{N}(A) \rceil -|\hat{G}(A)|\},\\
  p^{+}_{i+1} &:= p_i /2, \qquad \qquad \qquad \qquad \quad \; i \in \{0,\ldots, \lceil \hat{N}(A) \rceil -|\hat{G}(A)|-1\}.
 \end{aligned}
\end{equation*}
We verify that $p^{+}_{1}:=p_0/2 < \mathfrak{p}-1$, and \eqref{fst} is satisfied for $\theta = \theta_{\lceil \hat{N}(A) \rceil -|\hat{G}(A)|}$ and $\theta^{+} = \theta_{\lceil \hat{N}(A) \rceil -|\hat{G}(A)|-1}$. 

Let us fix a sampling rule $\cR \equiv \cR(\lambda, \mathfrak{q}, \cD, W, Y,Z)$. For each $l \in \bN_0$, we bound $V_{l}$ by
\begin{equation*}
	V_l\leq  H(\sigma_l)+  B(\sigma_l) +U(\sigma_l, \sigma_{l+1}),
\end{equation*}
where
\begin{itemize}
	\item $H(n)$ is the maximum distance between any two LLRs in $\cD$  at time $n \in \bN_{0}$, i.e.,
	\begin{equation}\label{eta0}
		H(n):= \max_{i,j \in \cD} \big{|} \Lambda_{i}(n) - \Lambda_{j}(n) \big{|},
	\end{equation}
	
	\item $B(n)$ the maximum draw-down of the LLRs of all sources in $\cD$ starting from time $n \in \bN_{0}$, i.e.,
	\begin{equation}\label{beta01}
		B(n):=\max_{i \in \cD}\left(\Lambda_{i}(n)-\inf_{m \geq n}\Lambda_{i}(m) \right),
	\end{equation}
	
	\item $U(n,m)$ the difference between the maximum LLR at time $n \in \bN_{0}$ and the maximum LLR at time $m-1$, where $m\in \bN$ and $m>n$, i.e.,
	\begin{equation}\label{ups}
		U(n,m):=\max_{i \in \cD} \Lambda_{i}(m-1) - \max_{i \in \cD} \Lambda_{i}(n).
	\end{equation}	      	 
\end{itemize}

Therefore, during the random interval $[\sigma_l, \sigma_{l+1})$, $V_l$ is bounded by the sum of the maximum distance of the LLRs at time $\sigma_l$, the maximum draw-down of the LLRs starting from time $\sigma_l$, and the increase in the maximum LLR between the times $\sigma_l$ and $\sigma_{l+1} -1$. In order to show \eqref{vlts}, by application of Jensen's inequality it follows that it suffices to show that this is the case for the following three terms,
\begin{equation*}
\begin{aligned}
\sup_{l \in \bN_0} \Exp_A \left[ H^{p_{\lambda}}(\sigma_l) \right],\; \sup_{l \in \bN_0} \Exp_A \left[ B^{p_{\lambda}}(\sigma_l) \right],\; \sup_{l \in \bN_0} \Exp_A \left[ U^{p_{\lambda}}(\sigma_l, \sigma_{l+1}) \right]\leq C\,\left( 1 + \Exp_A \left[  \max_{i,j \in \cD} \left| W_{i}-W_{j} \right|^{p_{\lambda}^+} \right] \right).
\end{aligned}
\end{equation*}
As the proof progresses we will see that for $\lambda \geq 2$ in order to prove the claim for $\{ U(\sigma_l, \sigma_{l+1})\,:\, l\in \bN_{0} \}$ we must show a slightly stronger claim for $\{ H(\sigma_l)\,:\, l\in \bN_{0} \}$, i.e., there is a constant $C >0$ such that
\begin{equation*}
	\begin{aligned}
		\sup_{l \in \bN_0} \Exp_A \left[ H^{p^{+}_{\lambda}}(\sigma_l) \right]\leq C\, \left( 1 + \Exp_A \left[  \max_{i,j \in \cD} \left| W_{i}-W_{j} \right|^{p_{\lambda}^+} \right] \right).
	\end{aligned}
\end{equation*}
which implies the former because $\Exp_A \left[ H^{p_{\lambda}}(n) \right] \leq \Exp_A \left[ H^{p^{+}_{\lambda}}(n) \right] +1$, for all $n \in \bN_{0}$. For the uniformity of the results we show this also for the case $\lambda =1$. By Lemma \ref{dd_l}, it follows that for any choice of $\{\sigma_l\,:\, l \in \bN_{0} \} \in \cT$, the sequence $\{ B(\sigma_l)\,: \, l\in \bN_{0} \}$ is bounded in $\cL^{p_{\lambda}}$ by a constant. However, this is not the case for $\{ H(\sigma_l)\,:\, l\in \bN_{0} \}$,  $\{ U(\sigma_l, \sigma_{l+1})\,:\, l\in \bN_{0} \}$ which are bounded in $\cL^{p^+_{\lambda}}$, $\cL^{p_{\lambda}}$ by a term that depends on $\Exp_A \left[  \max_{i,j \in \cD} \left| W_{i}-W_{j} \right|^{p_{\lambda}^+} \right]$. For example, when $l=0$ by definition \eqref{eta0} we have
\begin{equation*}
 \Exp_A \left[ H^{p^{+}_{\lambda}}(0) \right] = \Exp_A \left[  \max_{i,j \in \cD} \left| W_{i}-W_{j} \right|^{p^{+}_{\lambda}} \right]. 
\end{equation*}
Therefore, we need to define the sequence $\{\sigma_l\,:\, l \in \bN_{0} \}$ in a way that enables us to show 
\begin{equation}\label{ab2}
	\begin{aligned}
		\sup_{l \in \bN_0} \Exp_A \left[ H^{p^{+}_{\lambda}}(\sigma_l) \right],\;\; \sup_{l \in \bN_0} \Exp_A \left[ U^{p_{\lambda}}(\sigma_l, \sigma_{l+1}) \right]\leq C\, \left( 1 + \Exp_A \left[  \max_{i,j \in \cD} \left| W_{i}-W_{j} \right|^{p_{\lambda}^+} \right] \right).
	\end{aligned}
\end{equation}

\subsection{The special case $\lambda=1$} 
In this subsection we present the proof of Theorem \ref{Vl_b} for the case $\lambda=1$, which is the basis of our proof by induction for the general case $\lambda >1$. Also, it is the case on which the bibliography concerning the existing ordering sampling rules was focused on, under a particular setup with known number of anomalies as described in Subsection \ref{sec:special}. Our proof differs from the existing ones, and it is based on Lorden's excess inequality \cite[Theorem 3]{lorden1970excess}.

For the case $\lambda=1$, we define the $\{\sigma_l\,:\, l \in \bN_{0}\}$ as the sequence of times where the maximum LLR in $\cD$ changes. For this, we define the operator $J: \cT \to \cT$ that matches each $\nu \in \cT$ to the first time after $\nu$ that a change in the maximum LLR occurs, i.e.,
\begin{equation}\label{ttl}
	J(\nu):= \inf\left\{n > \nu:\, \max_{i \in \cD}\Lambda_{i}(n) > \max_{i \in \cD}\Lambda_{i}(\nu)  \right\},
\end{equation}
and the sequence $\{ \sigma_l\, :\, l \in \bN_{0}\}$ is defined recursively as
\begin{equation}\label{slp1}
	\sigma_{l+1}:=J(\sigma_l), \quad l \in \bN_{0}, \quad \mbox{where}\quad \sigma_0:=0.
\end{equation}
Under this definition of $\{\sigma_l\,:\, l \in \bN_{0}\}$ the maximum LLR remains the same during $[\sigma_l,\sigma_{l+1})$, which implies that
\begin{equation}\label{u0}
	U(\sigma_{l},\sigma_{l+1})=0, \quad \forall \; l \in \bN_{0}.
\end{equation}
Therefore, in order to prove \eqref{vlts}, it suffices to show \eqref{ab2} only for $\sup_{l \in \bN_0} \Exp_A \left[ H^{p^+_{1}}(\sigma_l) \right]$.
\begin{IEEEproof}[Proof of Theorem \ref{Vl_b} for $\lambda=1$]
First, we show by induction that $\{\sigma_{l}\,:\, l\in \bN_{0}\}$ is well-defined, i.e., $\{ \sigma_{l}\,:\, l\in \bN_{0} \} \in \cT$. By definition $\{\sigma_{l}\,:\, l\in \bN_{0}\}$ is a sequence of stopping times with respect to $\{ \cF_{n} \,:\, n \in \bN \}$, and it remains to show that $\sigma_{l}$ is $\Pro_{A}$-a.s. finite, for all $l\in \bN_{0}$. Indeed,	$\sigma_{0}=0 \in \cT$ and if $\sigma_{l} \in \mathcal{T}$ for some $l \in \bN_{0}$, then by Lemma \ref{tu}(i) there is a constant $C>0$ such that
\begin{equation*}
	\Exp_{A}[\sigma_{l+1} - \sigma_{l}] \leq C,
\end{equation*}
which implies that $\sigma_{l+1}$ is $\Pro_{A}$-a.s. finite, and thus $\sigma_{l+1} \in \mathcal{T}$.
	
Second, in view of \eqref{u0}, in order to prove \eqref{ab2} it suffices to show that there is a constant $C>0$ such that
\begin{equation*}
\sup_{l \in \bN_0} \Exp_A \left[ H^{p^{+}_{1}}(\sigma_l) \right] \leq C\, \left( 1 + \Exp_A \left[ \max_{i,j \in \cD} \left| W_{i}-W_{j} \right|^{p^{+}_{1}} \right] \right).
\end{equation*}
For $l=0$, by definition \eqref{eta0} we have
\begin{equation}\label{tta}
	\Exp_A \left[ H^{p^{+}_{1}}(0) \right] = \Exp_A \left[  \max_{i,j \in \cD} \left| W_{i}-W_{j} \right|^{p^{+}_{1}} \right]. 
\end{equation}
For $l \geq 1$, since $\{ \sigma_{l}\,:\, l\in \bN_{0} \} \in \cT$ and $p^{+}_{1} < \mathfrak{p}-1$ by Lemma \ref{tu}(iv), we conclude that there is a constant $C_{0}>0$ such that
\begin{equation}\label{ttb}
	\sup_{l \in \bN}\Exp_{A}\left[H^{p^{+}_{1}}(\sigma_{l}) \, | \, \cF_{\sigma_{l-1}}\right] \leq C_0.
\end{equation}
Adding up the upper bounds in \eqref{tta}, \eqref{ttb}, we show the claim for $C:=\max\{ 1, C_0\}$.
\end{IEEEproof}

\subsection{The general case $\lambda > 1$}
In this subsection, we provide the proof of Theorem \ref{Vl_b} for the general case $\lambda >1$, and in Subsection \ref{spl}, we include the auxiliary lemmas that support the proof. First, we define the sequence $\{\sigma_{l}\,:\, l\in \bN_{0}\}$ and then we proceed to the proof of Theorem \ref{Vl_b} for the defined sequence. The definition of $\{\sigma_{l}\,:\, l\in \bN_{0}\}$ is more complicated compared to that of case $\lambda =1$, because in the general case more than one sources are sampled simultaneously. For fixed $\lambda$, the definition of $\sigma_{l+1}$, given $\sigma_{l} \in \mathcal{T}$, is the same for all $l \in \bN_0$. For instance for $\lambda =2$, let us fix $l \geq 0$ and $\sigma_{l} \equiv \nu \in \cT$. Then, $\sigma_{l+1}$ is defined as the last element of a sequence of three stopping times
\begin{equation*}
\nu < \nu_1 < \nu_2,
\end{equation*}
where $\sigma_{l} \equiv \nu$, and $\sigma_{l+1}:=\nu_2$. The intermediate time is defined as $\nu_1 := J(\nu)$, where $J$ is defined in \eqref{ttl}, and it is the first time after $\nu$ that a change in the maximum LLR occurs. The last time $\nu_2$ is the first time after $\nu_1$ that the smallest LLR exceeds one of the others. For instance, if $u$ is the identity of the source with the smallest LLR at time $\nu_1$ then $\nu_{2} := F_{1}(\nu_1)$, where
\begin{equation*}
	F_{1}(\nu_1):= \inf \left\{ n > \nu_1 :\, \Lambda_{u}(n) > \min_{i \neq u} \Lambda_{i}(n) \right\}.
\end{equation*}
\begin{center}
	\begin{figure}[h]
		\subfloat[LLRs at time $\nu$]{
			\includegraphics[width=0.3\linewidth]{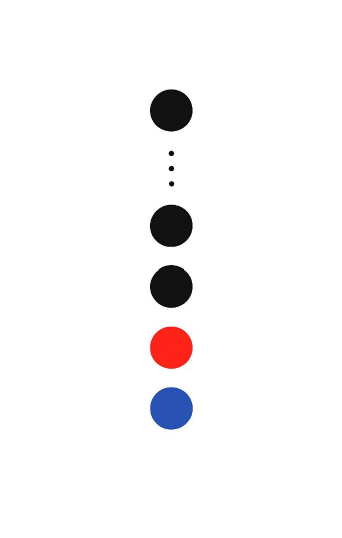}  
			\label{fig:nu}
		}
		\subfloat[LLRs at time $\nu_1$]{
			\includegraphics[width=0.3\linewidth]{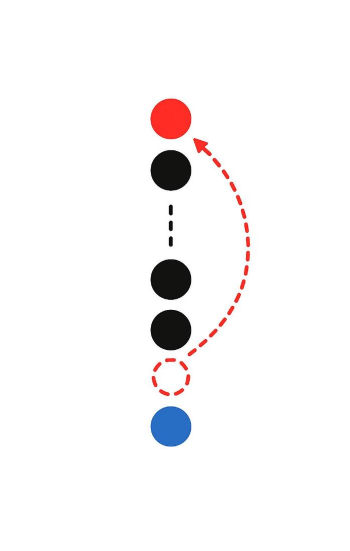}
			\label{fig:nu1} 
		} 
		\subfloat[LLRs at time $\nu_{2}$]{
			\includegraphics[width=0.32\linewidth]{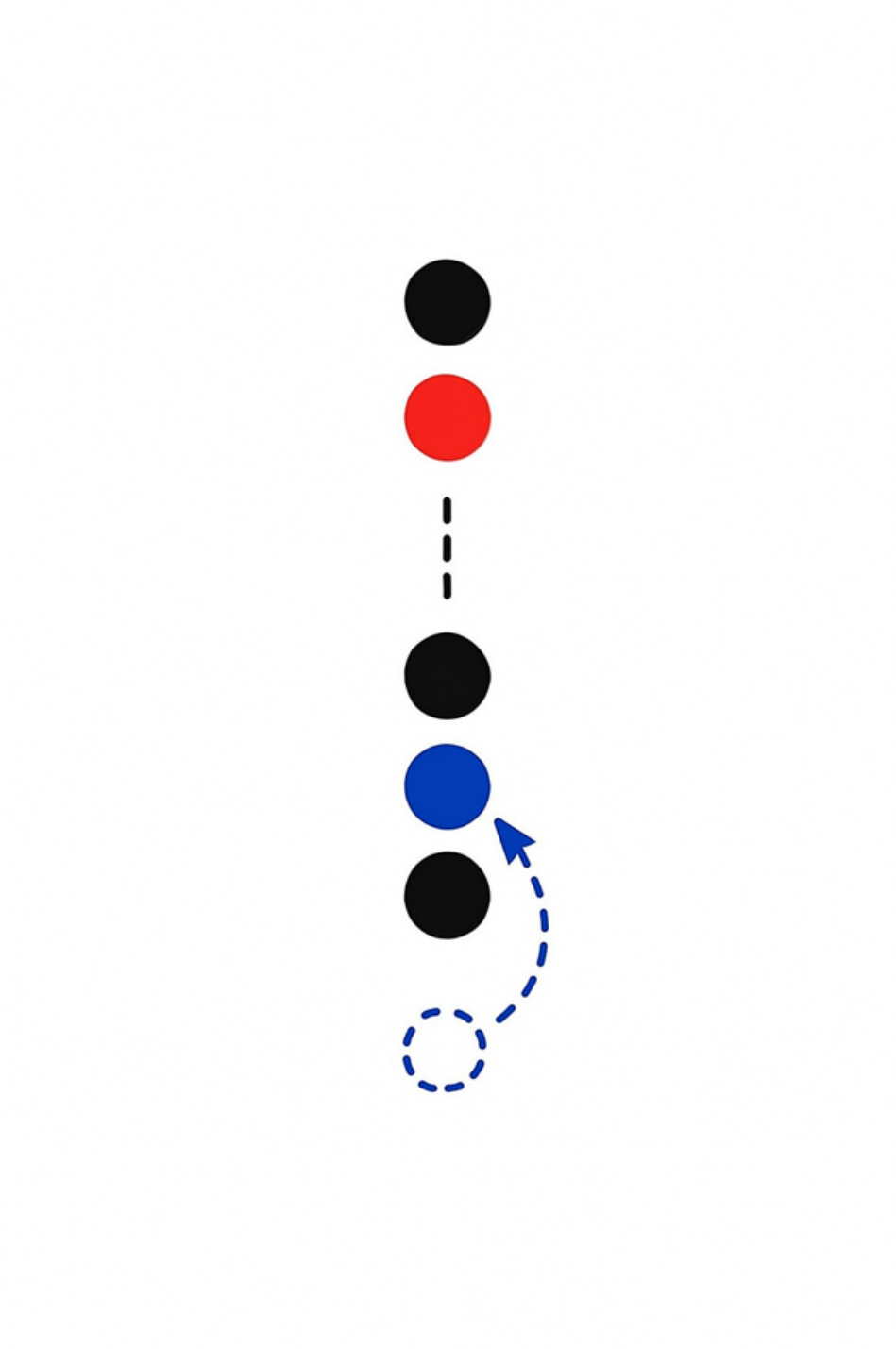}
			\label{fig:nu2} 
		} 
		\captionsetup{justification=raggedright, singlelinecheck=false}
		\caption{Indicative figure for the definition of $\{\sigma_{l}\,:\, l\in \bN_{0}\}$ for $\lambda=2$.}
	\end{figure}
\end{center}

To give some intuition behind this definition we provide an indicative figure. In Figure \ref{fig:nu}, we depict the values of the LLRs as balls, where the black balls are not sampled whereas the red and the blue are sampled because the represent the two smallest LLRs. Figure \ref{fig:nu1} is indicative of what happens at time $\nu_1$, where the red ball exceeds the current maximum. However, we point out that the identity of the red ball at time $\nu_1$ is not necessarily the same as that of the red ball at time $\nu$, because during $[\nu,\nu_1)$ the identity of red and blue balls, including the ordering within the black balls, may change, except from the identity of the black ball which corresponds to the maximum LLR. During $[\nu_1,\nu_2)$ the black balls together with the red behave as an independent group of sources that are sampled according to $\mathcal{R}$ with $\lambda=1$, whereas the blue ball is sampled at each instant $m \in  [\nu_1,\nu_2)$. In order to avoid the situation where the group of blacks with the red keep on increasing and the blue never reaches them, resulting into an increasing distance between the blue and the others, we introduce the last time $\nu_2$ which by definition requires the blue to reach the others. In Figure \ref{fig:nu2}, we show that at time $\nu_2$ the ball with smallest LLR (blue ball) reaches the others by getting between them. Also, the red ball which did the overshoot above the maximum LLR at time $\nu_1$ is no longer the one with the maximum LLR. This is because the black balls together with the red are sampled according to $\mathcal{R}$ with $\lambda=1$ and thus their ordering may change. In Subsection \ref{spl}, in Lemma \ref{tu}(i) we prove that the interval $\nu_1 -\nu$ is relatively ``short", and in Lemma \ref{vth}(iii) we prove that the interval $\nu_2 -\nu_1$ is relatively ``short" given that the initial distance of the LLRs, i.e. $H(\nu)$, is relatively ``small".

Based on this approach, we extend the definition of $\{\sigma_{l}\,:\, l\in \bN_{0}\}$ for $\lambda >2$, reassuring that no LLR is left behind. For this, we introduce the following notation. We denote by

\begin{itemize}
	\item $w_{i}(n)$ the identity of the source  in $\cD$ with the $i^{th}$ largest LLR at time $n \in \bN_{0}$, where  $i \in \{1, \ldots, |\cD|\}$, \\
	
	\item $\cD_{j}(n)$ the  subset of sources in $\cD$ with the $|\cD|-(\lambda-j)$ largest LLRs at time $n \in \bN_{0}$, i.e.,
	\begin{equation}\label{As}
		\cD_{j}(n) := \{ w_{i}(n) \,:\, i \leq |\cD|-(\lambda-j)\}, \quad \mbox{where} \quad j \in \{0,\ldots,\lambda\},
	\end{equation}
	 thus $\cD_{\lambda}(n)=\cD$, and $\cD \setminus \cD_{j}(n)$ is the subset of sources in $\cD$ with the $\lambda-j$ smallest LLRs at time $n \in \bN_{0}$, i.e.,
	\begin{equation*}
	 \cD \setminus \cD_{j}(n) = \{w_{i}(n)\, :\, i \geq |\cD|-(\lambda-j)+1 \},
	\end{equation*}
	
	\item  $H_j(n)$, for each $j \in \{0,\dots,\lambda\}$, the maximum distance of any two LLRs in $\cD_j(n)$ at time $n$, i.e.,
	\begin{equation}\label{til_ovrs}
		H_j(n):= \max_{i,k \in \cD_{j}(n)}\big{|} \Lambda_{i}(n) - \Lambda_{k}(n) \big{|}, \quad n \in \mathbb{N}_{0},
	\end{equation}	
	and thus $H_{\lambda} \equiv H$ defined in \eqref{eta0}, 
	
	\item $F_{j}: \mathcal{T} \to \mathcal{T}$, where  $j \in \{0,\ldots,\lambda-1\}$, the operator that matches each $\nu \in \cT$ to the first time after $\nu$ that at least one of the $\lambda-j$ smallest LLRs exceeds at least one of the $|D|- (\lambda-j)$ largest LLRs. Equivalently, $F_{j}(\nu)$ is the first time after $\nu$ that the LLR of at least one source from $\cD \setminus \cD_{j}(\nu)$ overshoots the LLR of at least one source from $\cD_{j}(\nu)$, i.e., 
	\begin{equation}\label{ff}
		F_{j}(\nu):= \inf \left\{ n > \nu :\, \max_{i \in \cD \setminus \cD_{j}(\nu)} \Lambda_{i}(n) > \min_{i \in \cD_j(\nu)} \Lambda_{i}(n) \right\}.
	\end{equation}
\end{itemize}
For fixed $\lambda$, the definition of $\sigma_{l+1}$, given $\sigma_{l} \in \mathcal{T}$, is the same for all $l \in \bN_0$. Let us fix  $l \geq 0$ and $\sigma_{l} \equiv \nu \in \cT$, then $\sigma_{l+1}$ is defined as the last element of an increasing sequence of $\lambda + 1$ stopping times $\nu$, $\{\nu_j \,:\, 1 \leq j \leq \lambda \}$, such that
\begin{equation}\label{sig1}
	\nu < \nu_1 < \ldots < \nu_{\lambda-1} < \nu_{\lambda},
\end{equation}
where $\sigma_{l} \equiv \nu$, and $\sigma_{l+1}:=\nu_{\lambda}$. The intermediate times are defined as follows,
\begin{equation*}
	\nu_1 := J(\nu),
\end{equation*}
where $J$ is defined in \eqref{ttl}, and for each $j \in \{1,\ldots,\lambda-1\}$,
\begin{equation*}
\nu_{j+1}:=F_{j}(\nu_j),
\end{equation*}
where $F_{j}$ is defined in \eqref{ff}. Therefore, for $\lambda \geq 1$, we define the operator
\begin{equation}\label{fl}
	\cG_{\lambda}:=
	\begin{cases}
		J,& \quad \lambda =1,\\
		F_{\lambda-1} \circ \ldots \circ F_{1} \circ J,& \quad \lambda \geq 2,
	\end{cases}
\end{equation}
and the sequence $\{\sigma_{l}\,:\, l\in \bN_{0}\}$ is defined recursively as
\begin{equation}\label{cons}
\sigma_{l+1}:= \cG_{\lambda}(\sigma_{l}), \qquad \sigma_{0}:=0.
\end{equation}
In the proof of Theorem \ref{Vl_b}, we will show that the existence claim in Theorem \ref{Vl_b} is fulfilled by $\{\sigma_{l}\,:\, l\in \bN_{0}\}$ defined according to \eqref{cons}. One of the implications of definition \eqref{cons} is that for $\lambda \geq 2$ $U(\sigma_{l},\sigma_{l+1})$ is not necessarily equal to zero. 

The following lemma is essential for the proof of Theorem \ref{Vl_b}, as it provides a condition under which $\{V_l\,:\, l \in \bN_{0} \}$ satisfies \eqref{vlts}. More precisely since $p_{\lambda} < p^{+}_{\lambda}$, and for $\lambda \geq 2$ it holds $p_{\lambda}, p^{+}_{\lambda} \in [2,\mathfrak{p}/2)$, we will show that under the aforementioned condition for any $p,\, q \in [2,\mathfrak{p}/2)$ such that $q < p$, there is a constant $C>0$ such that
\begin{equation}\label{vc1}
\sup_{l \in \bN_0} \Exp_A \left[ V^{q}_l \right] \leq C\, \left( 1 + \Exp_A \left[ H^{p}(0) \right] \right). 
\end{equation}
For this we use the supporting Lemmas \ref{bov}, \ref{vth} provided in the subsection after the proof of Theorem \ref{Vl_b}. We fix $p,\, q \in [2,\mathfrak{p}/2)$ such that $q < p$, and in view of \eqref{ab2}, in order to show \eqref{vc1}, it suffices to prove that
\begin{equation}\label{vc2}
	\begin{aligned}
		\sup_{l \in \bN_0} \Exp_A \left[ H^{p}(\sigma_l) \right],\;\; \sup_{l \in \bN_0} \Exp_A \left[ U^{q}(\sigma_l, \sigma_{l+1}) \right]\leq C\, \left( 1 + \Exp_A \left[ H^{p}(0) \right] \right).
	\end{aligned}
\end{equation}
By Lemma \ref{bov}(iii) (with $\nu \equiv \sigma_{l}$ and $w \equiv \sigma_{l+1}$), we have that there is a constant $C_1 >0$ such that
\begin{equation}\label{vc4}
	\sup_{l \in \bN_0}\Exp_{A}\left[U^{q}(\sigma_l,\sigma_{l+1})\right] \leq C_1 \, \left(1+ \sup_{l \in \bN_0}\Exp_{A}\left[(\sigma_{l+1}-\sigma_l)^{q}\right]\right),
\end{equation}
and by Lemma \ref{vth}(iv) (with $\nu \equiv\sigma_{l}$), we have that there is a constant $C_2 >0$ such that
\begin{equation}\label{vc3}
	\sup_{l \in \bN_0}\Exp_{A}\left[(\sigma_{l+1}-\sigma_l)^{q} \right] \leq C_2 \left(1 +  \sup_{l \in \bN_0}\Exp_{A}\left[H^{p}(\sigma_l)\right]\right).
\end{equation}
Therefore, in order to prove \eqref{vc1} and that $\{\sigma_{l} \,:\, l \in \bN_{0}\} \in \cT$, it suffices to show that there is a constant $C_3 >0$ such that
\begin{equation}\label{vc5}
\sup_{l \in \bN_0} \Exp_A \left[ H^{p}(\sigma_l) \right]\leq C_3\, \left( 1 + \Exp_A \left[  H^{p}(0) \right] \right).
\end{equation}

In Lemma \ref{suff}, we provide a condition under which $\{ H(\sigma_{l})\,:\, l\in \bN_{0} \}$ satisfies \eqref{vc5}. Also, we prove that if for the generic interval $[\nu,\widehat{\nu})$, where $\nu \in \cT$ and $\widehat{\nu}:=\cG_{\lambda}(\nu)$, a particular implication holds then the aforementioned condition is satisfied.
\begin{lemma}\label{suff}
	We fix $\lambda \geq 2$, $p,\, q \in [2,\mathfrak{p}/2)$ such that $q < p$, and the stopping time $\nu \in \cT$. We assume that $H(0) \in \mathcal{L}^{p}$.
	\begin{enumerate}[(i)]
		\item  If there is a constant $C>0$ such that 
		\begin{equation}\label{hl}
			\Exp_{A}\left[H^{p}(\sigma_{l+1})\right] \leq C \left( 1 + \left(\Exp_{A}\left[(\sigma_{l+1}-\sigma_{l})^{q}\right]\right)^{2/3} \right), \quad \forall\; l \in \bN_{0},
		\end{equation} 
		then $\{\sigma_{l}\,:\, l \in \bN_{0} \} \in \cT$, $\{ H_l\,:\, l \in \bN_{0} \}$ satisfies \eqref{vc5}, and as a result $\{ V_l\,:\, l \in \bN_{0} \}$ satisfies \eqref{vc1}.\\

		\item If there is a constant $C>0$ such that the following implication holds
		\begin{equation}\label{impl}
			\begin{aligned}
				\left(\widehat{\nu}-\nu \right) \in \mathcal{L}^{q} \quad \Rightarrow  \quad  \Exp_{A}\left[H^{p}(\widehat{\nu})\right] \leq C \left( 1 + \left(\Exp_{A}\left[(\widehat{\nu}-\nu)^{q}\right]\right)^{2/3} \right),
			\end{aligned}
		\end{equation} 
		then \eqref{hl} holds.
	\end{enumerate}
\end{lemma} 

\begin{IEEEproof}
	(i) As explained in \eqref{vc1}-\eqref{vc5}, in order to show that $\{\sigma_{l}\,:\, l \in \bN_{0} \} \in \cT$ and $\{  V_l\,:\, l \in \bN_{0} \}$ satisfies \eqref{vc1}, it suffices to show \eqref{vc5}. By assumption of \eqref{hl}, and Lemma \ref{vth}(iv) (with $\nu \equiv\sigma_{l}$) it follows that there is a constant $D >0$ such that $\Exp_{A}\left[(\sigma_{l+1}-\sigma_{l})^{q}\right] \leq D(1+\Exp_{A}\left[H^{p}(\sigma_{l})\right])$, and thus a constant $Q> 0$ such that
	\begin{equation}\label{ttn1}
		\begin{aligned}
			\Exp_{A}\left[H^{p}(\sigma_{l+1})\right] \leq  C + C\left( D + D \, \Exp_{A}\left[H^{p}(\sigma_{l})\right]\right)^{2/3} 
			\leq  Q + Q \left(\Exp_{A}\left[H^{p}(\sigma_{l})\right] \right)^{2/3}, \qquad \forall \, l \in \bN_0,
		\end{aligned}
	\end{equation}
	where for the second inequality we used the fact that $(x+y)^{2/3} \leq x^{2/3} + y^{2/3}$ for any $x,y \geq 0$. The sequence  
	\begin{equation*}
		a_{l+1} := Q + Q \left(a_{l}\right)^{2/3}, \quad l \in \bN_{0}, \quad  a_{0} := \Exp_{A}\left[H^{p}(0)\right],
	\end{equation*}
	converges to a limit, denoted by $L$, which is the unique root of the equation $L= Q +Q L^{2/3}$. If $\Exp_{A}\left[H^{p}(0)\right] > L$, then the sequence is decreasing and upper bounded by $\Exp_{A}\left[H^{p}(0)\right]$, otherwise it is non-decreasing and upper bounded by $L$. In both cases,
	\begin{equation*}
		\sup_{l \in \bN_0} \Exp_A \left[ H^{p}(\sigma_l) \right]\leq L + \Exp_A \left[  H^{p}(0) \right],
	\end{equation*}
	which proves \eqref{vc5}, for $C_3 := \max\{1,L\}$.\\
	
	(ii) We will show \eqref{hl} by induction on $l \in \bN_0$.
	
	\textit{Basis of induction}: For $l=0$, since $\sigma_0=0$ and $H(0) \in \mathcal{L}^{p}$, by application of Lemma \ref{vth}(iv) (with $\nu\equiv\sigma_0$) we have $\sigma_1 - \sigma_0 \in \mathcal{L}^{q}$, and by assumption of \eqref{impl} (with $\nu\equiv\sigma_0$) we obtain \eqref{hl} for $l=0$.
	
	\textit{Step of induction}: We fix $i \in \bN$. We assume that \eqref{hl} holds for each $l \leq i-1$, and we prove that it also holds for $l=i$. By assumption of \eqref{hl} for each $l \leq i-1$, it follows that \eqref{ttn1} holds for each $l \leq i-1$, and since $H(0) \in \mathcal{L}^{p}$ we deduce that 
	\begin{equation}\label{str1}
		\begin{aligned}
			H(\sigma_{l+1}) \in \mathcal{L}^{p}, \;\; \forall \; l \in \{-1,\ldots,i-1\}\quad \Leftrightarrow \quad H(\sigma_{l}) \in \mathcal{L}^{p}, \;\; \forall \; l \in \{0,\ldots,i\}.
		\end{aligned}
	\end{equation}
	Thus, by application of Lemma \ref{vth}(iv) (with $\nu\equiv\sigma_l$) for each $l \in \{0,\ldots,i\}$, it follows that
	\begin{equation}
		\{\sigma_{l+1}-\sigma_l\} \in \mathcal{L}^{q}, \quad \forall \; l \in \{0,\ldots,i\},
	\end{equation}
	which implies that $\sigma_l \in \cT$ for all $l \in \{0,\ldots,i+1\}$. Therefore, $\sigma_i \in \cT$ and $(\sigma_{i+1}-\sigma_i) \in \mathcal{L}^{q}$, which by assumption of \eqref{impl} (with $\nu\equiv\sigma_i$) implies that \eqref{hl} holds for $l=i$.\\
\end{IEEEproof}

For the proof of Theorem \ref{Vl_b} we will apply induction. The following lemma is essential for the establishment of the proof by induction of Theorem \ref{Vl_b}. We recall the definition of the sequence
\begin{equation*}
\nu < \nu_1 < \ldots < \nu_{\lambda-1} < \nu_{\lambda},
\end{equation*}
in \eqref{sig1}, and we fix $j \in \{1,\ldots,\lambda-1\}$. In Lemma \ref{cs_inq}, we provide a bound on a moment of $H_{j+1}(\nu_{j+1})$ given that the respective bound holds also for $H_{j}(\nu_{j})$. For the proof of the result we restrict the rule $\cR$ defined in \eqref{newR} on the set $\cD_{j}(\nu_{j})$ defining in this way the rule
\begin{equation}\label{crj}
	\cR_{j}:=\cR(j, \mathfrak{q}, \mathcal{D}_{j}(\nu_{j}), \widetilde{W}, \widetilde{Y}, \widetilde{Z}),
\end{equation}
where 
\begin{equation}
	\begin{aligned}
		\widetilde{W}_{i}&:=\Lambda^{\cR}_{i}(\nu_{j}), \quad &&\forall \; i \in \mathcal{D}_{j}(\nu_{j}), \\
		\widetilde{Y}_{i}(n)&:=Y_{i}(n+\nu_{j}), \quad &&\forall\; n \in \bN,  \quad \forall \; i \in \mathcal{D}_{j}(\nu_{j}),\\
		\widetilde{Z}_n&:= Z_{n+\nu_{j}}, \quad &&\forall\; n \in \bN,
	\end{aligned}
\end{equation}
and our basic assumption is that Theorem \ref{Vl_b} holds for the sources in $\mathcal{D}_{j}(\nu_{j})$ when sampled according to rule $\cR_{j}$. In particular, we assume that the sequence $\{ \sigma_{l}\, :\, l \in \bN_{0}\}$ is defined recursively, in this case by the operator $\cG_{j}$ in \eqref{fl}, with $\sigma_{0}:=0$, and that for the sequence
\begin{equation}\label{Vj}
	\left\{ V_{l}:=\max\limits_{\sigma_{l}\leq n< \sigma_{l+1}}  \max\limits_{i,z\in \cD_{j}(\nu_{j}) }\left| \Lambda^{\cR_{j}}_{i}(n)-\Lambda^{\cR_j}_{z}(n)\right| \, : \, l \in \bN_{0} \right\}
\end{equation}
there is a constant $C > 0$ independent of $\cF^{\cR}_{\nu_j}$ such that
\begin{equation*}
	\sup_{l \in \bN_0} \Exp_A \left[ V^{p_j}_l \right] \leq C \left( 1 + \Exp_A \left[H_{j}^{p^{+}_j}(\nu_j)\right] \right).
\end{equation*}
Each $V_l$ in \eqref{Vj} stands for the maximum distance of the LLRs of the sources in $\cD_{j}(\nu_{j})$ during $[\sigma_{l},\sigma_{l+1})$ when sampled according to $\cR_j$. We restore the superscript that indicates the sampling rule, in order to distinguish between $\cR$ and $\cR_j$.

\begin{lemma}\label{cs_inq}
	We fix $\lambda \geq 2$, $j \in \{1,\ldots,\lambda-1 \}$, and $\nu \in \cT$. We also assume that the sequence $\{\sigma_{l}\, :\, l \in \bN_{0}\}$ is defined recursively by the operator $\cG_{j}$ with $\sigma_{0}:=0$.
	
	\begin{enumerate}[(i)]
		\item The sampling rule $\cR_{j}:=\cR(j, \mathfrak{q}, \mathcal{D}_{j}(\nu_j), \widetilde{W}, \widetilde{Y}, \widetilde{Z})$, defined in \eqref{crj}, is well-defined.
		
		\item The restriction of $\cR$ on the set $\cD_{j}(\nu_{j})$ is equal to the sampling rule $\cR_j$ during the interval $[\nu_j,\nu_{j+1})$, i.e.,
		\begin{equation}\label{req}
			\cR_{j}(m) = \cR(m+\nu_j) \bigcap \mathcal{D}_{j}(\nu_j),\qquad \forall \, m\, \in [0, \nu_{j+1}-\nu_{j}).
		\end{equation}
		
		\item If for any $q >3$ there is a constant $C_0 >0$ that depends only on $q$ such that
		\begin{equation}\label{hbr}
			\Exp_{A} \left[H_{j}^{p^{+}_j}(\nu_{j})\right] \leq C_{0} \left( 1+ \left(\Exp_{A}\left[(\nu_{j}-\nu)^q \right]\right)^{2/3} \right),
		\end{equation}
		and for the sequence $\{ V_{l} \,:\, l \in \bN_{0} \}$ defined in \eqref{Vj}, there is a constant $C_1 >0$ such that
		\begin{equation}\label{hbrs}
			\sup_{l \in \bN_0} \Exp_A \left[ V^{p_j}_l \right] \leq C_1 \left(1 + \Exp_A \left[H_{j}^{p^{+}_j}(\nu_j)\right] \right),
		\end{equation}
		then for any $q >3$ there is a constant $C_2 >0$ that depends only on $q$ such that
		\begin{equation}\label{hbr+}
			\Exp_{A} \left[H_{j+1}^{p^{+}_{j+1}}(\nu_{j+1})\right] \leq C_{2} \left( 1+ \left(\Exp_{A}\left[(\nu_{j+1}-\nu)^q \right]\right)^{2/3} \right).
		\end{equation}
		In particular, for $j=\lambda-1$, $q=p_{\lambda}$, by \eqref{hbr+} we deduce that there is a constant $C>0$ such that
		\begin{equation*}
			\Exp_{A} \left[H^{p^{+}_{\lambda}}(\nu_{\lambda})\right] \leq C \left( 1+ \left(\Exp_{A}\left[(\nu_{\lambda}-\nu)^{p_{\lambda}} \right]\right)^{2/3} \right).
		\end{equation*}
	\end{enumerate}
\end{lemma}

\begin{IEEEproof}
	(i) Since $\nu_j \in \cT^{\cR}$, we have $\cD_{j}(\nu_{j}) \in \cF^{\cR}_{\nu_{j}}$ and $\widetilde{W}_{i} \in \cF^{\cR}_{\nu_{j}}$ for all $i \in \cD_{j}(\nu_{j})$. In accordance with the definition of a sampling rule in \eqref{newR}, in order to prove that $\cR_j$ is well-defined, we must show that $\{\widetilde{Y}_{i}(n)\, :\, n \in \bN\}$ is iid for all $i \in \mathcal{D}_{j}(\nu_j)$, $\{\widetilde{Z}_n \, : \, n \in \bN \}$ is idd, and also that
	\begin{equation}\label{inqq}
		j-1 + \mathfrak{q} < \sum_{i \in \mathcal{D}_{j}(\nu_j)} \frac{I^*(\cD_{j}(\nu_j))}{I_i}.
	\end{equation}
	Indeed, since $\nu_j \in \cT^{\cR}$, by \cite[Theorem 4.1.3]{durrett2010probability} it follows that for all $i \in \mathcal{D}_{j}(\nu_j)$ $\{\widetilde{Y}_{i}(n)\, :\, n \in \bN\}$ is iid, independent of $\cF^{\cR}_{\nu_{j}}$, and it has the same distribution as $\{Y_{i}(n)\, :\, n \in \bN\}$, and also $\{\widetilde{Z}_n \, : \, n \in \bN \}$ is idd, independent of $\cF^{\cR}_{\nu_{j}}$, and it has the same distribution as $\{Z_n \, : \, n \in \bN \}$. Since $I^*(\cD_{j}(\nu_j)) \geq I^*(\cD)$, the inequality \eqref{inqq} follows by Lemma \ref{vth}(i).\\
	
	(ii) In order to prove \eqref{req}, we recall that the random set $\cD_{j}(\nu_j)$ contains the sources with the $|\cD|-(\lambda - j)$ largest LLRs in $\cD$ at time $\nu_j$, and the random set $\cD\setminus\cD_{j}(\nu_j)$ contains the remaining $(\lambda - j)$ sources. Thus, during $[\nu_j, \nu_{j+1})$ the sampling rule $\cR$ samples the $(\lambda - j)$ sources in $\cD\setminus\cD_{j}(\nu_j)$, and the 
	\begin{equation*}
		j-1 + \mathbf{1}\{ Z_{n+\nu_j} \leq \mathfrak{q} \}
	\end{equation*}
	sources with the smallest LLRs in $\cD_{j}(\nu_j)$. On the other hand, the sampling rule $\cR_j$ samples the
	\begin{equation*}
		j-1 + \mathbf{1}\{ \widetilde{Z}_n \leq \mathfrak{q} \}
	\end{equation*}
	sources with the smallest LLRs in $\cD_{j}(\nu_j)$. Therefore, during $[\nu_j, \nu_{j+1})$ the sampling rule $\cR_j$ samples the same sources from $\cD_{j}(\nu_j)$ as the rule $\cR$, which proves \eqref{req}.\\
	
	(iii) By definition of $F_j(\nu_j)$ in \eqref{ff}, at time $\nu_{j+1}$ a LLR from $\cD \setminus \cD_{j}(\nu_j)$ overshoots one of the LLRs from $\mathcal{D}_{j}(\nu_j)$. This implies that the maximum distance of the sources in $\mathcal{D}_{j+1}(\nu_{j+1})$ at time $\nu_{j+1}$, i.e., $H_{j+1}(\nu_{j+1})$, is bounded by $V_{u^*}$ where $u^*$ is the index of the interval $[\sigma_{l},\sigma_{l+1})$ when the aforementioned overshoot occurs, i.e., $\nu_{j+1} \in [\sigma_{u^*},\sigma_{u^*+1})$, and
	\begin{equation}
		u^{*}:= \sup\{ u \in \bN_{0} \,:\, \nu_{j+1} \geq \nu_j + \sigma_{u}\}.
	\end{equation}
	In case at time $\nu_{j+1}$ there is an overshoot above the current maximum LLR, the upper bound of $H_{j+1}(\nu_{j+1})$ is also augmented by the size of this overshoot. We denote by $\eta(\nu_{j},\nu_{j+1})$ the number of times the maximum LLR changes during $[\nu_{j},\nu_{j+1}]$, and thus the aforementioned overshoot is the $\eta(\nu_{j},\nu_{j+1})^{th}$ overshoot above the maximum LLR during $[\nu_{j},\nu_{j+1}]$. Therefore, we upper bound $H^{p^{+}_{j+1}}_{j+1}(\nu_{j+1})$ by
	\begin{equation}
		H^{p^{+}_{j+1}}_{j+1}(\nu_{j+1}) \leq V^{p^{+}_{j+1}}_{u^*} + \sum_{i=1}^{\eta(\nu_{j},\nu_{j+1})} L^{p^{+}_{j+1}}(s_i),
	\end{equation}
	We fix $q >3$. By Jensen's inequality, in order to prove \eqref{hbr+}, it suffices to show that there are constants $D_1, D_2 >0$, such that
	\begin{align}
		\Exp_{A}\left[ \sum_{i=1}^{\eta(\nu_{j},\nu_{j+1})} L^{p^{+}_{j+1}}(s_i) \right] &\leq D_{1} \left( 1+ \left(\Exp_{A}\left[(\nu_{j+1}-\nu)^q \right]\right)^{2/3} \right),\label{al1} \\
		\Exp_{A}\left[V^{p^{+}_{j+1}}_{u^*}\right] &\leq D_{2} \left(\Exp_{A}\left[(\nu_{j+1}-\nu)^q \right]\right)^{2/3}, \label{al2}
	\end{align}
	For \eqref{al1}, by Lemma \ref{bov}(ii) it follows that there is a constant $D_{1} >0$ such that
	\begin{equation}
		\begin{aligned}
			\Exp_{A}\left[\sum_{i=1}^{\eta(\nu_{j},\nu_{j+1})} L^{p^{+}_{j+1}}(s_i) \right] \leq D_{1} \left( 1 + \Exp_{A}\left[\nu_{j+1}-\nu_{j}\right] \right) \leq D_{1} \left( 1 +\left(\Exp_{A}\left[(\nu_{j+1}-\nu)^q\right]\right)^{2/3} \right),
		\end{aligned}
	\end{equation}
	where the last inequality follows by Holder's inequality and the fact that $\nu_{j+1}-\nu \geq \nu_{j+1}-\nu_j \geq 1$, and $1/q \leq 2/3$. 
	
	For \eqref{al2}, and since $p^{+}_{j+1}=p_{j}/2$ we have
	\begin{equation}\label{b1}
		\begin{aligned}
			\Exp_{A}\left[V^{p^{+}_{j+1}}_{u^*}\right] \leq \Exp_{A}\left[\sum_{l=0}^{\infty} V^{p_j/2}_{l} \, \mathbf{1}\{ u^* \geq l \}\right] \leq \sum_{l=0}^{\infty} \Exp_{A}\left[V^{p_j/2}_{l} \, \mathbf{1}\{ u^* \geq l \}\right],
		\end{aligned}
	\end{equation}
	where the last inequality is deduced by the monotone convergence theorem. For each $l \geq 0$, by the Cauchy-Schwarz inequality we have
	\begin{equation}\label{b2}
		\Exp_{A}\left[V^{p_j/2}_{l} \, \mathbf{1}\{ u^* \geq l \}\right] \leq \sqrt{\Exp_{A}[V^{p_j}_{l}]}\sqrt{\Pro_{A}(u^* \geq l)} \leq \sqrt{\Exp_{A}[V^{p_j}_{l}]} \left(\Pro_{A}(u^* \geq l)\right)^{1/3},
	\end{equation}
	where the last inequality follows by the fact that $\Pro_{A}(u^* \geq l) \leq 1$, and $1/3 \leq 1/2$. By \eqref{hbr} and \eqref{hbrs}, it follows that there is a constant $D_3 > 0$ such that
	\begin{equation}\label{b3}
		\sup_{l \in \bN_{0}} \sqrt{\Exp_{A}\left[V^{p_j}_{l} \right]} \leq D_3 \left( 1+\left(\Exp_{A}\left[(\nu_{j+1}-\nu)^q\right]\right)^{1/3} \right),
	\end{equation}
	where we used the fact that $\sqrt{x+y} \leq \sqrt{x} + \sqrt{y}$ for any $x,y >0$, and that $\nu_{j+1}-\nu \geq \nu_{j}-\nu$. Therefore,
	\begin{equation}
		\Exp_{A}\left[V^{p^{+}_{j+1}}_{u^*}\right] \leq  D_3 \left( 1+\left(\Exp_{A}\left[(\nu_{j+1}-\nu)^q\right]\right)^{1/3} \right)   \sum_{l=0}^{\infty} \left(\Pro_{A}(u^* \geq l)\right)^{1/3}.
	\end{equation}
	By definition $u^* \leq \nu_{j+1} - \nu_{j} \leq \nu_{j+1} - \nu$, thus by Markov's inequality we have
	\begin{equation}
		\Pro_{A}\left( u^* \geq l\right) \leq \frac{\Exp_{A}\left[(\nu_{j+1}-\nu)^q\right]}{l^q}, \quad l \in \bN.
	\end{equation}
	Since $\nu_{j+1} - \nu \geq 1$, for $l=0$ we also have
	\begin{equation}
		\Pro_{A}( u^* \geq 0) = 1 \leq \Exp_{A}\left[(\nu_{j+1}-\nu)^q\right].
	\end{equation}
	As a result,
	\begin{equation*}
		\sum_{l=0}^{\infty} \left(\Pro_{A}(u^* \geq l)\right)^{1/3} \leq \left( 1+ \sum_{l=1}^{\infty} \frac{1}{l^{q/3}}\right) \left(\Exp_{A}\left[(\nu_{j+1}-\nu)^q\right]\right)^{1/3}.
	\end{equation*}	
	Since $q>3$,
	\begin{equation*}
		D_4 := 1 +  \sum_{l=1}^{\infty} \frac{1}{l^{q/3}} < \infty,
	\end{equation*}
	and thus
	\begin{equation*}
		\begin{aligned}
			\Exp_{A}\left[V^{p^{+}_{j+1}}_{u^*}\right] &\leq D_3 D_4 \left(\Exp_{A}\left[(\nu_{j+1}-\nu)^q\right]\right)^{2/3} + D_3 D_4 \left(\Exp_{A}\left[(\nu_{j+1}-\nu)^q\right]\right)^{1/3}\\
			&\leq 2D_3 D_4 \left(\Exp_{A}\left[(\nu_{j+1}-\nu)^q\right]\right)^{2/3},
		\end{aligned}
	\end{equation*}
	where the last inequality follows by the fact that $x^{1/3} \leq x^{2/3}$, for any $x \geq 1$, and the claim \eqref{al2} follows with $D_2:=2D_3 D_4 $.
\end{IEEEproof}

We proceed to the proof of Theorem \ref{Vl_b} for $\lambda > 1$.

\begin{IEEEproof}[Proof of Theorem \ref{Vl_b} for $\lambda > 1$]
 We fix $\lambda_0 \in \bN$, a sampling rule $\cR$ defined in \eqref{newR} with $\lambda=\lambda_0$, and we assume that \eqref{s_a} holds. According to Lemma \ref{suff}(ii) in order to prove the claim it suffices to show that for the generic interval $[\nu,\hat{\nu})$, where $\nu \in \cT$ and $\hat{\nu} := \cG_{\lambda_0}(\nu)$, the following implication holds
 \begin{equation}\label{impl1}
 	\begin{aligned}
 		\left(\widehat{\nu}-\nu \right) \in \mathcal{L}^{p_{\lambda_0}} \quad \Rightarrow  \quad  \Exp_{A}\left[H^{p^{+}_{\lambda_0}}(\widehat{\nu})\right] \leq C \left( 1 + \left(\Exp_{A}\left[(\widehat{\nu}-\nu)^{p_{\lambda_0}}\right]\right)^{2/3} \right).
 	\end{aligned}
 \end{equation}
Recalling the definition \eqref{sig1} of the sequence
\begin{equation*}
	\nu < \nu_1 < \ldots < \nu_{\lambda-1} < \hat{\nu} := \nu_{\lambda_0},
\end{equation*}
in order to prove \eqref{impl1} it suffices to show that for each $j \in \{1,\ldots,\lambda_0\}$ there is a constant $C_j$ such that
\begin{equation}\label{hjp}
	\Exp_{A} \left[H_{j}^{p^{+}_j}(\nu_{j})\right] \leq C_{j} \left( 1+ \left(\Exp_{A}\left[(\nu_{j}-\nu)^{p_j} \right]\right)^{2/3} \right),
\end{equation}
and for $j=\lambda_0$ we prove \eqref{impl1}. In particular, we will prove the stronger claim that for each $j \in \{1,\ldots,\lambda_0\}$ and any $q>3$ there is a constant $C_j >0$ that depends only on $q$ such that
\begin{equation*}
	\Exp_{A} \left[H_{j}^{p^{+}_j}(\nu_{j})\right] \leq C_{j} \left( 1+ \left(\Exp_{A}\left[(\nu_{j}-\nu)^{q} \right]\right)^{2/3} \right).
\end{equation*}
For this, we proceed by induction on $j$.

\textit{Basis of the induction}: For $j=1$, since $\nu_{1}:=J(\nu)$ by Lemma \ref{tu}(iv) with $p = p^{+}_{1}$ it follows that there is a constant $C_1>0$ such that
\begin{equation}\label{ast1}
\Exp_A \left[ H^{p^{+}_1}_1(\nu_{1}) \right] \leq C_1 \leq C_1 \left( 1+ \left(\Exp_{A}\left[(\nu_{1}-\nu)^{q} \right]\right)^{2/3} \right),
\end{equation}
where the second inequality holds for any $q>3$. We point out that $C_1$ is independent of $\cF_{\nu}$, meaning that for any interval $[\sigma_{l}, \sigma_{l+1})$ we will get the same constants $\{ C_j \, :\, 1 \leq j \leq \lambda_0\}$ independently of $l$. Also, for $\lambda =1$ we have already proven Theorem \ref{Vl_b} in the ``Proof of Theorem \ref{Vl_b} for $\lambda=1$". This implies that for the sequence $\{ V_{l} \,:\, l \in \bN_{0} \}$ defined in \eqref{Vj} for $j=1$, there is a constant $D_1 >0$ such that
\begin{equation}\label{ast2}
	\sup_{l \in \bN_0} \Exp_A \left[ V^{p_1}_l \right] \leq D_1 \left(1 + \Exp_A \left[H_{1}^{p^{+}_1}(\nu_1)\right] \right).
\end{equation}
The basis of the induction is fulfilled by \eqref{ast1} and \eqref{ast2}. Since the step of the induction is complicated we first show the case $j=2$ (the first step), and then an arbitrary step of the induction.

\textit{The case $j=2$ (the first step)}: In view of \eqref{ast1} and \eqref{ast2}, by Lemma \ref{cs_inq}(iii) it follows that for any $q>3$ there is a constant $C_2 > 0$ such that
\begin{equation*}
	\Exp_A \left[ H^{p^{+}_2}_2(\nu_{2}) \right] \leq C_2 \left( 1+ \left(\Exp_{A}\left[(\nu_{2}-\nu)^{q} \right]\right)^{2/3} \right).
\end{equation*}
For $q=p_2$, we have \eqref{hjp}, and thus for the case $\lambda=2$, we proved the implication \eqref{impl1}. Hence, by Lemma \ref{suff} it follows that Theorem \ref{Vl_b} holds for $\lambda=2$, which implies that for the sequence  $\{ V_{l} \,:\, l \in \bN_{0} \}$ defined in \eqref{Vj} for $j=2$, there is a constant $D_2 >0$ such that
\begin{equation*}
	\sup_{l \in \bN_0} \Exp_A \left[ V^{p_2}_l \right] \leq D_2 \left(1 + \Exp_A \left[H_{2}^{p^{+}_2}(\nu_2)\right] \right).
\end{equation*}

\textit{Step of the induction}: We fix $i \in \{2,\ldots,\lambda_0 -1\}$, and we assume that for any $q >3$ there is a constant $C_i > 0$ such that
\begin{equation*}
\Exp_{A} \left[H_{i}^{p^{+}_i}(\nu_{i})\right] \leq C_{i} \left( 1+ \left(\Exp_{A}\left[(\nu_{i}-\nu)^{q} \right]\right)^{2/3} \right),
\end{equation*}
and for the sequence  $\{ V_{l} \,:\, l \in \bN_{0} \}$ defined in \eqref{Vj} for $j=i$, there is a constant $D_i >0$ such that
\begin{equation*}
	\sup_{l \in \bN_0} \Exp_A \left[ V^{p_i}_l \right] \leq D_i \left(1 + \Exp_A \left[H_{i}^{p^{+}_i}(\nu_i)\right] \right).
\end{equation*}
Then, by Lemma \ref{cs_inq}(iii) it follows that for any $q>3$ there is a constant $C_{i+1} > 0$ such that
\begin{equation*}
\Exp_{A} \left[H_{i+1}^{p^{+}_{i+1}}(\nu_{i+1})\right] \leq C_{i+1} \left( 1+ \left(\Exp_{A}\left[(\nu_{i+1}-\nu)^{q} \right]\right)^{2/3} \right).
\end{equation*}
For $q=p_{i+1}$, we have \eqref{hjp}, and thus for the case $\lambda=i+1$, we proved the implication \eqref{impl1}. Hence, by Lemma \ref{suff} it follows that Theorem \ref{Vl_b} holds for $\lambda=i+1$, which implies that for the sequence  $\{ V_{l} \,:\, l \in \bN_{0} \}$ defined in \eqref{Vj} for $j=i+1$, there is a constant $D_{i+1} >0$ such that
\begin{equation*}
\sup_{l \in \bN_0} \Exp_A \left[ V^{p_{i+1}}_l \right] \leq D_{i+1} \left(1 + \Exp_A \left[H_{i+1}^{p^{+}_{i+1}}(\nu_{i+1})\right] \right).
\end{equation*}

For $i=\lambda_0 -1$, we do the last step of the induction and we complete the proof of Theorem \ref{Vl_b} for rule $\cR$ with $\lambda=\lambda_0$.
\end{IEEEproof}

\subsection{Supporting Lemmas}\label{spl}
In this subsection, we state and prove lemmas that are used in the proof of Theorem \ref{Vl_b}. We recall that $\cR$ samples at most $\lambda$ sources that correspond to the $\lambda$ smallest LLRs, and all the remaining $|\cD|-\lambda$ with higher LLRs remain constant until a change occurs when one of the $\lambda$ smallest LLRs exceeds one of the $|\cD|-\lambda$ largest LLRs, and sometimes the overshoot exceeds even the current maximum. In Lemma \ref{tu}, we obtain upper bounds for moments of the size of intervals defined by the times when such changes occur, and of the size of the respective overshoots. For this, we need to introduce the following notation, where $\nu$ is a stopping time in $\cT$. \textit{In all supporting lemmas, when we refer to a constant we also imply independent of $l$, $\cF^{\cR}_0$, and of any stopping time in $\cT$.}

We recall the definition of the operator $J$ defined in \eqref{ttl}, and we denote by $\widetilde{\nu}$ the first time after $\nu$ that a change in the maximum LLR occurs, i.e., 
	\begin{equation}\label{wnu}
		\widetilde{\nu}:=J(\nu).
	\end{equation}
We denote by $L(\widetilde{\nu})$ the size of the overshoot at time $\widetilde{\nu}$, above the current maximum LLR we have since time $\nu$, i.e.,
	\begin{equation}\label{llw}
		L(\widetilde{\nu}):= \max_{i \in \cD} \Lambda_{i}(\widetilde{\nu}) - \max_{i \in \cD} \Lambda_{i}(\nu).
	\end{equation}

We denote by $\nu_0$ is the first time after $\nu$ when the current $(|\mathcal{D}|-\lambda)^{th}$ largest LLR changes, or equivalently the LLR of a source in $\cD \setminus \cD_0(\nu)$ exceeds the LLR of at least one source in $\cD_0(\nu)$, where $\cD_0(\nu)$ is the set that contains the sources with the $(|\mathcal{D}|-\lambda)$ largest LLRs at time $\nu$, i.e.,
\begin{equation*}
\nu_0 := F_{0}(\nu),
\end{equation*}
where
\begin{equation*}
F_{0}(\nu):= \inf \left\{ n > \nu :\, \max_{i \in \cD \setminus \cD_{0}(\nu)} \Lambda_{i}(n) > \min_{i \in \cD_0(\nu)} \Lambda_{i}(n) \right\}.
\end{equation*}

At time $\widetilde{\nu}$  a change in both the current $(|\mathcal{D}|-\lambda)^{th}$ largest LLR and the current maximum LLR occurs, but before that some changes in the current $(|\mathcal{D}|-\lambda)^{th}$ largest LLR might have already occured, and as a result
\begin{equation*}
\nu_0 \leq \widetilde{\nu}.
\end{equation*}
Also, we denote by
	\begin{equation} \label{def_S}
		S(\nu_0) := \max_{j \in \cD \setminus \cD_{0}(\nu)} \Lambda_{j}(\nu_0) - \min_{j \in \cD_0(\nu)} \Lambda_{j}(\nu)
	\end{equation}	
	the size of the overshoot at time $\nu_0$, above the $(|\mathcal{D}|-\lambda)^{th}$ largest LLR we have since time $\nu$. 
	
Starting from $\nu$, we denote by $\{\tau_i \, :\, i \in \bN_0\}$ the sequence of stopping times, when the current $(|\mathcal{D}|-\lambda)^{th}$ largest LLR changes, i.e.,
	 \begin{equation}\label{sf0}
	 	\tau_{i+1}:=F_{0}(\tau_{i}), \quad i \in \bN_{0}, \quad \mbox{where }\; \tau_0:=\nu \in \cT,
	 \end{equation}
and by $\{S(\tau_{i}) \, :\, i \in \bN_{0} \}$ the sequence of sizes of the respective overshoots defined according to \eqref{def_S}.
	  
By the definition of $\cR$, all sources in $\cD \setminus \cD_{0}(\nu)$ are sampled at every time $m \in [\nu,\nu_0]$ either with probability $1$ or with probability $\mathfrak{q}$. In order to point out the identity of the source whose LLR exceeds the boundary $\min_{j \in \cD_0(\nu)} \Lambda_{j}(\nu)$, for each $i \in \cD \setminus \cD_{0}(\nu)$ we denote by $\nu_{0,i}$ the first time after $\nu$ that the LLR of the source $i$ exceeds $\min_{j \in \cD_0(\nu)} \Lambda_{j}(\nu)$, i.e.,
\begin{align*}
	\nu_{0,i}:= \inf\left\{ n> \nu :\, \Lambda_{i}(\nu)+ \sum_{m=\nu+1}^n \log  \left( \frac{f_{1i}(Y_{i}(m))}
	{f_{0i}(Y_{i}(m)) } \right)\cR_i(m)  > \min_{j \in \cD_0(\nu)} \Lambda_{j}(\nu) \right\},
\end{align*}
where $\cR_i(m)$ is the indicator of whether source $i$ is sampled at time $m$, and by $S_{i}(\nu_{0,i})$  the size of the overshoot above $\min_{j \in \cD_0(\nu)} \Lambda_{j}(\nu)$ at time $\nu_{0,i}$ by the LLR of source $i$, i.e., 
\begin{equation*}
	S_{i}(\nu_{0,i}):=  \Lambda_{i}(\nu)+ \sum_{m=\nu+1}^{\nu_{0,i}} \log  \left( \frac{f_{1i}(Y_{i}(m)) }
	{f_{0i}(Y_{i}(m)) } \right) \cR_i(m) - \min_{j \in \cD_0(\nu)} \Lambda_{j}(\nu).
\end{equation*}
Clearly, 
\begin{equation}\label{y_max}
	S(\nu_0) \leq \max_{i \in \cD \setminus \cD_{0}(\nu)} S_{i}(\nu_{0,i}).
\end{equation}

We proceed to the statement of Lemma \ref{tu}. The proof of Theorem \ref{Vl_b} for $\lambda =1$ is based entirely on Lemma \ref{tu}.\\

\begin{lemma}\label{tu}	
    We fix $\nu \in \cT$.	
	\begin{enumerate}[(i)]
		\item For any $p \in [1, \mathfrak{p}/2)$, there is a constant $C>0$ such that
		\begin{equation}\label{n0n} 
			\Exp_{A}\left[ (\widetilde{\nu}-\nu)^{p} \, \big{|} \, \cF_{\nu} \right] \leq  C.
		\end{equation}
		Also, $\nu_0, \widetilde{\nu} \in \cT$, and for $\tau_0 := \nu$ the sequence $\{\tau_i \, :\, i \in \bN_0\} \in \cT$.
		
		\item The number of elements of the sequence $\{\tau_i \, :\, i \in \bN_0\}$, with $\tau_0 := \nu$, that lie within $[\nu, \widetilde{\nu}]$, i.e.,
		\begin{equation}\label{zz}
			z := \max\{ i \in \bN_0 \, :\, \tau_{i} \leq \widetilde{\nu}\}
		\end{equation}
		is a $\Pro_{A}$-a.s. finite stopping time with respect to filtration $\{\cF_{\tau_i} \, :\, i \in \bN_0\}$.
		
		\item For any $p \in [1, \mathfrak{p}-1 ]$, there is a constant $C>0$ such that
		\begin{equation*}
			\Exp_{A} \left[S^{p}(\nu_0) \, \big{|} \, \cF_{\nu} \right] \leq C.
		\end{equation*}
		
		\item For any $p \in [1, \mathfrak{p}-1 ]$, there is a constant $C>0$ such that
		 \begin{equation}\label{lhy}
		 	\Exp_{A}\left[ L^p(\widetilde{\nu})  \, \big{|} \, \cF_{\nu} \right] \, \leq \, \Exp_{A}\left[ H_{1}^p(\widetilde{\nu})  \, \big{|} \, \cF_{\nu} \right] \leq C,
		 \end{equation}    
		where $H_1$ is defined in \eqref{til_ovrs}, and for the case $\lambda =1$ we have $H_{1} \equiv H$.
	\end{enumerate}	
\end{lemma}

\begin{IEEEproof}
	(i) We fix $p \in [1, \mathfrak{p}/2)$. In order to prove \eqref{n0n}, it suffices to show that 
	\begin{equation}\label{ndx}
		\sum_{m=1}^{\infty}m^{p-1} \Pro_{A}\left(\widetilde{\nu} - \nu > m \, \big{|} \, \cF_{\nu}\right) \leq C,
	\end{equation}
	where $C>0$ is a constant. For each $m \in \bN_{0}$, the event $\{ \widetilde{\nu}-\nu > m  \}$ implies that during $[\nu,\, \nu + m]$ there has not occurred a change in the maximum LLR. As a result, there exists an $i \in \mathcal{D}$ such that $\Lambda_{i}(m:\nu) \leq 0$ (for example, the one which corresponds to the maximum LLR at time $\nu$). Hence, by the union bound we have 
	\begin{equation}\label{tnn}
		\Pro_{A}\left(\widetilde{\nu}-\nu >m \, \big{|} \, \cF_{\nu} \right) \leq \sum_{i \in \cD}  \Pro_{A}\left( \Lambda_{i}(m:\nu) \leq 0 \,  \big{|} \, \cF_{\nu} \right).
	\end{equation}	
	For each $i \in \cD$, $n \in \bN$, and $\rho > 0$, we further have 
	\begin{equation}\label{ttz}
		\begin{aligned}
			\Pro_{A}\left( \Lambda_{i}(m:\nu) \leq 0  \, \big{|} \, \cF_{\nu} \right) &\leq \,  \Pro_{A}\left( \Lambda_{i}(m:\nu) \leq 0,\, \pi_{i}(m:\nu)\geq \rho \,  \big{|} \, \cF_{\nu} \right)\\
			&+\Pro_{A}\left(\pi_{i}(m:\nu)<\rho \, \big{|}  \, \cF_{\nu} \right).
		\end{aligned}
	\end{equation}
	Since $\nu \in \mathcal{T}$, the first term on the right hand side of \eqref{ttz} is uniformly exponentially decaying for any $\rho \in (0,1]$, by Lemma \ref{lambda_tau}(i). It remains to show that there is a $\rho \in (0,1)$, sufficiently small, such that the second term on the right hand side of \eqref{ttz} is uniformly $\mathfrak{p}/2$- polynomially decaying with respect to $m$, and since $(p-1) - \mathfrak{p}/2 < -1$ we will show \eqref{ndx}.
	
   As explained in \eqref{rnd}, the rule $\cR$ can be viewed an ordering rule with $\hat{N}(\mathcal{D})=\lambda -1+ \mathfrak{q}$, $\hat{G}(\mathcal{D})=\emptyset$, which implies $\hat{N}(\mathcal{D}) > |\hat{G}(\mathcal{D})|$. Since for $\cR$ the set of anomalous sources is fixed to $\cD \subseteq A\setminus \hat{G}(A)$, the condition \eqref{NGh_cons1} of Theorem \ref{gen_thmc} is satisfied, no matter whether $x(\cD)$ is positive or zero. Therefore, by Theorem \ref{gen_thmc} there exists a $\rho \in (0,1)$ sufficiently small such that for each $i \in \mathcal{D}$, the second term on the right hand side of \eqref{ttz} is  uniformly $\mathfrak{p}/2$- polynomially decaying.
   
   Since $\nu \in \cT$, by definition $\widetilde{\nu}$, $\nu_0$, and  $\{\tau_i \, :\, i \in \bN_0\}$ are stopping times with respect to $\{ \cF_{n} \, :\, n \in \bN \}$, it remains to show that they are also $\Pro_{A}$-a.s. finite. Since $\nu \in \cT$ by \eqref{n0n} we deduce that $\widetilde{\nu} \in \cT$, and by $\nu_0 \leq \widetilde{\nu}$ we have $\nu_0 \in \cT$. Since $\{\tau_i \, :\, i \in \bN_0\}$ is defined by recursive application of $F_{0}$ (same as for $\nu_0$), then for $\tau_0:=\nu \in \cT$ we prove by induction that $\{\tau_i \, :\, i \in \bN_0\} \in \cT$.\\
	
   (ii) Since $z \leq \widetilde{\nu} - \nu$, and $\widetilde{\nu} \in \cT$ by part (i), then $z$ is $\Pro_{A}$-a.s. finite. At time $\widetilde{\nu}$, the LLR that exceeds the current maximum LLR, it also exceeds the current $(|\mathcal{D}|-\lambda)^{th}$ largest LLR, and all others in between. Therefore, at time $\widetilde{\nu}$ a change in both the current $(|\mathcal{D}|-\lambda)^{th}$ largest LLR and the current maximum LLR occurs, and by definition of $z$ we have
   \begin{equation*}
   	\tau_{z} = \widetilde{\nu}.
   \end{equation*}	
   Since, $\widetilde{\nu} \in \cT$, and $\{\tau_i \, :\, i \in \bN_0\} \in \cT$, we have 
    \begin{equation}
    	\{ z = k \} =\{ \tau_{k} = \widetilde{\nu} \} \in \cF_{\tau_{k}}, \quad \forall \; k \in \bN,
    \end{equation}	
	which shows the claim.\\
	
	(iii) We fix $p \in [1, \mathfrak{p}-1]$. In view of inequality
	      \begin{equation*}
	      	S(\nu_0) \leq \max_{i \in \cD \setminus \cD_{0}(\nu)} S_{i}(\nu_{0,i}),
	      \end{equation*}
	     introduced in \eqref{y_max}, it suffices to show that there is a constant $C >0$ such that for all $i \in \cD \setminus \cD_{0}(\nu)$, we have
		\begin{equation}\label{oef}
		\Exp_{A}\left[ S^{p}_{i}(\nu_{0,i}) \, \big{|} \, \cF_{\nu} \right] \leq C.
	    \end{equation}
	    We fix $i \in \cD \setminus \cD_{0}(\nu)$. Since $\nu \in \cT$, the quantities $\Lambda_{i}(\nu)$, $\min_{j \in \cD_0(\nu)} \Lambda_{j}(\nu)$ and the set $\cD \setminus \cD_{0}(\nu)$ are $\cF_{\nu}$-measurable. Also, by \cite[Theorem 4.1.3]{durrett2010probability} the sequence
	    \begin{equation*}
	    	\left\{  \log  \left( \frac{f_{1i}(Y_{i}(m+\nu))}{f_{0i}(Y_{i}(m+\nu))} \right) \, :\,  m \in \bN \right\}
	    \end{equation*}
	    is iid, independent of $\cF_{\nu}$, and it has the same distribution as
	     \begin{equation*}
	    	\left\{ \log \left( \frac{f_{1i}(Y_{i}(m))}{f_{0i}(Y_{i}(m))} \right) \, :\,  m \in \bN \right\}.
	    \end{equation*}
	    We set
	    \begin{equation*}
	    g(m) := \log \left( \frac{f_{1i}(Y_{i}(m+\nu))}{f_{0i}(Y_{i}(m+\nu))}\right), \quad m \in \bN,
	    \end{equation*}
	    and we consider the sequence of stopping times $\{t_{k} \, :\, k \in \bN \}$, where
	    \begin{equation}\label{tkd}
	     t_{k} := \inf\left\{ n \geq \nu : \sum_{m=\nu}^{n} \cR_{i}(m+1) = k\right\}, \quad k \in \bN,
	    \end{equation}
	    and $t_{k} +1$ is the first time we have collected $k$ samples from source $i$, after time $\nu$. Also, for each $k \in \bN$ we consider the sum
	    \begin{equation}\label{sg}
	    \sum_{m=1}^{k} g(t_m +1), \qquad k \in \bN.
	    \end{equation}
	    Since $\{\cR_{i}(n) \, :\, n \in \bN\}$ is a predictable sequence, i.e., $\cR_{i}(n+1) \in \cF_{n}$ for all $n \in \bN_{0}$, it follows that for each $k \in \bN$, $t_k$ is a stopping time with respect to filtration $\{ \cF_{n} \, :\, n \in \bN\}$, i.e., $\{t_k =n\} \in \cF_{n}$. Also, on the event that source $i$ is sampled at each instant after $\nu$ at least with probability $\mathfrak{q}$, it follows that  for each $k \in \bN$, $t_k$ is also $\Pro_{A}$-a.s. finite, i.e., $t_k \in \cT$. By definition of $t_{k}$, at time $t_{k} +1$ we collect a sample from source $i$, and $g(t_k +1)$ is included in the sum \eqref{sg}. 
	    
	    We denote by $\kappa$ the first index that the sum \eqref{sg} exceeds the boundary $\min_{j \in \cD_0(\nu)} \Lambda_{j}(\nu) - \Lambda_{i}(\nu)$, i.e.,
	    \begin{equation*}
	     \kappa := \inf\left\{ k \geq 1 \,:\, \sum_{m=1}^{k} g(t_m +1) > \min_{j \in \cD_0(\nu)} \Lambda_{j}(\nu) - \Lambda_{i}(\nu) \right\},
	    \end{equation*}
	    and by definiton \eqref{tkd} of the sequence $\{t_{k} \, :\, k \in \bN \}$, it follows that the excess of the sum \eqref{sg} above the boundary $\min_{j \in \cD_0(\nu)} \Lambda_{j}(\nu) - \Lambda_{i}(\nu)$ is equal to $S_{i}(\nu_{0,i})$, i.e.,
	    \begin{equation*}
	    S_{i}(\nu_{0,i}) = \sum_{m=1}^{\kappa} g(t_m +1) - \left(\min_{j \in \cD_0(\nu)} \Lambda_{j}(\nu) - \Lambda_{i}(\nu)\right).
	    \end{equation*}
	    By \cite[Theorem 4.1.3]{durrett2010probability}, for each $k \in \bN$, $g(t_k +1)$ is independent of $\cF_{t_k}$ and it has the same distribution as $\log\left(f_{1i}(Y_{i}(1))/f_{0i}(Y_{i}(1))\right)$. Therefore, the sequence $\{g(t_k +1) \, :\, k \in \bN\}$ is iid, and the inequality \eqref{oef} follows by Lorden's excess inequality \cite[Theorem 3]{lorden1970excess}.\\
	    
	 (iv) We fix $p \in [1, \mathfrak{p}-1]$. Since at time $\widetilde{\nu}$ a change in both the current $(|\mathcal{D}|-\lambda)^{th}$ largest LLR and the current maximum LLR occurs, we have $\tau_{z} = \widetilde{\nu}$, and
	 \begin{equation}
	 	L(\widetilde{\nu}) \leq H_1(\widetilde{\nu}) \leq S(\tau_{z}),
	 \end{equation}   
	where $z$ is defined in \eqref{zz}, $L$ in \eqref{llw}, $H_1$ in \eqref{til_ovrs}, and $S$ in \eqref{def_S}. This means that at time $\widetilde{\nu}$ the overshoot above the current $(|\mathcal{D}|-\lambda)^{th}$ largest LLR (or equivalently the minimum LLR of the sources in $\cD_0(\tau_{z-1})$), i.e., $S(\tau_{z})$, exceeds the relative distance of all LLRs greater than or equal to the current $(|\mathcal{D}|-\lambda)^{th}$ largest LLR (or equivalently the LLRs from $\cD_1(\widetilde{\nu})$), i.e., $H_1(\widetilde{\nu})$, which by definition is larger than the size of the overshoot measured above the maximum LLR, i.e., $L(\widetilde{\nu})$. Consequently, 
	\begin{equation}
		L^{p}(\widetilde{\nu}) \leq H^{p}_1(\widetilde{\nu}) \leq S^{p}(\tau_{z}) \leq \sum_{k=1}^{z} S^{p}(\tau_{k}).
	\end{equation}
	In order to prove \eqref{lhy}, it suffices to show that there is a constant $C>0$ such that
	\begin{equation}\label{cll1}
		\Exp_{A} \left[ \sum_{k=1}^{z} S^{p}(\tau_{k}) \, \big{|}  \, \cF_{\nu} \right] \leq C.
	\end{equation}
	We have 
	\begin{equation}\label{ist}
		\begin{aligned}
			\Exp_{A} \left[ \sum_{k=1}^{z} S^{p}(\tau_{k}) \, \big{|} \,\cF_{\nu} \right] &= \Exp_{A} \left[\sum_{k=1}^{\infty} S^{p}(\tau_{k}) \, \mathbf{1}\{ z \geq k \} \, \big{|} \, \cF_{\nu} \right]\\
			&=\sum_{k=1}^{\infty} \Exp_{A} \left[S^{p}(\tau_{k}) \,  \mathbf{1}\{ z \geq k \} \, \big{|} \, \cF_{\nu} \right],
		\end{aligned}
	\end{equation}
	where the last inequality is deduced by the monotone convergence theorem. In part (ii), we proved that $z$ is a $\Pro_{A}$-a.s. finite stopping time with respect to filtration $\{\cF_{\tau_i} \, :\, i \in \bN_0\}$, which implies that $\{ z \geq k \} \in \cF_{\tau_{k-1}}$ for all $k \in \bN$, and since $\tau_{k-1} \geq \nu$ for all $k \in \bN$, by the law of iterated expectation we get
	\begin{equation}
		\begin{aligned}
			\Exp_{A}\left[S^{p}(\tau_{k}) \, \mathbf{1}\{ z \geq k \} \, \big{|} \, \cF_{\nu} \right] =\Exp_{A} \left[ \Exp_{A} \left[S^{p}(\tau_{k})\, \big{|} \,  \cF_{\tau_{k-1}} \right]\, \mathbf{1}\{ z \geq k \}  \, \big{|} \, \cF_{\nu} \right].
		\end{aligned}
	\end{equation}
	In part (iii), we proved that there is a constant $C_0 > 0$ such that for any $k \in \bN$,
	\begin{equation*}
	\Exp_{A} \left[S^{p}(\tau_{k}) \, \big{|}\,  \cF_{\tau_{k-1}} \right] \leq C_{0},
	\end{equation*}
	which implies that
	\begin{equation*}
	\Exp_{A} \left[ \sum_{k=1}^{z} S^{p}(\tau_{k}) \, \big{|} \, \cF_{\nu} \right] \leq C_{0} \, \Exp_{A}[z \, |\, \cF_{\nu}]\leq  C_{0} \, \Exp_{A}[\widetilde{\nu} - \nu \, |\, \cF_{\nu}],
	\end{equation*}
	and the claim \eqref{cll1} follows by part (i).\\
\end{IEEEproof}

In Lemma \ref{bov}, we obtain upper bounds for the moments of growth of the maximum LLR during $[\nu,w)$, where $\nu,w \in \cT$ 
and $w \geq \nu$, i.e. $U(\nu,w)$ defined in \eqref{ups}. We also obtain upper bounds on moments of the maximum distance between any two LLRs at time $w$, i.e., $H(w)$ defined in \eqref{eta0}, in terms of moments of $w-\nu$ and of the corresponding maximum distance at time $\nu$, i.e., $H(\nu)$. For this we need to introduce the following notation, where $\nu,w \in \cT$, and $w \geq \nu$.

We denote by $\{s_i\,:\, i \in \bN_{0}\}$ with $s_{0}:=\nu$, the sequence of random times starting from $\nu$ at which a change in the maximum LLR occurs, i.e.,
\begin{equation}\label{ssu}
	s_{i+1}:=J(s_{i}), \quad i \in \bN_{0}, \quad \mbox{where}\quad s_{0}:= \nu,
\end{equation}
where $J$ is defined in \eqref{ttl}. By definition $\{s_i\,:\, i \in \bN_{0}\}$ is a sequence of stopping times with respect to $\{ \cF_{n} \, :\, n \in \bN \}$. Also, since $s_0 := \nu$ is $\Pro_{A}$-a.s. finite, by application of Lemma \ref{tu}(i) we show by induction that $s_i$ is $\Pro_{A}$-a.s. finite for each $i \in \bN_0$, and thus $\{s_i\,:\, i \in \bN_{0}\} \in \cT$. We also denote by $\eta(\nu,w)$ the number of times the maximum LLR changes during $[\nu,w]$, i.e.,
\begin{equation}\label{hht}
	\eta(\nu,w) := \max\{ i\in \bN_0\,:\, s_{i} \leq w\}.
\end{equation}
Since the maximum LLR changes only whenever there is an overshoot above the current maximum, we have 
\begin{equation}\label{uu}
	U(\nu,w) \leq \sum_{i=1}^{\eta(\nu,w)} L(s_{i}),
\end{equation}
where $L$ is defined in \eqref{llw}. We proceed to the statement of Lemma \ref{bov}.\\

\begin{lemma}\label{bov}
We fix $\nu, w \in \cT$ such that $w \geq \nu$.	Let $\widetilde{w}:=J(w)$, where $J$ is defined in \eqref{ttl}.
\begin{enumerate}[(i)]
		\item The number of times the maximum LLR changes during $[\nu,\widetilde{w}]$, i.e., $\eta(\nu,\widetilde{w})$, is a $\Pro_{A}$-a.s. finite stopping time with respect to $\{\cF_{s_i} \, :\, i \in \bN_0\}$. Also, for any $p \in [1, \mathfrak{p}/2)$ there is a constant $C >0$ such that
		\begin{equation}\label{eta}
			\Exp_{A}\left[\eta^{p}(\nu,\widetilde{w})\right] \leq C \left( 1+ \Exp_{A}\left[ (w - \nu)^{p} \right] \right). 
		\end{equation}
		
		\item For any $p \in [1, \mathfrak{p}-1]$ there is a constant $C > 0$ such that
		\begin{equation}
			\Exp_{A}\left[ \sum_{i=1}^{\eta(\nu,w)}L^{p}(s_{i})\right]\leq C\, \left(1+\Exp_{A}[w -\nu]\right).
		\end{equation}
		
		\item For any $p \in [2, \mathfrak{p}/2)$ there is a constant $C> 0$ such that
		\begin{equation}\label{ccl}
			\Exp_{A}\left[U^{p}(\nu,w)\right] \leq C \, \left(1+ \Exp_{A}\left[(w-\nu)^{p}\right]\right).
		\end{equation}
		
		\item For any $p \in (2,\mathfrak{p}-1]$ and $q \in [2, \mathfrak{p}/2)$ such that $q < p$ there is a constant $C>0$ such that
		\begin{equation}\label{ttp}
			\Exp_{A}\left[H^{q}(w)\right] \leq C \left( 1 + \Exp_{A}\left[H^{p}(\nu)\right] + \Exp_{A}\left[(w-\nu)^{q} \right] \right).
		\end{equation}
	\end{enumerate}
\end{lemma}

\begin{IEEEproof} 
	(i) Since $\eta(\nu,\widetilde{w}) \leq \widetilde{w} -\nu$, and $\widetilde{w} \in \cT$ by Lemma \ref{tu}(i), then $\eta(\nu,\widetilde{w})$ is $\Pro_{A}$-a.s. finite. Also, since $\widetilde{w}$, $\{s_{i}\,:\, i \in \bN_{0}\}$ are stopping times with respect to $\{\cF_{n}\,:\, n \in \bN_{0}\}$, and at time $\widetilde{w}$ there is a change in the maximum LLR, we have 
	\begin{equation}\label{ett}
		\{ \eta(\nu,\widetilde{w}) = i \}=\{ s_{i} = \widetilde{w} \} \in \cF_{s_{i}}, \quad \forall \; i \in \bN,
	\end{equation}
	which shows the first claim. For the second claim, we fix $p \in [1, \mathfrak{p}/2)$. By Jensen's inequality, we have
	\begin{equation}
		\Exp_{A}\left[\eta^{p}(\nu,\widetilde{w})\right] \leq 2^{p-1}\left(\Exp_{A}\left[(\widetilde{w}-w)^{p}\right] + 
		\Exp_{A}\left[(w - \nu)^{p}\right] \right),
	\end{equation}	
	and by Lemma \ref{tu}(i) there is a constant $C_0 > 0$ such that $\Exp_{A}\left[(\widetilde{w}-w)^{p}\right] \leq C_0$. Therefore, for $C:=2^{p-1}\max\{C_0,1\}$ we conclude the claim.\\
	
	(ii) We fix $p \in [1, \mathfrak{p}-1]$. Since $w \leq \widetilde{w}$, it also holds $\eta(\nu,w) \leq \eta(\nu,\widetilde{w})$ , and thus
	\begin{equation}\label{stinq}
	\sum_{i=1}^{\eta(\nu,w)}L^{p}(s_{i}) \leq \sum_{i=1}^{\eta(\nu,\widetilde{w})}L^{p}(s_{i}).
	\end{equation}
	Therefore, it suffices to show that there is a $C>0$, independent of $w$ and $\nu$, such that 
	\begin{equation*}
		\Exp_{A}\left[\sum_{i=1}^{\eta(\nu,\widetilde{w})}L^{p}(s_{i})\right] \leq C\, \left(1+\Exp_{A}[w -\nu]\right).
	\end{equation*}
	Indeed, we have
	\begin{equation}\label{aa1}
		\begin{aligned}
			\Exp_{A}\left[\sum_{i=1}^{\eta(\nu,\widetilde{w})}L^{p}(s_{i})\right] = \Exp_{A}\left[\sum_{i=1}^{\infty}L^{p}(s_{i}) \, \mathbf{1}\{\eta(\nu,\widetilde{w}) \geq i\}\right]
			=\sum_{i=1}^{\infty} \Exp_{A}\left[L^{p}(s_{i}) \, \mathbf{1}\{\eta(\nu,\widetilde{w}) \geq i\}\right],
		\end{aligned}
	\end{equation}	
	where the last equality follows by monotone convergence theorem. In part (i), we showed that $\eta(\nu,\widetilde{w})$ is a $\Pro_{A}$-a.s. finite stopping time with respect to $\{\cF_{s_i} \, :\, i \in \bN_0\}$, which implies that $\{ \eta(\nu,\widetilde{w}) \geq i \} \in \cF_{s_{i-1}}$ for all $i \in \bN$, and by the law of iterated expectation we have 
	\begin{equation}
		\Exp_{A}\left[L^{p}(s_{i}) \, \mathbf{1}\{\eta(\nu,\widetilde{w}) \geq i\}\right] = \Exp_{A}\left[ \Exp_{A}\left[ L^{p}(s_{i})|\cF_{s_{i-1}}\right] \, \mathbf{1}\{\eta(\nu,\widetilde{w}) \geq i\}\right].
	\end{equation}
	By Lemma \ref{tu}(iv), there is a constant $C_0 > 0$ such that
	\begin{equation}\label{lst}
		\sup_{i \in \bN} \Exp_{A}\left[ L^{p}(s_{i}) \, | \, \cF_{s_{i-1}}\right] \leq C_0,
	\end{equation}
	which implies that
	\begin{equation}\label{llst}
		\begin{aligned}
			\Exp_{A}\left[\sum_{i=1}^{\eta(\nu,\widetilde{w})}L^{p}(s_{i})\right]  & \leq C_0 \, \Exp_{A}\left[ \eta(\nu,\widetilde{w})\right] = C_0 \left( \Exp_{A}\left[\widetilde{w}-w\right] + \Exp_{A}\left[w - \nu\right] \right),
		\end{aligned}
	\end{equation}
	and by Lemma \ref{tu}(i) there is a constant $C_1 > 0$ such that $\Exp_{A}\left[ \widetilde{w}-w\right] \leq C_1$. Therefore, for $C:=\max\{C_0 C_1, C_0\}$ we conclude the claim.\\
	
	(iii) We fix $p \in [2, \mathfrak{p}/2)$. In view of \eqref{uu} and \eqref{stinq}, we deduce that
	\begin{equation}\label{mg}
		\begin{aligned}
			U(\nu,w) \leq \sum_{i=1}^{\eta(\nu,\widetilde{w})} L(s_{i})
			=\sum_{i=1}^{\eta(\nu,\widetilde{w})} \xi_{i} + \sum_{i=1}^{\eta(\nu,\widetilde{w})} \Exp_{A}[L(s_{i}) \, |\, \cF_{s_{i-1}}],
		\end{aligned}
	\end{equation}
	where
	\begin{equation*}
		\xi_{i}:=L(s_{i})-\Exp_{A}[L(s_{i}) \, |\,  \cF_{s_{i-1}}], \quad i \in \bN.
	\end{equation*}
	In order to prove \eqref{ccl}, by application of Jensen's inequality it follows that it suffices to show that there are $C_0,\, C_1 >0$, independent of $w$ and $\nu$ such that
	\begin{equation}\label{cc0}
		\Exp_{A}\left[\left(\sum_{i=1}^{\eta(\nu,\widetilde{w})} \Exp_{A}[L(s_{i})\,|\, \cF_{s_{i-1}}] \right)^{p}\,\,\right] \leq  C_{0}\,\Exp_{A}\left[\eta^{p}(\nu,\widetilde{w})\right],
	\end{equation}
	\begin{equation}\label{cc01}
		\Exp_{A}\left[\left(\sum_{i=1}^{\eta(\nu,\widetilde{w})} \xi_{i} \right)^{p}\,\,\right] \leq  C_{1}\,\Exp_{A}\left[ \eta^{p/2}(\nu,\widetilde{w}) \right],
	\end{equation}	
	then since $\eta(\nu,\widetilde{w}) \geq 1$ we further have
	\begin{equation*}
		\Exp_{A}\left[ \eta^{p/2}(\nu,\widetilde{w}) \right] \leq \Exp_{A}\left[\eta^{p}(\nu,\widetilde{w})\right],
	\end{equation*}
	and by part (i) we deduce the claim. 
	
	For inequality \eqref{cc0}, we observe that it follows directly by \eqref{lst} (with $p=1$). In order to show \eqref{cc01}, we note that $\{ \sum_{i=1}^{m} \xi_{i},\, m \in \bN \}$ is a zero-mean martingale with respect to $\{\cF_{s_m}\,:\, m \in \bN_0 \}$, and we recall that $\eta(\nu,\widetilde{w})$ is a $\Pro_{A}$-a.s. finite stopping time with respect to $\{\cF_{s_m} \, :\, m \in \bN_0\}$ by part (i). By application of Rosenthal's inequality \cite[Theorem 2.12]{Hall_Heyde} (for stopping times \cite[Theorem 1]{ren2003rosenthal}) it follows that there is a constant $C_2 > 0$ such that 
	\begin{equation}\label{ros_xi}
		\begin{aligned}
			\Exp_{A}\left[\left(\sum_{i=1}^{\eta(\nu, \widetilde{w})} \xi_{i} \right)^{p}\,\,\right] \leq \, C_{2}\, \Exp_{A}\left[\left(\sum_{i=1}^{\eta(\nu, \widetilde{w})} \Exp_{A}[\xi_{i}^{2} \, |\, \cF_{s_{i-1}}]\right)^{p/2}\,\,\right] + C_{2}\, \Exp_{A}\left[\sum_{i=1}^{\eta(\nu, \widetilde{w})} \xi_{i}^{p}\right].
		\end{aligned}
	\end{equation}
	By application of Jensen's inequality and \eqref{lst}, it follows that there is a constant $C'_{0} > 0$, that depends only on $p$, such that 
	\begin{equation*}
		\sup_{i \in \bN} \Exp_{A}\left[ \xi^{p}_{i} \,|\, \cF_{s_{i-1}}\right] \leq C'_0,
	\end{equation*}
	which implies that the first term on the right hand of \eqref{ros_xi} is bounded by $\Exp_{A}\left[\eta^{p/2}(\nu, \widetilde{w})\right]$ up to a multiplicative constant. By part (i) we have $\{ \eta(\nu,\widetilde{w}) \geq i \} \in \cF_{s_{i-1}}$, and thus by the law of iterated expectation it follows that
	\begin{equation}
		\begin{aligned}
			\Exp_{A}\left[\sum_{i=1}^{\eta(\nu, \widetilde{w})} \xi^{p}_{i} \,\,\right] = \sum_{i=1}^{\infty} \Exp_{A}\left[ \Exp_{A}\left[ \xi^{p}_{i} \,|\,\cF_{s_{i-1}}\right]\mathbf{1}\{\eta(\nu,\widetilde{w})\geq i\}  \right]
			\leq C'_{0} \;  \Exp_{A}\left[\eta(\nu,\widetilde{w})\right],
		\end{aligned}
	\end{equation} 
	and since $p/2 \geq 1$ and $\eta(\nu,\widetilde{w}) \geq 1$ we deduce \eqref{cc01}.\\
	
	(iv) We fix $p \in (2,\mathfrak{p}-1]$, $q \in [2, \mathfrak{p}/2)$ such that $q < p$. In view of definitions \eqref{eta0}-\eqref{ups}, the maximum relative distance of the LLRs at time $w$ is bounded by
	\begin{equation}
		H(w) \leq H(\nu)+ B(\nu) + U(\nu,w).
	\end{equation}
	By Jensen's inequality we have
	\begin{equation}\label{b_nq}
		\Exp_{A}\left[ H^{q}(w) \right] \leq 3^{q-1}\left( \Exp_{A}\left[H^{q}(\nu)\right] + \Exp_{A}\left[B^{q}(\nu)\right] + \Exp_{A}\left[U^{q}(\nu,w)\right] \right).
	\end{equation}
	For the first term on the right hand side of \eqref{b_nq}, since $q < p$ we obtain
	\begin{equation}\label{ltin}
		\Exp_{A}\left[ H^{q}(\nu) \right] \leq \Exp_{A}\left[ H^{p}(\nu)\right] + 1,
	\end{equation}
	where the last inequality follows by $x^{q} \leq x^p +1$ for any $x>0$. For the second term, since $w \in \cT$, by Lemma \ref{dd_l} there is a constant $C_0 > 0$ such that 
	\begin{equation}
		\Exp_{A}\left[B^{q}(\nu)\right] \leq C_0.
	\end{equation}
	For the third term, since $w,\, \nu \in \cT$, by part (iii) there is a constant $C_1 > 0$ such that
	\begin{equation}
		\Exp_{A}\left[U^{q}(\nu,w)\right] \leq C_1 \left( 1 + \Exp_{A}\left[(w-\nu)^q\right]\right).
	\end{equation}
	Adding up the upper bounds, we show the claim for $C:= 3^{q-1}(1+C_0 +C_1)$.\\ 
\end{IEEEproof}

In Lemma \ref{vth}, we show that the length of the interval $[\sigma_{l}, \sigma_{l+1})$, where the sequence $\{\sigma_{l}\,:\, l\in \bN_{0}\}$ is defined according to \eqref{cons}, is relatively ``short" in the sense that it has some moments finite, given that the maximum distance of the LLRs at the beginning of the interval, i.e., $H(\sigma_{l})$, is relatively ``small" in the sense that it has some moments finite. To this end, we obtain upper bounds on the moments of the subintervals defined by the consecutive times in \eqref{sig1}. For this, we introduce the following notation. We fix $\lambda \geq 2$, a stopping time $w \in \cT$, and for each $j \in \{1, \ldots, \lambda-1\}$ we set
\begin{equation}\label{wj}
w_j := F_j(w), \quad j \in \{1, \ldots, \lambda-1\},
\end{equation}
where $F_j$ is defined in \eqref{ff}, and $w_j$ is the first time after $w$ that at least one of the $\lambda-j$ smallest LLRs exceeds at least one of the sources with the $|D|- (\lambda-j)$ largest LLRs. Equivalently, $w_j$ is the first time after $w$ that the LLR of at least one source from $\cD \setminus \cD_{j}(w)$ overshoots the LLR of at least one source from $\cD_{j}(w)$. Since the definition of $\sigma_{l+1}$, given $\sigma_{l} \in \cT$, is the same for all $l \in \bN_0$, in order to study the moments of $\sigma_{l+1}-\sigma_{l}$, we fix $\lambda \geq 2$ and $\sigma_{l} \equiv \nu \in \cT$ and we set
\begin{equation}\label{gl}
\widehat{\nu}:=\cG_{\lambda}(\nu),
\end{equation}
where $\cG_{\lambda}$ is defined in \eqref{fl}, and $\sigma_{l+1} := \widehat{\nu}$. Therefore, $[\nu,\widehat{\nu})$ stands for a generic interval, and by  \eqref{sig1}
\begin{equation}\label{vst}
\widehat{\nu} - \nu = (\nu_1 -\nu) + \ldots +  (\nu_{i+1} - \nu_{i}) + \ldots + (\widehat{\nu}-\nu_{\lambda-1})
\end{equation}
where $\nu_1 :=J(\nu)$, $\nu_{i+1}:=F_{i}(\nu_i)$ for all $i \in \{1,\ldots,\lambda-1\}$, and by definition $\widehat{\nu}=\nu_{\lambda}$.\\

\begin{lemma}\label{vth}
We fix $\lambda \geq 2$, and the stopping times $w, \nu \in \cT$.

\begin{enumerate}[(i)]
 
\item There are $\epsilon, \delta >0$ independent of $w$ such that for all $j \in \{1,\ldots,\lambda-1\}$ it holds
	  \begin{equation}\label{ed}
	  	\frac{j-1 + \mathfrak{q} +\epsilon}{\sum_{i \in \mathcal{D}_{j}(w)} 1/I_i} < I^*(\cD) -3\delta. 
	  \end{equation}

\item There are $\epsilon, \delta >0$ independent of $w$ such that for all $j \in \{1,\ldots,\lambda-1\}$ and for all $m \in \bN$, it holds
	\begin{equation*}
	 \begin{aligned}
	 \{ w_j - w > m \} \subseteq &\left\{ \sum_{i \in \cD_j(w)} \pi_{i}(m:w) > j -1 +\mathfrak{q}+\epsilon, w_j - w > m\right\}\\
	 &\cup  \left\{ \exists\, i \in \cD_j(w) \,:\, \bar{\Lambda}_{i}(m:w)  > m \, \delta \right\}\\
	 &\cup \left\{ \bar{\Lambda}_{i}(m:w) \leq - m \, \delta, \; \forall \, i \in \cD \setminus \cD_j(w) \right\}\cup \left\{ H(w) \geq m \delta \right\},
	 \end{aligned}
	\end{equation*}
where $\bar{\Lambda}_{i}(m:w)$ is defined in \eqref{lnmb}, and $w_j$ in \eqref{wj}.
	
\item For any $p \in (1,\mathfrak{p}-1]$ and $q \in [1, \mathfrak{p}/2)$ such that $q < p$, there is a constant $C>0$ such that
	\begin{equation}\label{ttpp2}
		\Exp_{A}\left[(w_{j}-w)^{q}\right] \leq C \left(1 +  \Exp_{A}\left[H^{p}(w)\right]\right), \quad \forall \; j \in \{1,\ldots,\lambda-1\},
	\end{equation}
where $w_j$ is defined in \eqref{wj}, and as a result for $\{\nu_i \,:\, 1 \leq i \leq \lambda\}$ in \eqref{vst} there is a constant $C>0$ such that
\begin{equation}\label{ttvv3}
	\Exp_{A}\left[(\nu_{i+1}-\nu_{i})^{q}\right] \leq C \left(1 +  \Exp_{A}\left[H^{p}(\nu_i)\right]\right), \quad \forall \; i \in \{1,\ldots,\lambda-1\}.
\end{equation}

\item For any $p,\, q \in [2,\mathfrak{p}/2)$ such that $q < p$, there is a constant $C>0$ such that
\begin{equation}\label{tosh1}
	\Exp_{A}\left[(\widehat{\nu}-\nu)^{q} \right] \leq C \left(1 +  \Exp_{A}\left[H^{p}(\nu)\right]\right),
\end{equation} 
where $\widehat{\nu}$ is defined in \eqref{gl}.
\end{enumerate}	
\end{lemma}

\begin{IEEEproof}
(i) We fix 	$j \in \{1,\ldots,\lambda-1\}$. By requirement \eqref{frakD}, for the set $\cD$ it follows that
\begin{equation}
\lambda -1 + \mathfrak{q} < \sum_{ i \in \cD} \frac{I^*(\cD)}{ I_i}.
\end{equation}
Subtracting  $|\cD \setminus \mathcal{D}_{j}(w)|=\lambda - j$ from both sides, we have
\begin{equation*}
	\begin{aligned}
		j-1 + \mathfrak{q} &<  \sum_{ i \in \cD} \frac{I^*(\cD)}{ I_i} - |\cD \setminus \mathcal{D}_{j}(w)|\\
		&= \sum_{ i \in \mathcal{D}_{j}(w)} \frac{I^*(\cD)}{ I_i}+ \left( \sum_{ i \in \cD \setminus \mathcal{D}_{j}(w)} \frac{I^*(\cD)}{ I_i}  - |\cD \setminus \mathcal{D}_{j}(w)| \right) \leq \sum_{ i \in \mathcal{D}_{j}(w)} \frac{I^*(\cD)}{ I_i} ,
	\end{aligned}
\end{equation*}
where for the last inequality we use the fact that $ I^*(\cD) /I_i \leq 1$ for every $i \in \cD$. Therefore, 
\begin{equation*}
\frac{j-1 + \mathfrak{q}}{\sum_{ i \in \mathcal{D}_{j}(w)} 1/I_i} \leq \frac{j-1 + \mathfrak{q}}{\min\limits_{D} \sum_{ i \in D} 1/I_i} < I^*(\cD),
\end{equation*}	
where the minimum is considered over all $D \subseteq \cD$ such that $|D|=|\cD|-(\lambda -j)$, which proves the claim.\\
	
(ii) We fix $j \in \{1,\ldots,\lambda-1\}$, $m \in \bN$, and $\epsilon, \delta > 0$ that satisfy part (i). We have
\begin{equation*}
 \begin{aligned}
  \{ w_j - w > m \} =&  \left\{ \sum_{i \in \cD_j(w)} \pi_{i}(m:w) > j -1 +\mathfrak{q}+\epsilon, w_j - w > m\right\}\\
                    &\cup \left\{ \sum_{i \in \cD_j(w)} \pi_{i}(m:w) \leq j -1 +\mathfrak{q}+\epsilon, w_j - w > m\right\}\\
                     \subseteq & \left\{ \sum_{i \in \cD_j(w)} \pi_{i}(m:w) > j -1 +\mathfrak{q}+\epsilon, w_j - w > m\right\}\\
                    &\cup  \left( \left\{ \sum_{i \in \cD_j(w)} \pi_{i}(m:w) \leq j -1 +\mathfrak{q}+\epsilon, w_j - w > m\right\} \cap \left\{ \bar{\Lambda}_{i}(m:w)  \leq m \, \delta, \;  \forall \, i \in \cD_j(w) \right\} \right)\\
                    &\cup \left\{ \exists \, i \in \cD_j(w) \,:\, \bar{\Lambda}_{i}(m:w)  > m \, \delta \right\}.
 \end{aligned}
\end{equation*}
Therefore, in order to prove the claim it suffices to show that
\begin{equation*}
\begin{aligned}
&\left\{ \sum_{i \in \cD_j(w)} \pi_{i}(m:w) \leq j -1 +\mathfrak{q}+\epsilon, w_j - w > m\right\} \cap \left\{ \bar{\Lambda}_{i}(m:w)  \leq m \, \delta, \;  \forall \, i \in \cD_j(w) \right\}\\
& \; \subseteq  \left\{ \bar{\Lambda}_{i}(m:w) \leq - m \, \delta, \; \forall \, i \in \cD \setminus \cD_j(w) \right\} \cup \left\{ H(w) \geq m \delta \right\}.
\end{aligned}
\end{equation*}	
By definition of $F_j$ in \eqref{ff}, it follows that on the event $\{ w_j - w > m \}$, for all $u \in \cD \setminus \cD_j(w)$ it holds
\begin{equation}\label{ineqq}
	\Lambda_{u}(m:w) \leq \min_{i \in \cD_j(w)} \left\{ \Lambda_{i}(m:w) \right\} + H(w), 
\end{equation} 	
meaning that starting from time $w$, the increase in $\Lambda_{u}$ is smaller that the minimum increase of all $\Lambda_{i}$, where $i \in \cD_j(w)$, augmented by the maximum distance of all LLRs at time $w$, i.e. $H(w)$, otherwise we would have $\{ w_j - w \leq m \}$. Recalling the property \eqref{decompose} of $\bar{\Lambda}_{i}(m:w)$, by moving $H(w)$ to the left-hand side and dividing both sides by $m$, we have
\begin{equation*}
	\frac{\bar{\Lambda}_{u}(m:w) -H(w)}{m} + \pi_{u}(m:w)I_{u} \leq \min_{i \in \cD_j(w)} \left\{ \frac{\bar{\Lambda}_{i}(m:w)}{m} + \pi_{i}(m:w)I_{i} \right \}.
\end{equation*}	
On the event $\{ w_j - w > m \}$, at each instant during $[w,w+m]$ the ordering rule $\cR$ samples all sources in $\cD \setminus\cD_j(w)$ because they correspond to the $\lambda - j$ smallest LLRs, which implies that
\begin{equation}
\pi_{u}(m:w) = 1, \quad \forall \, u \in \cD \setminus\cD_j(w),
\end{equation}	
and since we intersect by the event $\left\{ \bar{\Lambda}_{i}(m:w)  \leq m \, \delta, \;  \forall \, i \in \cD_j(w) \right\}$, we deduce that for all $u \in \cD \setminus \cD_j(w)$ it holds
\begin{equation}
	\frac{\bar{\Lambda}_{u}(m:w) -H(w)}{m} + I_{u} \leq \delta + \min_{i \in \cD_j(w)} \left\{ \pi_{i}(m:w)I_{i} \right \}.
\end{equation}
Clearly,
\begin{equation*}
\min_{i \in \cD_j(w)}\left\{\pi_{i}(m:w)I_{i}  \right \} \leq \max_{v_i} \min_{i \in \cD_j(w)}\left\{v_i I_{i}  \right \},
\end{equation*}
where the maximum is considered over all $v_i \in [0,1]$, and since we intersect by the event $\{ \sum_{i \in \cD_j(w)} \pi_{i}(m:w) \leq j -1 +\mathfrak{q}+\epsilon\}$, we must further have
\begin{equation*}
\sum_{i \in \cD_j(w)} v_{i} \leq j -1 +\mathfrak{q}+\epsilon.
\end{equation*}
The solution of the constrainted max-min problem requires $v_i I_i = v_k I_k$ for all $i,\, k \in \cD_j(w)$, which implies that
\begin{equation}
	\max_{v_i} \min_{i \in \cD_j(w)}\left\{v_i I_{i} \right \} = \frac{j -1 +\mathfrak{q} + \epsilon}{ \sum_{i \in \cD_j(w)} 1/I_i} < I^*(\cD) -3\delta.
\end{equation}
where the inequality follows by part (i), and since $I^*(\cD) \leq I_{u}$ for all $u \in \cD \setminus \cD_j(w)$, we deduce that 
\begin{equation}
	\begin{aligned}
		\frac{\bar{\Lambda}_{u}(m:w) - H(w)}{m}  < -2 \delta, \quad \forall \, u \in \cD \setminus \cD_j(w).
	\end{aligned}
\end{equation}
Clearly,
\begin{equation*}
  \begin{aligned}
  &\left\{\frac{\bar{\Lambda}_{u}(m:w) - H(w)}{m}  < -2 \delta, \; \forall \, u \in \cD \setminus \cD_j(w)  \right\}\\
  &\subseteq \{\bar{\Lambda}_{u}(m:w) \leq -m \delta, \; \forall \, u \in \cD \setminus \cD_j(w) \} \cup \left\{ H(w) \geq m \delta \right\},
  \end{aligned}
\end{equation*}
which proves the claim.\\

(iii) We fix $p \in (1,\mathfrak{p}-1]$, $q \in [1, \mathfrak{p}/2)$ such that $q < p$, and $j \in \{1,\ldots,\lambda-1\}$. In order to show the claim it suffices to show that there is a constant $C>0$ such that
\begin{equation*}
\sum_{m=1}^{\infty} m^{q-1} \, \Pro\left( w_{j}-w > m \right) \leq  C \left(1 +  \Exp\left[H^{p}(w)\right]\right). 
\end{equation*}
By part (ii) and conditional Boole's inequality  we deduce that for every $m \in \bN$
\begin{equation}\label{blin}
\begin{aligned}
        \Pro_{A}\left( w_{j}-w >m \, |\,  \cF_{w}\right)
		\leq& \Pro_{A}\left( \sum_{i \in \cD_j(w)} \pi_{i}(m:w) > j -1 +\mathfrak{q}+\epsilon,\, w_{j}-w >m \, |\, \cF_{w}\right)\\
		&+	\sum_{i \in \cD_j(w)}  \Pro_{A}\left( \bar{\Lambda}_{i}(m:w) > \delta\, m \, |\, \cF_{w} \right) \\
		&+ \Pro_{A}\left( \bar{\Lambda}_{u}(m:w) \leq - m \, \delta, |\, \cF_{w}\right)+ \mathbf{1}\{ H(w) \geq \delta\, m\},
\end{aligned}
\end{equation}
where for the third term on the right-hand side we have fixed a $u \in \cD \setminus \cD_j(w)$, and for the forth term we applied the fact that $\{H(w) \geq \delta\, m\} \in \cF_{w}$. Therefore, it suffices to show that there are constants $C_1$, $C_2$, $C_3$ such that
\begin{equation}\label{st1}
	\sum_{m=1}^{\infty} m^{q-1} \, \Pro_{A}\left( \sum_{i \in \cD_j(w)} \pi_{i}(m:w) {>} j-1 +\mathfrak{q}+\epsilon, \, w_{j}-w >m \, | \, \cF_{w}\right) \leq C_{1},
\end{equation}
\begin{equation}\label{st2}
\sum_{m=1}^{\infty} m^{q-1} \Pro_{A}\left( \bar{\Lambda}_{i}(m:w) > \delta m \,| \, \cF_{w}\right) \leq C_{2}, \qquad \forall \, i \in \cD_j(w),
\end{equation}
\begin{equation}\label{st3}
	\sum_{m=1}^{\infty} m^{q-1} \, \Pro_{A}\left( \bar{\Lambda}_{u}(m:w)  \leq -m \, \delta  \,| \,  \cF_{w}\right) \leq C_{3}.
\end{equation}
Indeed, having those bounds we deduce that
\begin{equation}\label{tp0}
\Exp_{A}\left[ (w_j-w)^{q} | \cF_{w} \right] \leq C_1 + M C_2 + C_3 +  \sum_{m=1}^{\infty} m^{q-1} \, \mathbf{1}\{ H(w)\geq \delta \, m\},
\end{equation}
where $M$ is the total number of sources. Taking expectations on both sides, we have
	\begin{equation}
	\begin{aligned}
		\Exp_{A}\left[ (w_j-w)^{q}\right] &\leq C_1 + M C_2 + C_3 + \sum_{m=1}^{\infty} m^{q-1} \, \Pro_{A}\left(H(w)\geq \delta \, m\right)\\
		&\leq C_1 + M C_2 + C_3 + \frac{\Exp_{A}\left[ H^{p}(w)\right]}{\delta^p} \sum_{m=1}^{\infty} m^{(q-p)-1}, 
	\end{aligned}
\end{equation}
where for the first inequality we apply the monotone convergence theorem, and for the second  Markov's inequality. Since $p > q$, we conclude \eqref{ttpp2} for $C:=\max\{ C_1 + M C_2 + C_3, (1/\delta^p) \sum_{m=1}^{\infty} m^{(q-p)-1} \}$. We proceed to the proof of \eqref{st1}, \eqref{st2} and \eqref{st3}.

For \eqref{st1}, we note that on the event $\{ w_j-w >m \}$, at each time instant $n \in [w,w +m]$ the  ordering rule  $\cR$ samples all sources in  $\cD \setminus\cD_j(w)$, because they correspond to the $\lambda-j$ smallest LLRs in $\cD$, and also the $j-1+ \mathbf{1}\{ Z_n \leq \mathfrak{q}\}$ sources with the smallest LLRs in $\cD_j(w)$, and as a result
\begin{equation*}
	\sum_{i \in \cD_j(w)} \pi_{i}(m:w) = j-1 + \frac{1}{m} \sum_{n=0}^{m-1} \mathbf{1}\{Z_{n+w} \leq \mathfrak{q}\},
\end{equation*}
which implies
\begin{equation*}
	\begin{aligned}
		\Pro_{A}&\left( \sum_{i \in \cD_j(w)} \pi_{i}(m:w) > j -1 +\mathfrak{q}+\epsilon,\, w_j-w >m \,|\, \cF_{w}\right)\\
		&\leq \Pro_{A}\left( \frac{1}{m} \sum_{n=0}^{m-1} Z_{n+w} > \mathfrak{q}+\epsilon \,|\, \cF_{w} \right)\leq \Pro_{A}\left(\frac{1}{m} +\frac{1}{m-1} \sum_{n=1}^{m-1} Z_{n+w} > \mathfrak{q}+\epsilon \,|\, \cF_{w} \right)
	\end{aligned}
\end{equation*}
By \cite[Theorem 4.1.3]{durrett2010probability}, we know that that the sequence $\{Z_{n+w} \,:\, n \geq 1\}$ is iid, independent of $\cF_{w}$, and it has the same distribution as $\{Z_{n} \,:\, n \geq 1\}$. For all $m > 1/\epsilon \Leftrightarrow \epsilon > 1/m $, the right hand side is exponentially decaying by Chernoff bound, which proves \eqref{st1}.

For \eqref{st2}, we fix $i \in \cD_j(w)$. By Lemma \ref{polyn}, it follows that $\Pro_{A}\left( \bar{\Lambda}_{i}(m:w) > \delta m \, |\,  \cF_{w}\right)$ is uniformly $\mathfrak{p}/2$-polynomially decaying. Since, by assumption $q < \mathfrak{p}/2$, we conclude \eqref{st2}.

For \eqref{st3}, we note that by Lemma \ref{exp_tau} it follows that $\Pro_{A}\left( \bar{\Lambda}_{u}(m:w)  \leq -\delta m \, | \, \cF_{w} \right)$ is uniformly exponentially decaying, which proves \eqref{st3}.\\

(iv) We fix $p,\, q \in [2,\mathfrak{p}/2)$ such that $q < p$. Without loss of generality, we assume that $H(\nu) \in \cL^p$, otherwise the inequality holds trivially. For the purpose of the proof we consider the decreasing sequence $\{t_i:\, 1 \leq i \leq \lambda \}$ such that
\begin{equation}
	p=t_1 > \ldots > t_{\lambda}=q.
\end{equation}
In view of \eqref{vst}, in order to prove the claim, it suffices to show that for each $i \in \{ 1,\ldots,\lambda\}$ there is a constant  $C_{i} > 0$ such that 
\begin{equation}\label{ci}
\Exp_{A} \left[ (\nu_{i} - \nu_{i-1})^{t_{i}}  \right] \leq C_{i}\left( 1 + \Exp_{A}\left[ H^{p}(\nu)\right]   \right), \quad \forall \; i \in \{ 1,\ldots,\lambda\},
\end{equation}
where for simplicity we renamed $\nu \equiv \nu_0$. Indeed, if this is true then by Jensen’s inequality we have
\begin{align}\label{toz}
	\begin{split}
		\Exp_{A} \left[ ( \widehat{\nu} -\nu)^{q}\right] =
		\Exp_{A} \left[ \left(\sum_{i=1}^\lambda (\nu_i - \nu_{i-1} ) \right)^{q}\right] &\leq
		\lambda^{q-1} \, \sum_{i=1}^{\lambda} \Exp\left[ \left( \nu_i - \nu_{i-1}  \right)^{q} \right]\\
		  &\leq \lambda^{q-1} \, \sum_{i=1}^{\lambda}  \Exp_{A} \left[ \left( \nu_i - 
		\nu_{i-1}  \right)^{t_{i}} \right]\\
		& \leq  \left( \lambda^{q-1} \, \sum_{i=1}^{\lambda} C_{i} \right) \,	 \left( 1 + \Exp_{A} \left[ H^{p}(\nu)\right]   \right),
	\end{split}
\end{align}
where the second inequality follows by the fact that $\nu_i - \nu_{i-1} \geq 1$, and $t_i \geq q$ for all $i \in \{1,\ldots,\lambda\}$. In order to show \eqref{ci}, we apply induction on $i \in \{1,\ldots,\lambda\}$.

\textit{Basis of the induction}: For $i=0$, by definition $\nu_1:=J(\nu)$, and $t_1 = p\in [2,\mathfrak{p}/2)$. By Lemma \ref{tu}(i) it follows that there is a constant $C_1 >0$ such that  $\Exp_{A} \left[ (\nu_{1} - \nu)^{t_{1}}  \right] \leq C_{1}$, which implies \eqref{ci} for $i=0$.

\textit{Step of the induction}: We fix $k \leq \lambda$, we assume that \eqref{ci} is satisfied for all $i \in \{1,\dots,k-1\}$, and we show that it is also satisfied for $i=k$. Since $t_k, t_{k-1} \in [2,\mathfrak{p}/2)$ and  $t_k < t_{k-1}$, by \eqref{ttvv3} of part (iii) it follows that there is a constant $D_{0} > 0$ such that
\begin{equation}
\Exp_{A}\left[(\nu_{k}-\nu_{k-1})^{t_k}\right] \leq D_0 \left(1 +  \Exp_{A}\left[H^{t_{k-1}}(\nu_{k-1})\right]\right),
\end{equation}
and thus in order to show \eqref{ci} for $i=k$, it suffices to show that there is a constant $D_{1} > 0$ such that
\begin{equation}\label{x3}
	\Exp\left[ H^{t_{k-1}}(\nu_{k-1})\right] \leq D_{1} \left( 1 + \Exp\left[H^{p}(\nu)\right] \right).
\end{equation}   	
Since $\nu \in \cT$, by Lemma \ref{bov}(iv) we deduce that if also $\nu_{k-1} \in \cT$ then there is a constant $D_2 >0$ such that
\begin{equation*}
\Exp_{A}\left[  H^{t_{k-1}}(\nu_{k-1})\right] \leq D_2 \left( 1 + \Exp_{A}\left[H^{p}(\nu)\right] + \Exp_{A}\left[(\nu_{k-1}-\nu)^{t_{k-1}} \right] \right).
\end{equation*}
Therefore, in order to show \eqref{ci} for $i=k$, it suffices to show that there is a constant $D_{3} > 0$ such that 	
\begin{equation}\label{x0}
\Exp_{A}\left[(\nu_{k-1}-\nu)^{t_{k-1}}\right] \leq D_{3} \left(1 + \Exp_{A}\left[ H^{p}(\nu)\right]\right),
\end{equation}	
and since $\nu \in \cT$, and $H(\nu) \in \cL^{p}$, the \eqref{x0} will also imply that $\nu_{k-1} \in \cT$. Indeed, by the induction hypothesis that \eqref{ci} is satisfied for all $i \in \{1,\dots,k-1\}$, Jensen's inequality, and the fact that $t_i \geq t_{k-1}$ for all $i \leq k-1$, we deduce that
\begin{align*}
	\begin{split}
		\Exp_{A} \left[(\nu_{k-1} -\nu)^{t_{k-1}}\right] \leq (k-1)^{t_{k-1}-1} \, \sum_{i=1}^{k-1}  \Exp_{A} \left[ \left( \nu_i - 
		\nu_{i-1}  \right)^{t_{i}} \right]
		\leq  \left( (k-1)^{t_{k-1}-1} \, \sum_{i=1}^{k-1} C_{i} \right) \,	 \left( 1 + \Exp_{A} \left[ H^{p}(\nu)\right]   \right),
	\end{split}
\end{align*}
which proves \eqref{x0}.
\end{IEEEproof}

\end{document}